\documentclass[11pt,reqno]{amsart}

%%%%%%%%%%%%%%%%%%%%%%%%%%%%%%%%%%%%%%%%%%%%%%%%%%%%%%%%%%%%%%%%%%

\textwidth15.25cm
\oddsidemargin+0.7cm
\evensidemargin+0.7cm

%%%%%%%%%%%%%%%%%%%%%%%%%%%%%%%%%%%%%%%%%%%%%%%%%%%%%%%%%%%%%%%%%%
		
\usepackage[T1]{fontenc}							
\usepackage[english]{babel}		
\usepackage{csquotes}

\usepackage{cite}					

\usepackage{amssymb, amsmath, amsthm}
\usepackage{mathtools}				% for \coloneqq
\usepackage{enumitem}			
\usepackage{bm}							% for bold math

\usepackage{xcolor}				

%%%%%%%%%%%%%%%%%%%%%%%%%%%%%%%%%%%%%%%%%%%%%%%%%%%%%%%%%%%%%%%%%%

\newcommand{\N}{\ensuremath{\mathbb{N}}}

\newcommand{\R}{\ensuremath{\mathbb{R}}}

\newcommand{\rmL}{\mathrm L}
\newcommand{\rmH}{\mathrm H}
\newcommand{\rmW}{\mathrm W}
\newcommand{\rmC}{\mathrm C}
\newcommand{\BC}{\mathrm{BC}}
\newcommand{\BUC}{\mathrm{BUC}}
\newcommand{\BCw}{\mathrm{BC_w}}

\newcommand{\Grad}{\nabla}
\newcommand{\Lap}{\Delta}
\newcommand{\Div}{\mathrm{div}}

\newcommand{\ddt}{\frac{\mathrm d}{\mathrm dt}}

\newcommand{\delt}{\partial_t}
\newcommand{\delxj}{\partial_{x_j}}
\newcommand{\D}{\mathrm D}

\newcommand{\dx}{\;\mathrm dx}
\newcommand{\dsix}{\;\mathrm ds_x}
\newcommand{\dy}{\;\mathrm dy}
\newcommand{\dt}{\;\mathrm dt}

\newcommand{\dtx}{\;\mathrm d(t,x)}

\newcommand{\dtaux}{\;\mathrm d(\tau,x)}
\newcommand{\dtsix}{\;\mathrm d(t,s_x)}
\newcommand{\dtausix}{\;\mathrm d(\tau,s_x)}

\newcommand{\n}{\mathbf{n}}										% normal
\newcommand{\btau}{\bm{\tau}}									% tangential
\newcommand{\bnu}{\bm{\nu}}										% normal on V(t) in modeling

\newcommand{\loc}{\textnormal{loc}}
\newcommand{\uloc}{\textnormal{uloc}}
\newcommand{\Neum}{\textnormal{N}}						% index for spaces with Neumann boundary conditions
\newcommand{\tot}{\textnormal{tot}}
\newcommand{\free}{\textnormal{free}}
\newcommand{\kin}{\textnormal{kin}}

\newcommand{\vel}{\mathbf{v}}									% volume averaged velocity (AGG)
\newcommand{\uel}{\mathbf{u}}									% mass averaged velocity (LT)
\newcommand{\w}{\mathbf{w}}										% diffusion velocity
\newcommand{\J}{\mathbf{J}}
\newcommand{\Str}{\mathbf{S}}									% stress
\newcommand{\T}{\mathbf{T}}
\newcommand{\bpsi}{\bm{\psi}}
\newcommand{\z}{\mathbf{z}}	
\newcommand{\f}{\mathbf{f}}		

\newcommand{\mj}{m_{\textnormal{j}}}		
\newcommand{\mr}{m_{\textnormal{r}}}

\newcommand{\prt}[1]{( #1 )}										% parentheses
\newcommand{\bigprt}[1]{\big( #1 \big)}
\newcommand{\Bigprt}[1]{\Big( #1 \Big)}
\newcommand{\biggprt}[1]{\bigg( #1 \bigg)}
\newcommand{\abs}[1]{| #1 |}										% absolut value
\newcommand{\bigabs}[1]{\big| #1 \big|}
\newcommand{\Bigabs}[1]{\Big| #1 \Big|}
\newcommand{\biggabs}[1]{\bigg| #1 \bigg|}
\newcommand{\norm}[1]{\| #1 \|}										% norm
\newcommand{\bignorm}[1]{\big\| #1 \big\|}
\newcommand{\Bignorm}[1]{\Big\| #1 \Big\|}
\newcommand{\ang}[2]{ \langle #1 , #2  \rangle}			% angle

				% scalar product

\newcommand{\mean}[1]{ \langle #1 \rangle}				% mean

\newcommand{\ssubset}{\subset\joinrel\subset}		% compact embedding
\newcommand{\trans}[1]{ #1^{\mathrm T} }

\DeclareMathOperator{\dom}{dom}
\DeclareMathOperator{\supp}{supp}

%%%%%%%%%%%%%%%%%%%%%%%%%%%%%%%%%%%%%%%%%%%%%%%%%%%%%%%%%%%%%%%%%%

\newtheorem{theorem}{Theorem}[section]
\newtheorem{lemma}[theorem]{Lemma}
\newtheorem{proposition}[theorem]{Proposition}

\theoremstyle{definition}
\newtheorem{definition}[theorem]{Definition}

\newtheorem{remark}[theorem]{Remark}

\numberwithin{equation}{section}

%%%%%%%%%%%%%%%%%%%%%%%%%%%%%%%%%%%%%%%%%%%%%%%%%%%%%%%%%%%%%%%%%%

\newcommand{\owntag}[1]{\stepcounter{equation}\tag{\theequation$_{#1}$}}

%%%%%%%%%%%%%%%%%%%%%%%%%%%%%%%%%%%%%%%%%%%%%%%%%%%%%%%%%%%%%%%%%%

\usepackage[colorlinks = true, linkcolor = blue, citecolor=green]{hyperref}

%%%%%%%%%%%%%%%%%%%%%%%%%%%%%%%%%%%%%%%%%%%%%%%%%%%%%%%%%%%%%%%%%%

\begin{document}
	\title[Diffuse Interface Models for Two-Phase Flows with Phase Transition]{Diffuse Interface Models for Two-Phase Flows with Phase Transition: Modeling and Existence of Weak Solutions}
	\author{Helmut Abels}
	\author{Harald Garcke}	
	\author{Julia Wittmann}
	\address{Fakultät für Mathematik, Universität Regensburg \\ D--93040 Regensburg, Deutschland}
	\email{helmut.abels@ur.de}
	\email{harald.garcke@ur.de}
	\email{julia4.wittmann@ur.de}

	\begin{abstract}
		The flow of two macroscopically immiscible, viscous, incompressible fluids with unmatched densities is studied, where a transfer of mass between the constituents by phase transition is taken into account. To this end, two quasi-incompressible diffuse interface models with singular free energies are analyzed, differing primarily in their velocity averaging. Firstly, to generalize a model by Abels, Garcke, and Grün, a thermodynamically consistent system of Navier--Stokes/Cahn--Hilliard type with source terms is derived in a framework of continuum fluid dynamics, followed by a proof of existence of weak solutions to the latter. Secondly, the quasi-stationary version of a model by Aki, Dreyer, Giesselmann, and Kraus is investigated analytically, with existence of weak solutions being established for the resulting quasi-stationary Stokes system coupled to a Cahn--Hilliard equation with a source term.
		
		\medskip
		\noindent \textbf{Keywords:} two-phase flow, Navier--Stokes equations, Cahn--Hilliard equation, diffuse interface model, weak solutions
		
		\medskip
		\noindent \textbf{Mathematics Subject Classification (2020):} 
		35Q30, % Navier--Stokes equations
		35Q35, % PDEs in connection with fluid mechanics
		35D30, % Weak solutions to PDEs
		35G61 % Initial-boundary value problems for systems of nonlinear higher-order PDEs
		76D05, % Navier--Stokes equations for incompressible viscous fluids
		76D03, % Existence, uniqueness, and regularity theory for incompressible viscous fluids
		76T06 % Liquid-liquid two-component flows 
	\end{abstract}

	\maketitle

	%%%%%%%%%%%%%%%%%%%%%%%%%%%%%%%%%%%%%%%%%%%%%%%%%%%%%%%%%%%%%%%%%
	%%%%%%%%%%%%%%%%%%%%%%%%%%%%%%%%%%%%%%%%%%%%%%%%%%%%%%%%%%%%%%%%%
	%%%%%%%%%%%%%%%%%%%%%%%%%%%%%%%%%%%%%%%%%%%%%%%%%%%%%%%%%%%%%%%%%
	\section{Introduction} \label{sec:intro}
	%%%%%%%%%%%%%%%%%%%%%%%%%%%%%%%%%%%%%%%%%%%%%%%%%%%%%%%%%%%%%%%%%
	%%%%%%%%%%%%%%%%%%%%%%%%%%%%%%%%%%%%%%%%%%%%%%%%%%%%%%%%%%%%%%%%%
	%%%%%%%%%%%%%%%%%%%%%%%%%%%%%%%%%%%%%%%%%%%%%%%%%%%%%%%%%%%%%%%%%
	
	Modeling the behavior of two-phase flows plays a crucial role in continuum fluid dynamics, with widespread relevance across both engineering and natural sciences. The complex phenomena occurring in these processes are particularly significant due to their ability to incorporate local mass exchanges between components, often driven by phase transitions, like those arising from chemical reactions.
	
	More precisely, this work focuses on the dynamics of a binary isothermal mixture composed of viscous, incompressible Newtonian fluids with different densities. It takes into account that in these systems, phase transitions may cause mass transfers between the constituents. Therefore, in the derivation of models describing such flows, the individual mass balance equations for each component include non-trivial source terms, representing mass production rates.
	
	A key feature is the consideration of macroscopically immiscible fluids, for which we assume partial miscibility within a thin transition layer of positive width. This framework leads to diffuse interface models for describing the motion of the binary mixture. Typically, such models consist of the incompressible Navier--Stokes equations for an averaged velocity field, which governs the fluid motion, coupled to the Cahn--Hilliard equation to bring phase separation into play.

	\medskip
	\noindent\textbf{Background.}
	The basic diffuse interface model for two-phase flows of incompressible fluids by Hohenberg \& Halperin \cite{Hohenberg1977}, known as ``Model H'', relies heavily on the assumption that both components have identical constant densities (matched densities). Gurtin et al.~\cite{Gurtin1996} derived this model rigorously and verified its thermodynamic consistency by showing a local energy dissipation inequality.
	In the literature, several approaches to generalize this model to the case of unmatched densities are of significance.
	Lowengrub \& Truskinovsky \cite{LowengrubTruskinovsky1998} proposed a thermodynamically consistent model, characterized as quasi-incompressible in the sense that both fluids in the mixture are incompressible but their mass averaged velocity field is not divergence-free.
	In contrast, Ding et al.~\cite{Ding2007} derived a model based on a solenoidal volume averaged velocity, whose thermodynamic consistency, however, is not known. 
	A more complex model by Boyer \cite{Boyer2002} also involves a volume averaged velocity, resulting in a divergence-free velocity field. But, to the best of the authors' knowledge, neither global nor local energy inequalities have been established for this system either.
	Finally, Abels, Garcke~\&~Grün \cite{AbelsGarckeGruen2012} developed a thermodynamically consistent, incompressible model using the same velocity averaging.
	For further diffuse interface models for two-phase flows with unmatched densities, the reader may be referred to \cite{Heida2011, ShokrpourRoudbari2018}. In addition, ten Eikelder et al.~\cite{tenEikelder2023} presented a unified framework for various known Navier--Stokes/Cahn--Hilliard models, along with a comparison of these. 
	
	Concerning models that take phase transitions into account, we mention two works.
	Aki et al.~\cite{Aki2014} proposed a thermodynamically consistent, quasi-incompressible system with mass averaged velocity allowing for mass transfers. 
	Based on mixture theory, ten Eikelder et al.~\cite{tenEikelder2024} derived a multi-phase model with phase transitions which satisfies the second law of thermodynamics, and which contains a law of conservation of linear momentum for each constituent instead of one balance equation for an averaged velocity field. However, their system does not incorporate a Cahn--Hilliard or Allen--Cahn equation.
	
	From an analytical perspective, Model H has been thoroughly investigated in the literature, see \cite{Boyer1999, GiorginiMiranvilleTemam2019, Abels2009m} to name just a few works.
	Likewise, numerous results are available concerning the analysis of the model by Abels, Garcke \& Grün, e.g., existence of global weak solutions \cite{AbelsDepnerGarcke2012, AbelsDepnerGarcke2013}, well-posedness \cite{Weber2016, AbelsWeber2020, Giorgini2021, Giorgini2022} as well as global regularity and asymptotic stabilization of weak solutions \cite{AbelsGarckeGiorgini2023}. We further mention extensions of this model to the case with dynamic boundary conditions \cite{GalGrasselliWu2019, GalLvWu2024, GiorginiKnopf2023}, where a transfer of the total mass between bulk and surface is possible.
	
	In contrast, analyzing the model by Lowengrub \& Truskinovsky \cite{LowengrubTruskinovsky1998} seems more challenging due to the presence of the pressure in the equation for the chemical potential and a resulting stronger coupling between the Navier--Stokes and the Cahn--Hilliard systems. In \cite{Abels2009g}, existence of weak solutions was proven under the condition that the free energy density is regular and sufficiently ``steep'', with the exponent of its gradient term exceeding the spatial dimension. Strong well-posedness of this model locally in time was established in \cite{Abels2012}.
	
	An alternative (but physically less justified) way to describe two-phase flows of fluids undergoing phase transitions is to govern the phase field variable by an Allen--Cahn type equation instead of the Cahn--Hilliard equation. A thermodynamically consistent, compressible Navier--Stokes/Allen--Cahn model was proposed by Blesgen \cite{Blesgen1999}, with local well-posedness shown in \cite{Kotschote2012}. For an early analytical study of a two-dimensional incompressible system with matched densities, see \cite{GalGrasselli2010}. Moreover, Jiang at al.~\cite{JiangLiLiu2017} derived a thermodynamically consistent model for incompressible flows with different densities, and proved global existence of weak solutions in three space dimensions.
	We finally refer to \cite{GiorginiGrasselliWu2022} for results on global well-posedness for a mass-conserving Navier--Stokes/Allen--Cahn system.

	\medskip
	\noindent\textbf{The two models of interest.}
	Given a sufficiently smooth bounded domain $\Omega\subset\R^d$, where $d\in\{2,3\}$, with exterior unit normal vector field $\n$ on $\partial\Omega$, we write $Q=(0,\infty)\times\Omega$ for the space-time cylinder and $S=(0,\infty)\times\partial\Omega$ for its lateral surface. 
	
	\textit{Model I:} To generalize the model by Abels, Garcke, and Grün \cite{AbelsGarckeGruen2012} to the case including phase transitions, we derive the following quasi-incompressible, instationary system of inhomogeneous Navier--Stokes/Cahn--Hilliard type:
	\begin{equation}
		\label{eq:AGG}
		\arraycolsep=2pt
		\begin{array}{rclll}
			\partial_t (\rho\vel) + \nabla\cdot(\rho\vel\otimes\vel) + \nabla\cdot(\vel\otimes\tilde\J) \\[0.2ex]
			- \nabla\cdot \Str(\varphi,\D\vel) + \Grad\lambda &=& -\varphi\Grad\mu 
			&\text{ in $Q$}, \\[1ex]
			\nabla\cdot\vel &=& - \alpha \mr(\varphi) \bigprt{\mu+\alpha\lambda} 
			&\text{ in $Q$}, \\[1ex]
			\partial_t \varphi + \nabla\cdot (\varphi\vel) &=& \nabla\cdot \bigprt{\mj(\varphi) \Grad\mu} - \mr(\varphi) \bigprt{\mu+\alpha\lambda}
			&\text{ in $Q$}, \\[1ex]
			\mu &=& F'(\varphi) - \Lap\varphi 
			&\text{ in $Q$}, \\[1ex]
			\vel \vert_{\partial\Omega} &=& 0
			&\text{ on $S$}, \\[1ex]
			\partial_\n\varphi \vert_{\partial\Omega} = \partial_\n\mu \vert_{\partial\Omega} &=& 0
			&\text{ on $S$}, \\[1ex]
			(\vel,\varphi) \vert_{t=0} &=& (\vel_0,\varphi_0) 
			&\text{ in $\Omega$}.
		\end{array}
	\end{equation}
	Here, $\vel$ is the \textit{volume averaged} velocity of the mixture, while the phase field variable $\varphi$ acts as an order parameter given as difference of the volume fractions of both fluids, such that the pure constituents correspond to $\varphi=\pm1$. Additionally, $\lambda$ denotes the pressure and $\mu$ the chemical potential associated to $\varphi$. The homogeneous free energy density $F$ is a double-well potential specified below, with minima close to $\pm1$. Moreover, the viscous stress tensor $\Str$, the mass density $\rho$, and the total mass flux $\tilde{\J}$ relative to the velocity field are determined by the aforementioned variables via the relations
	\begin{equation}
		\label{eq:AGG_relations}
		\arraycolsep=2pt
		\begin{array}{rcl}
			\Str(\varphi, \D\vel) &=& 2\nu(\varphi) \D\vel + \eta(\varphi) \nabla\cdot\vel \mathbf I,\\[1ex]
			\rho(\varphi) &=& b_+ + b_- \varphi,\\[1ex]
			\tilde\J(\varphi,\Grad\mu) &=& -b_- \mj (\varphi) \Grad\mu,
		\end{array}
	\end{equation}
	where $\D\vel$ denotes the symmetrized gradient of $\vel$ and $\mathbf I$ the identity matrix in $\R^d$. Eventually,~$\mj(\varphi)>0$ represents a (non-degenerate) diffusion mobility and $\mr(\varphi)>0$ a transition mobility, whereas $\nu(\varphi)>0$ as well as~$\eta(\varphi)>0$ are viscosity coefficients.
	For constants related to the specific mass density~$\tilde\rho_\pm>0$ of the respective fluid, we use the abbreviations
	\begin{align*}
			b_\pm = \tfrac{\tilde\rho_+ \pm \tilde\rho_-}{2}, \qquad
			c_\pm = \tfrac{1}{\tilde\rho_+} \pm \tfrac{1}{\tilde\rho_-}, \qquad
			\alpha = \tfrac{c_-}{c_+}, \qquad
			\text{where } \tilde\rho_+ \neq \tilde\rho_-.
	\end{align*}
	Concerning the boundary conditions in \eqref{eq:AGG}, we prescribe a no-slip boundary condition for~$\vel$. Furthermore, the homogeneous Neumann boundary condition for $\varphi$ enforces a contact angle of $\pi/2$ between the diffuse interface and the domain boundary, while the corresponding condition for $\mu$ ensures the absence of mass flux over the boundary.
	
	The total energy of the system is given as sum of the kinetic energy and a free energy of Ginzburg--Landau type, i.e.,
	\begin{align*}
		E_\tot\prt{\vel,\varphi}
		= E_\kin(\vel) + E_\free(\varphi) 
		= \int_\Omega \rho \frac{\abs \vel^2}{2} \dx + \int_\Omega F(\varphi) + \frac{\abs{\Grad\varphi}^2}{2} \dx,
	\end{align*}
	where $F$ belongs to a class of singular free energy densities including the well-known logarithmic Flory--Huggins potential. This choice ensures that in the analysis, $\varphi$ stays in the physically meaningful interval $[-1,1]$.
	For any sufficiently smooth solution $(\vel,\lambda,\mu,\varphi)$ of~\eqref{eq:AGG}, the total energy dissipates over time according to the evolution law
	\begin{align*}
		&\ddt E_\tot\bigprt{\vel(t),\varphi(t)}
		= - \int_\Omega \Str(\varphi,\D\vel) : \D\vel + \mj(\varphi) \abs{\Grad\mu}^2 + \mr(\varphi) \bigprt{\mu+\alpha\lambda}^2 \dx.
	\end{align*}

	\textit{Model II:} On the other hand, we study a quasi-stationary version of the diffuse interface model suggested by Aki, Dreyer, Giesselmann, and Kraus \cite{Aki2014}. This quasi-stationary variant is relevant for scenarios involving small Reynolds numbers. From a modeling point of view, it differs from \eqref{eq:AGG} in the averaging of the velocity and in the interpretation of the chemical potential, resulting in the following quasi-incompressible, inhomogeneous quasi-stationary Stokes system coupled to the Cahn–Hilliard equation:
	\begin{equation}
		\label{eq:LT}
		%\left.
		\arraycolsep=2pt
		\begin{array}{rclll}
			- \nabla\cdot \Str(\varphi, \D\uel) + \beta\rho\Grad\lambda &=& -\varphi\Grad \omega
			&\text{ in $Q$}, \\[1ex]
			\partial_t \rho + \nabla\cdot(\rho\uel) &=& 0
			&\text{ in $Q$}, \\[1ex]
			\partial_t \varphi + \nabla\cdot (\varphi\uel) &=& c_+^2 \bigprt{\nabla\cdot \bigprt{\mj(\varphi)\Grad\omega} - \mr(\varphi) \omega}
			&\text{ in $Q$}, \\[1ex]
			\omega &=& F'(\varphi) - \Lap\varphi + \alpha\lambda
			&\text{ in $Q$}, \\[1ex]
			\n\cdot\uel \vert_{\partial\Omega}&=&0 
			&\text{ on $S$}, \\[1ex]
			(\Str\n)_{\btau} &=&-\gamma\uel_{\btau} \vert_{\partial\Omega} 
			&\text{ on $S$}, \\[1ex]
			\partial_\n\varphi \vert_{\partial\Omega} = \partial_\n \omega \vert_{\partial\Omega} &=&0 
			&\text{ on $S$}, \\[1ex]
			\varphi \vert_{t=0} &=& \varphi_0 
			&\text{ in $\Omega$}.
		\end{array}%\right\}
	\end{equation}
	In contrast to system \eqref{eq:AGG}, $\uel$ is defined as the \textit{mass averaged} velocity of the mixture, and we introduce $\omega$ as a modified chemical potential related to $\mu$ via $\omega = \mu+\alpha\lambda$. We emphasize that the different velocity averaging leads to a stronger coupling, since the pressure enters the equation for the chemical potential. Moreover, the continuity equation is listed in \eqref{eq:LT}, instead of the divergence equation, which can be reconstructed from the mass balance and the phase field equation. As previously, the relations
	\begin{equation}
		\label{eq:LT_relations}
		\arraycolsep=2pt
		\begin{array}{rcl}
			\Str(\varphi, \D\uel) &=& 2\nu(\varphi) \D\uel + \eta(\varphi) \nabla\cdot\uel \mathbf I,\\[1ex]
			\rho(\varphi) &=& b_+ + b_- \varphi
		\end{array}
	\end{equation}
	are satisfied, whereas the total mass flux relative to the velocity field does not appear explicitly in this system. The occurring constants associated with the specific mass densities are denoted by
	\begin{align*}
			b_\pm = \tfrac{\tilde\rho_+ \pm \tilde\rho_-}{2}, \qquad
			c_\pm = \tfrac{1}{\tilde\rho_+} \pm \tfrac{1}{\tilde\rho_-}, \qquad
			\alpha = \tfrac{c_-}{c_+}, \qquad
			\beta = \tfrac{2}{\tilde\rho_+ + \tilde\rho_-}, \qquad
			\text{where } \tilde\rho_+ \neq \tilde\rho_-.
	\end{align*}
	
	Instead of a no-slip boundary condition, we impose a Navier-slip boundary condition here, meaning that only the normal part of the velocity vanishes on the boundary, while the tangential part (denoted by an index $\btau$) is related to the tangential part of the normal stresses through a friction parameter $\gamma>0$.
	
	Since we work with a quasi-stationary model neglecting the kinetic energy, the total energy of the system only consists of the free energy
	\begin{align*}
		E_\free(\varphi)
		= \int_\Omega F(\varphi) + \frac{\abs{\Grad\varphi}^2}{2} \dx,
	\end{align*}
	with a singular homogeneous free energy density $F$ as above. Here, the energy dissipation is governed by the law
	\begin{align*}
		&\ddt E_{\free}\bigprt{\varphi(t)} 
		= - \int_\Omega \Str(\varphi, \D\uel) : \D\uel + \mj(\varphi) c_+^2 \abs{\Grad \omega}^2  
		+ \mr(\varphi) c_+^2 \omega^2 \dx
		- \int_{\partial\Omega} \gamma \abs{\uel_{\btau}}^2 \dsix
	\end{align*}
	for sufficiently smooth solutions $(\uel,\lambda,\omega,\varphi)$ to \eqref{eq:LT}, where $\mathrm ds_x$ indicates integration with respect to the surface measure on $\partial\Omega$. Of course, acceleration terms could also be considered in \eqref{eq:LT}, leading to a Navier--Stokes type momentum equation and to an additional kinetic energy term in the energy dissipation, see \cite{Aki2014}.

	\medskip
	\noindent\textbf{Main results.}
	Our first central contribution is the derivation of the thermodynamically consistent diffuse interface model \eqref{eq:AGG} for two-phase flows with phase transition. It describes the dynamics of an isothermal mixture of viscous, incompressible Newtonian fluids with unmatched densities that may exchange mass.
	
	Analytically, our main results are the following two theorems.
	
	\begin{theorem}
		\label{thm:AGG}
		Let $\vel_0\in \rmL^2(\Omega)^d$ and $\varphi_0\in \rmH^1(\Omega)$ with $\abs{\varphi_0}\leq1$ a.e.~in $\Omega$ and $\mean{\varphi_0} \in (-1,1)$. Under the assumptions \eqref{ass:domain}--\eqref{ass:coeffs} below, there exists a weak solution $(\vel,\lambda,\mu,\varphi)$ of \eqref{eq:AGG} in the sense of Definition \ref{AGG:exws:def:weak_sol}.
	\end{theorem}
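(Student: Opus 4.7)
The plan is to follow the classical approximation-and-compactness strategy for Navier--Stokes/Cahn--Hilliard systems with singular free energies, adapting the framework of Abels, Depner, and Garcke \cite{AbelsDepnerGarcke2012, AbelsDepnerGarcke2013} to the presence of source terms, non-divergence-free velocity, and the explicit coupling with the pressure $\lambda$ through the term $\mu + \alpha\lambda$. A useful preliminary step is to reformulate \eqref{eq:AGG} in terms of the auxiliary quantity $\omega := \mu + \alpha\lambda$, since this is the quantity appearing in both the divergence constraint and the Cahn--Hilliard source, so that the analysis can be organized around testing with $\omega$ (which mimics the dissipation identity) rather than with $\mu$ and $\lambda$ separately.

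The first step is to regularize the singular potential $F$ by a sequence of $C^2$ approximations $F_\varepsilon$ of polynomial growth whose derivatives are globally Lipschitz, so that $\varphi$ is a priori allowed to take values outside $[-1,1]$ at the approximate level. For each $\varepsilon>0$ I would construct an approximate solution either via an implicit-in-time discretization with step size $h>0$ or via a Galerkin ansatz in the velocity component coupled to the variational Cahn--Hilliard problem. At each approximate stage, the quasi-incompressibility constraint $\nabla\cdot\vel = -\alpha\mr(\varphi)\omega$ is built into a fixed-point formulation (e.g.\ Leray--Schauder) in which $\varphi$ is regarded as known for the fluid subproblem and vice versa. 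Thermodynamic consistency is preserved through the use of the conservative form $\nabla\cdot(\rho\vel\otimes\vel)+\nabla\cdot(\vel\otimes\tilde\J)$ combined with the identity $\partial_t\rho+\nabla\cdot(\rho\vel+\tilde\J)=0$, which gives the correct cancellation to yield the discrete analogue of the energy dissipation law when tested with $\vel$, $\omega$, and $\partial_t\varphi$.

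From this discrete dissipation identity I would extract uniform estimates on $\sqrt{\rho(\varphi)}\vel$ in $\rmL^\infty(\rmL^2)$, on $\D\vel$ in $\rmL^2(\rmL^2)$ (hence on $\vel$ in $\rmL^2(\rmH^1)$ via Korn combined with the no-slip boundary condition), on $\mj(\varphi)^{1/2}\Grad\mu$ and $\mr(\varphi)^{1/2}\omega$ in $\rmL^2(\rmL^2)$, on $\Grad\varphi$ in $\rmL^\infty(\rmL^2)$, and on $F_\varepsilon(\varphi)$ in $\rmL^\infty(\rmL^1)$. To close the bound on $\mu$ separately, I would test the chemical potential equation with $1$ to control $\mean{\mu}$ using the singular potential estimate (as in \cite{AbelsDepnerGarcke2013}), and then recover $\lambda$ from the momentum equation by a Bogovskii/De Rham type argument, using $\omega - \mu = \alpha\lambda$ to pass from the combined estimate on $\omega$ back to separate estimates on $\mu$ and $\lambda$ in some $\rmL^p(\rmL^q)$. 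Compactness for $\varphi$ follows from Aubin--Lions applied to $\partial_t\varphi$ in a dual space, yielding a.e.\ convergence, which together with convexity of the regularized potentials allows the passage $F_\varepsilon'(\varphi_\varepsilon)\rightharpoonup F'(\varphi)$ and simultaneously forces $|\varphi|\le 1$ a.e.\ in the limit.

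The main obstacle, as usual for models with unmatched densities, will be obtaining strong convergence of the velocity approximations needed to pass to the limit in the nonlinear terms $\nabla\cdot(\rho\vel\otimes\vel)$ and $\nabla\cdot(\vel\otimes\tilde\J)$, which is further complicated here by the fact that $\vel$ is not solenoidal and by the pressure gradient appearing as a source for the Cahn--Hilliard dynamics. I intend to handle this by adapting the variable-density compactness argument of \cite{AbelsDepnerGarcke2013}: one derives a bound on $\partial_t(\rho\vel)$ in a negative-order space by testing the momentum equation with smooth divergence-adapted test functions (that absorb the non-zero divergence through a Bogovskii correction involving $\alpha\mr(\varphi)\omega$), and then combines this with $\sqrt{\rho}\vel\in\rmL^\infty(\rmL^2)$ and $\rho\in\rmC(\rmL^p)$ (from the already-established strong convergence of $\varphi$) to deduce strong convergence of $\sqrt{\rho_\varepsilon}\vel_\varepsilon$ in $\rmL^2(\rmL^2)$. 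The remaining nonlinear terms, including the coupling $\varphi\Grad\mu$ in the momentum equation and the source $\mr(\varphi)\omega$ in the mass and phase field equations, are then identified in the limit by standard weak-strong convergence arguments, completing the construction of a weak solution in the sense of Definition \ref{AGG:exws:def:weak_sol}.
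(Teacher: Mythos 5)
Your overall architecture (implicit time discretization, Leray--Schauder fixed point for the discrete problem, discrete energy dissipation, Aubin--Lions compactness for $\varphi$, and the variable-density argument via a bound on $\partial_t(\rho\vel)$ in a negative-order space combined with $\int\rho^N|\vel^N|^2\to\int\rho|\vel|^2$ to upgrade to strong convergence of $\vel^N$) coincides with the paper's proof. Two of your refinements are unnecessary here and the paper shows why: since the dissipation controls $\mu+\alpha\lambda$ in $\rmL^2(\rmL^2)$ and $\mu$ is controlled in $\rmL^2(\rmH^1)$ through the mean-value estimate of Lemma~\ref{AGG:exws:lemma:estimates} (which uses conservation of $\mean{\varphi}$, Remark~\ref{AGG:exws:rmk:mean_phi_cons}), the pressure $\lambda$ is bounded in $\rmL^2(\rmL^2)$ by pure algebra, so no Bogovskii/De Rham reconstruction is needed, and consequently no divergence-corrected test functions are needed for the $\partial_t(\rho\vel)$ bound either --- one simply keeps the term $\lambda^N\nabla\cdot\bpsi$ in the estimate.

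The genuine divergence from the paper, and the place where your proposal has a gap, is the treatment of the singular potential. The paper never regularizes $F$: it works with the subgradient $\partial E$ of the convex part throughout, so that every discrete iterate satisfies $\varphi\in\mathcal D(\partial E)$ and hence $|\varphi|\le1$ a.e., and the identification $F_0'(\varphi^N)\rightharpoonup F_0'(\varphi)$ in the limit is done by maximal monotonicity rather than pointwise convergence. Your regularization by polynomial potentials $F_\varepsilon$ allows $\varphi_\varepsilon$ to leave $[-1,1]$, and then $\rho(\varphi_\varepsilon)=b_++b_-\varphi_\varepsilon$ can become negative. This is not cosmetic: the paper's Leray--Schauder estimate uses $\rho\ge0$ to discard the kinetic terms $\rho|\vel|^2$ and $\rho_k|\vel-\vel_k|^2$, and the energy itself loses its lower bound if $\rho$ changes sign. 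The natural fix --- truncating $\rho$ outside $[-1,1]$ --- breaks precisely the identity you invoke for the cancellation, namely $\partial_t\rho+\nabla\cdot(\rho\vel+\tilde\J)=0$, because that identity is derived by multiplying the phase field equation by $b_-$ and using the divergence constraint, and it fails for a truncated $\rho$. So your regularization route requires an additional argument (either a uniform-in-$\varepsilon$ bound keeping $\varphi_\varepsilon$ in a region where $\rho>0$, or a consistent simultaneous modification of $\rho$, $\tilde\J$ and the convective terms whose error vanishes in the limit) that you do not supply; as written, the discrete energy inequality and the fixed-point bound are not established.
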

	
	\begin{theorem}
		\label{thm:LT}
		Let $\varphi_0\in \rmH^1(\Omega)$ with $\abs{\varphi_0}\leq1$ a.e.~in $\Omega$ and $\mean{\varphi_0} \in (-1,1)$. Under the assumptions \eqref{ass:domain}--\eqref{ass:friction} below, there exists a weak solution $(\uel,\lambda,\omega,\varphi)$ of \eqref{eq:LT} in the sense of Definition~\ref{LT:exws:def:weak_sol}.
	\end{theorem}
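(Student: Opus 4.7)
The plan is to construct weak solutions by combining a time discretization with a regularization of the singular free energy, adapting the strategy used for the Lowengrub--Truskinovsky model in \cite{Abels2009g} and for the Abels--Garcke--Grün model in \cite{AbelsDepnerGarcke2013} to the present quasi-stationary setting with mass averaged velocity and Navier-slip boundary conditions. First, I would replace $F$ by a smooth convex-concave decomposition $F_\varepsilon = F_{1,\varepsilon} + F_2$ obtained by truncating $F''$ outside a compact subinterval of $(-1,1)$, so that all nonlinearities are defined on $\R$. Second, I would introduce a semi-implicit Euler scheme with step size $\tau>0$: given $\varphi^n$, seek $(\uel^{n+1},\lambda^{n+1},\omega^{n+1},\varphi^{n+1})$ solving the discretized Stokes system, the discretized phase-field equation, and the chemical-potential relation $\omega^{n+1} = F'_\varepsilon(\varphi^{n+1}) - \Lap\varphi^{n+1} + \alpha\lambda^{n+1}$, with the convective coefficient $\varphi$ and the variable coefficients $\nu,\eta,\mj,\mr,\rho$ evaluated at $\varphi^n$ so as to preserve the discrete energy dissipation.

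Existence at each time step would be obtained through a Leray--Schauder fixed-point argument: given $\lambda$, solve the Cahn--Hilliard part for $(\varphi,\omega)$ by monotone-operator techniques based on the convex-concave splitting, then plug the resulting $(\varphi,\omega)$ into the variable-density Stokes problem with Navier-slip to produce an updated pressure, and iterate. Testing the Stokes equation with $\uel^{n+1}$ and the phase equation with $c_+^2\omega^{n+1}$, and exploiting the divergence constraint
\[
\nabla\cdot\uel^{n+1} = \alpha c_+^2\bigprt{\nabla\cdot(\mj(\varphi^n)\Grad\omega^{n+1}) - \mr(\varphi^n)\omega^{n+1}}
\]
derived by combining the discrete continuity and phase equations, one obtains the discrete analogue of the continuous dissipation law. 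This yields uniform bounds on $\varphi$ in $\rmL^\infty(\rmH^1)$, on $\omega$ in $\rmL^2(\rmH^1)$, on $\uel$ in $\rmL^2(\rmH^1)$, on $\uel_{\btau}$ in $\rmL^2(\rmL^2(\partial\Omega))$, and on $F_\varepsilon(\varphi)$ in $\rmL^\infty(\rmL^1)$.

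For the passage to the limit $\tau,\varepsilon\to 0$, piecewise-constant and piecewise-linear time interpolants combined with Aubin--Lions compactness, based on the $\rmH^1$-control of $\varphi$ together with a fractional time-derivative bound extracted from the discrete phase equation, give strong convergence of $\varphi_{\tau,\varepsilon}$ in $\rmL^2(\rmL^2)$ and hence a.e.\ convergence. A standard monotonicity argument yields an $\rmL^2(\rmL^2)$ bound on $F'_\varepsilon(\varphi_{\tau,\varepsilon})$, which together with the a.e.\ convergence enforces $\abs{\varphi}\leq 1$ a.e.\ and identifies $F'(\varphi)$ as the weak limit. The variable coefficients $\nu(\varphi), \eta(\varphi), \mj(\varphi), \mr(\varphi), \rho(\varphi)$ converge strongly as well, so weak convergence suffices for the remaining linear terms, and the boundary dissipation $\int_{\partial\Omega}\gamma\abs{\uel_{\btau}}^2\dsix$ is recovered by weak lower semicontinuity.

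The main obstacle is the pressure--chemical-potential coupling through $\omega = F'(\varphi) - \Lap\varphi + \alpha\lambda$, which is the hallmark of Lowengrub--Truskinovsky-type models and is absent in Model I. A separate a priori estimate for $\lambda$ is indispensable to pass to the limit both in the cross-term $\alpha\lambda$ inside the definition of $\omega$ and in the term $\beta\rho\Grad\lambda$ of the Stokes equation. The plan is to derive such a bound by testing the Stokes equation against a velocity field $\w$ solving $\nabla\cdot(\beta\rho\w) = \lambda - \mean{\lambda}$, constructed by a Bogovski\u{\i}-type right inverse of the weighted divergence operator compatible with the Navier-slip boundary condition; the constant $\mean{\lambda}$ is then determined from the solvability condition $\int_\Omega\nabla\cdot\uel\dx = 0$, which via the divergence constraint reduces to a linear equation in $\mean{\lambda}$ that can be solved using the $\rmL^1$-control of $F'_\varepsilon(\varphi)$ provided uniformly in $\varepsilon$ by the singular structure of $F$. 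Making these estimates quantitative enough to pass to the limit while preserving the variable-density structure and the Navier-slip boundary behavior is the most delicate point of the argument.
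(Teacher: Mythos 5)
Your overall skeleton (implicit time discretization, Leray--Schauder at each step, energy dissipation estimates, Aubin--Lions compactness for $\varphi$, and a separate duality argument for the pressure) matches the paper's strategy, and you correctly identify the pressure--chemical-potential coupling as the central obstacle. However, two of your key quantitative claims do not hold as stated, and they are exactly the points where the paper has to work differently.

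First, the claimed $\rmL^2(\rmL^2)$ bound on $F_\varepsilon'(\varphi)$ is not attainable. Since $\omega = F'(\varphi)-\Lap\varphi+\alpha\lambda$, any estimate on $F'(\varphi)$ inherits the spatial integrability of $\lambda$. Testing the relation with $F'(\varphi)$ produces the term $\alpha\int_\Omega\lambda\,F'(\varphi)\dx$, which cannot be absorbed into $\norm{F'(\varphi)}_{\rmL^2(\Omega)}^2$ unless $\lambda\in\rmL^2(\Omega)$ uniformly --- and no such bound survives the limit $h\to0$ (the paper only obtains $\lambda_0\in\rmL^2(0,\infty;\rmL^r(\Omega))$ with $r<\tfrac{d}{d-1}<2$). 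The paper's Lemma~\ref{LT:exws:lemma:estimates} correspondingly delivers only $F_0'(\varphi)\in\rmL^r(\Omega)$ and $\varphi\in\rmW^{2,r}(\Omega)$; with merely $\rmL^r$-integrability and a.e.~convergence, the identification of the limit of $F'(\varphi^N)$ can no longer be done by a routine monotonicity argument, which is why the paper resorts to the Egorov-type measure-theoretic argument of \cite{AbelsBosiaGrasselli2014}. You should plan for $\rmL^r$ rather than $\rmL^2$ bounds throughout this part.

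Second, the weighted Bogovski\u{\i} duality $\nabla\cdot(\beta\rho\w)=\lambda-\mean\lambda$ fails for the same integrability reason. Writing $\w=(\beta\rho)^{-1}\mathbf z$ with $\nabla\cdot\mathbf z=\lambda-\mean\lambda$, the pairing of $\Str(\varphi,\D\uel)\in\rmL^2(\Omega)$ with $\Grad\w$ requires $\Grad((\beta\rho)^{-1})\otimes\mathbf z\in\rmL^2(\Omega)$; but the energy estimate only gives $\Grad\varphi\in\rmL^\infty(0,\infty;\rmL^2(\Omega))$, so this product lies merely in $\rmL^{3/2}(\Omega)$ (and improving $\Grad\varphi$ would require the $\rmW^{2,r}$ bound on $\varphi$, which itself presupposes the pressure bound --- a circularity). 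The paper avoids this by first dividing the momentum equation by $\rho$ (so the pressure appears as $\beta\Grad\lambda$ without the weight), testing with gradient fields $\Grad\phi$, $\phi\in\rmW^{2,r'}_\Neum(\Omega)$ with $r'>d$ so that $\Grad\phi\in\rmC^0(\overline\Omega)$ tolerates the $\rmL^1$-terms $\Str\,\Grad\rho^{-1}$, and invoking the very weak solution theory for the Neumann--Laplace operator (Proposition~\ref{prelim:prop:very_weak_sol}); this is what forces $r<\tfrac{d}{d-1}$. Finally, note that some device is still needed to control $\lambda_0$ \emph{inside} the discrete fixed-point argument, where the time-continuous duality estimate is not yet available: the paper adds a damping term $h\lambda_0$ to the discrete continuity equation precisely for this purpose, and your proposal contains no substitute for it.
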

	
	Here, $\mean{\cdot}$ denotes the spatial mean of a function, and we refer to Subsection~\ref{subsec:notation} for further detailed notation.
	
	The proof of both theorems is based on an implicit time discretization, where solutions to the time-discrete system are established by a Leray--Schauder fixed-point argument, following the strategy of \cite{AbelsDepnerGarcke2012}. Passing to the limit in the corresponding time-continuous system primarily relies on \textit{a priori} estimates provided by an energy dissipation inequality. Regarding the non-linearities, proving an additional strong convergence of both the velocity and the phase field is required in the case of the instationary model \eqref{eq:AGG}, whereas in the quasi-stationary model \eqref{eq:LT}, only the phase field needs to converge strongly. Handling the pressure $\lambda$, or specifically its mean-free part $\lambda_0$, is more intricate in \eqref{eq:LT}. In order to obtain an \textit{a priori} estimate for $\lambda_0$ in $\rmL^2(\Omega)$, we introduce a damping term within the time-discrete system, similar to the approach in \cite{Abels2009g}. On the time-continuous level, we employ the momentum equation to bound $\lambda_0$ in a suitable space~$\rmL^2(0,\infty;\rmL^r(\Omega))$, where the Naver-slip boundary condition for the velocity comes into play. Finally, the limit passage in $F'(\varphi)$~differs as well: while we apply results for subgradients related to the free energy in \eqref{eq:AGG}, a more technical, measure theoretic argument is used in \eqref{eq:LT}.

	\medskip
	\noindent\textbf{Outline.}
	This work is structured as follows. We start in Section~\ref{sec:preliminaries} by introducing some notation, fixing general assumptions and collecting preliminary results of importance for our analysis. Section~\ref{sec:AGG_modeling} is dedicated to the derivation of the model \eqref{eq:AGG} for two-phase flows with phase transition using a volume averaged velocity. Existence of weak solutions to this system is established in Section~\ref{sec:AGG_exws}. This is followed by Section~\ref{sec:LT_reformulation}, where we reformulate the model with mass averaged velocity proposed by Aki et al.~\cite{Aki2014}. Finally, in Section~\ref{sec:LT_exws}, we prove existence of weak solutions to the quasi-stationary variant of the latter, which coincides with system \eqref{eq:LT}.

	%%%%%%%%%%%%%%%%%%%%%%%%%%%%%%%%%%%%%%%%%%%%%%%%%%%%%%%%%%%%%%%%%
	%%%%%%%%%%%%%%%%%%%%%%%%%%%%%%%%%%%%%%%%%%%%%%%%%%%%%%%%%%%%%%%%%
	%%%%%%%%%%%%%%%%%%%%%%%%%%%%%%%%%%%%%%%%%%%%%%%%%%%%%%%%%%%%%%%%%
	\section{Preliminaries} \label{sec:preliminaries}
	%%%%%%%%%%%%%%%%%%%%%%%%%%%%%%%%%%%%%%%%%%%%%%%%%%%%%%%%%%%%%%%%%
	%%%%%%%%%%%%%%%%%%%%%%%%%%%%%%%%%%%%%%%%%%%%%%%%%%%%%%%%%%%%%%%%%
	%%%%%%%%%%%%%%%%%%%%%%%%%%%%%%%%%%%%%%%%%%%%%%%%%%%%%%%%%%%%%%%%%
	
	\subsection{Notation} \label{subsec:notation} 
	
	We fix some notation which is supposed to hold throughout the entirety of the present work.
	
	\medskip
	\noindent\textbf{Notation for function spaces.}
	Let $U\subset\R^d$, $d\in\N$, be an open set. We adopt the standard notation~$\rmL^p(U)$, $p\in [1,\infty]$, to represent Lebesgue spaces, with their respective norms denoted by $\norm{\cdot}_{\rmL^p(U)}$. Accordingly, $\rmL^p(U;X)$ refers to Bochner spaces with values in a Banach space $X$. In the case $U = (a, b)$, we simplify the notation to $\rmL^p(a,b)$ and~$\rmL^p(a,b;X)$. Moreover, we write $f \in \rmL^p_\loc([0,\infty);X)$ if and only if $f \in \rmL^p(0,T;X)$ for every $T > 0$, whereas~$\rmL^p_\uloc([0, \infty); X)$ denotes the uniformly local variant of $\rmL^p(0, \infty; X)$ consisting of all strongly measurable $f \colon [0, \infty) \to X$ such that
	\begin{align*}
		\norm{f}_{\rmL^p_\uloc([0,\infty);X)} \coloneqq \sup_{t\geq0} \norm{f}_{\rmL^p(t,t+1;X)} < \infty.
	\end{align*}

	For a domain $U\subset\R^d$, we denote Sobolev spaces by $\rmW^{k,p}(U)$, $k\in\N_0$, $p\in [1,\infty]$. Furthermore, $\rmW^{k,p}_0(U)$ represents the closure of $\rmC_0^\infty(U)$ in $\rmW^{k,p}(U)$, and $\rmW^{-k,p}(U) = (\rmW^{k,p'}_0(U))'$ refers to the corresponding dual space, where the dual exponent $p'\in [1,\infty]$ of $p$ is such that it holds~$\frac1p + \frac1{p'}=1$. In addition, we introduce the $\rmL^2$-Bessel potential spaces $\rmH^s(U)$, $s\in\R$. With a Banach space $X$ and $0 < T < \infty$,
	we write $f \in \rmW^{1,p}(0,T;X)$, $p\in[1,\infty)$, if and only if $f,\frac{\mathrm{d} f}{\mathrm{d} t} \in \rmL^p(0,T;X)$, where $\frac{\mathrm{d} f}{\mathrm{d} t}$~refers to the vector-valued distributional derivative of~$f$. Moreover, $\rmW^{1,p}_\uloc([0,\infty);X)$ is defined by replacing $\rmL^p(0,T;X)$ by $\rmL^p_\uloc([0,\infty);X)$. We further set $\rmH^1(0,T;X) = \rmW^{1,2}(0,T;X)$ and~$\rmH^1_\uloc([0,\infty);X) = \rmW^{1,2}_\uloc([0,\infty);X)$. 
	
	Now let $X$ be some Banach space and let $I = [0,T]$ with $0 < T < \infty$ or $I = [0,\infty)$ if~$T = \infty$. 
	Then $\BC(I;X)$ denotes the Banach space of all bounded and continuous functions~$f\colon I \to X$ endowed with the supremum norm, while we write $\BUC(I;X)$ for the subspace of all bounded and uniformly continuous functions. Additionally, we define $\BCw(I;X)$ as topological vector space of all bounded and weakly continuous functions $f\colon I \to X$. We finally denote by $\rmC_0^\infty(0,T;X)$ the vector space of all smooth functions~$f\colon (0,T) \to X$ with~$\supp f \ssubset (0,T)$.

	\medskip
	\noindent\textbf{Further notation.}
	The outer product of $\mathbf{a},\mathbf{b}\in\R^d$ is given by $\mathbf{a}\otimes\mathbf{b}=(a_ib_j)_{i,j=1}^d$.
	
	For any Banach space $X$ and its dual $X'$, the duality pairing between elements $x'\in X'$ and $x\in X$ is denoted by $\ang{x'}{x}_{X',X}$. If $X$ is a Hilbert space, we write $(\cdot,\cdot)_X$ for its inner product on $X$. 
	
	In the following, let $\Omega\subset\R^d$ be a sufficiently smooth bounded domain with exterior unit normal vector field $\n$. 
	The notation $\D\uel \coloneqq \frac12 \bigprt{ \Grad\uel + \trans{\Grad\uel} }$ represents the symmetrized gradient of a function $\uel\in \rmH^1(\Omega)^d$.
	We further write
	\begin{align*}
		\mean{f} \coloneqq \frac{1}{|\Omega|}\ang{f}{1}_{\rmH^1(\Omega)', \rmH^1(\Omega)} \quad\text{for } f\in \rmH^1(\Omega)'
	\end{align*}
	to denote the generalized spatial mean of $f$, where $|\Omega|$ is the $d$-dimensional Lebesgue measure of $\Omega$. With the standard identification $\rmL^2(\Omega) \subset \rmH^1(\Omega)'$, it holds that $\mean{f} = \frac{1}{|\Omega|} \int_\Omega f \dx$ if~$f\in \rmL^2(\Omega)$.
	Given $m\in\R$, we set 
	\begin{align*}
		\rmL^p_{(m)}(\Omega) \coloneqq \{u\in \rmL^p(\Omega) : \mean{u} = m \}, \quad p\in[1,\infty],
	\end{align*}
	as well as $\rmH^k_{(m)}(\Omega) \coloneqq \rmH^k(\Omega) \cap \rmL^2_{(m)}(\Omega)$, $k\in\N$.
	
	Finally, we indicate the space of all functions in $\rmW^{k,p}(\Omega)^d$ with vanishing normal part on the boundary by
	\begin{align*}
		\rmW^{k,p}_\n(\Omega) \coloneqq \{\uel\in \rmW^{k,p}(\Omega)^d : \n \cdot \uel \vert_{\partial\Omega} = 0 \}, 
		\quad k\in\N,\; p\in[1,\infty),
	\end{align*}
	and the space of all functions in $\rmW^{k,p}(\Omega)$ satisfying a homogeneous Neumann boundary condition by
	\begin{align}
		\label{notation:eq:WkpN}
		\rmW^{k,p}_\Neum(\Omega) \coloneqq \{u\in \rmW^{k,p}(\Omega) : \partial_\n u \vert_{\partial\Omega} = 0 \},
		\quad 2\leq k\in\N,\; p\in[1,\infty).
	\end{align}

	\subsection{General Assumptions.} 
	
	We make the following assumptions for the remainder of this section and for the analysis sections of this paper, which are Sections~\ref{sec:AGG_exws} and~\ref{sec:LT_exws}.
	
	\begin{enumerate}[itemsep=0.8ex, label=$(\mathbf{A \arabic*})$, ref = $\mathbf{A \arabic*}$]
		\item \label{ass:domain}
			The set $\Omega\subset\R^d$, $d\in\{2,3\}$, is a bounded domain with $\rmC^2$-boundary with exterior unit normal vector field $\n$ on $\partial\Omega$.
		\item \label{ass:pot}
		The potential $F\colon [-1,1]\to\R$ is of regularity $F\in \rmC([-1,1])\cap \rmC^2((-1,1))$ and satisfies
		\begin{align*}
			\lim_{s\to-1} F'(s)=-\infty,
			\quad \lim_{s\to1} F'(s)=\infty,
			\quad F''(s) \geq -\kappa \text{ for some } \kappa\in\R.
		\end{align*}
		We denote the convex part of $F$ by $F_0(s)\coloneqq F(s)+\frac\kappa2s^2$.
		\item \label{ass:coeffs}
		For the mobility and viscosity coefficients $\mj \in \rmC^1(\R)$ and $\mr,\nu,\eta\in \rmC^0(\R)$, there exist constants $0<K_*<K^*$ such that
		\begin{align*}
			K_*\leq \mj(s), \mr(s), \nu(s), \eta(s) \leq K^* \quad\text{ for all } s\in\R.
		\end{align*}
		\item \label{ass:friction}
		The friction parameter $\gamma$ is positive.
	\end{enumerate}

	\begin{remark}
		With regard to these assumptions, we point out the following.
		\begin{enumerate}[label=(\roman*)]
			\item A homogeneous free energy density $F$ satisfying \eqref{ass:pot} is for instance given by the continuous extension to the interval $[-1,1]$ of the logarithmic potential
			\begin{align*}
				F(s) := \frac{\theta}{2} \bigprt{ (1+s)\ln(1+s)+(1-s)\ln(1-s) } + \frac{\theta_\mathrm{c}}{2} \bigprt{1-s^2}
			\end{align*}
			for all $s\in (-1,1)$ with some constants $0<\theta<\theta_\mathrm{c}$.
			\item By an approximation argument it can be verified that all our results remain valid in the case $\mj\in \rmC^0(\R)$.
		\end{enumerate}
	\end{remark}

	\subsection{Preliminary Results} 
	
	The preliminary results that are subsequently gathered are of relevance to our analysis.
	
	\medskip
	\noindent\textbf{Subgradients with prescribed mean value.} 
	First of all, we collect some useful results about subgradients related to the free energy from \cite{AbelsWilke2007}, Section~4. Given $m\in(-1,1)$, the convex part of the free energy, namely
	\begin{align*}
		E_m(u) =
		\begin{cases}
			\int_\Omega F_0(u) + \frac12 \abs{\Grad u}^2 \dx &\text{for } u\in\dom E_m, \\
			\infty &\text{else},
		\end{cases}
	\end{align*}
	with $F_0$ as in \eqref{ass:pot}, is defined on the domain $\dom E_m=\{u\in \rmH^1_{(m)}(\Omega) : \abs{u}\leq1 \text{ a.e.~in $\Omega$}\}$. With $\mathcal{P}$ being the power set, we denote by $\partial E_m \colon \rmL^2_{(m)}(\Omega) \to \mathcal{P} (\rmL^2_{(0)} (\Omega))$ the subgradient of~$E_m$ in the following sense. For $u\in\dom E_m$ and $v\in \rmL^2_{(0)}(\Omega)$, we have $v\in\partial E_m(u)$ if and only if
	\begin{align*}
		(v,u'-u)_{\rmL^2(\Omega)} \leq E_m(u') - E_m(u)
		\quad \text{for all } u'\in \rmL^2_{(m)}(\Omega).
	\end{align*}
	This generalized definition of subgradients is used since $\rmL^2_{(m)}(\Omega)$ is an \textit{affine} subspace of the Hilbert space $\rmL^2(\Omega)$ with tangent space $\rmL^2_{(0)}(\Omega)$.
	
	Identifying $\partial E_m$ with $\partial\tilde{E} \colon \rmL^2_{(0)}(\Omega) \to \mathcal{P} (\rmL^2_{(0)} (\Omega))$ by the shift $\tilde{E}(u)=E(u+m)$, we say $\partial E_m$ is maximal monotone if and only if $\partial \tilde{E}$ is maximal monotone. With $\tilde E$ and $E_0$ coinciding in the case $m=0$, and since $\partial E_0$ is a maximal monotone operator on $\rmL^2_{(0)}(\Omega)$, we obtain that $\partial E_m$ is a maximal monotone operator for arbitrary $m\in(-1,1)$, cf.~\cite{Abels2007}, Section~3.12.
	
	\begin{proposition}
		\label{prelim:prop:subgrad_mean}
		The domain of the subgradient of $E_m$ is given by
		\begin{align}
			\label{eq:subgrad_with:domain}
			\mathcal{D}(\partial E_m) 
			= \big\{u\in \rmH^2_{(m)}(\Omega) : F_0'(u)\in \rmL^2(\Omega),\, F_0''(u)\abs{\Grad u}^2 \in \rmL^1(\Omega),\, \partial_\n u\vert_{\partial\Omega}=0 \big\},
		\end{align}
		and for $u\in \mathcal{D}(\partial E_m)$, it holds 
		\begin{align}
			\label{eq:subgrad_with:subgradient}
			\partial E_m(u) = - \Lap u + P_0 (F_0'(u)),
		\end{align}
		where $P_0 \colon \rmL^2(\Omega)\to \rmL^2_{(0)}(\Omega)$, $P_0(f) \coloneqq f - \mean{f}$, is the orthogonal projection onto $\rmL^2_{(0)}(\Omega)$. Moreover, there exists a constant $C>0$ independent of $u\in \mathcal{D}(\partial E_m)$ such that
		\begin{align}
			\label{eq:subgrad_with:estimate_H2}
			\norm{u}_{\rmH^2(\Omega)}^2 + \norm{F_0'(u)}_{\rmL^2(\Omega)}^2 + \int_\Omega F_0''(u)\abs{\Grad u}^2\dx
			\leq C \Bigprt{\norm{\partial E_m(u)}_{\rmL^2(\Omega)}^2 + \norm{u}_{\rmL^2(\Omega)}^2 +1 }.
		\end{align}
		Finally, for every $r\in(1,2]$, there exists a constant $C_r>0$ such that
		\begin{align}
			\label{eq:subgrad_with:estimate_W2r}
			\norm{u}_{\rmW^{2,r}(\Omega)} + \norm{F_0'(u)}_{\rmL^r(\Omega)}
			\leq C_r \bigprt{\norm{\partial E_m(u)}_{\rmL^r(\Omega)} + \bigabs{\mean{F_0'(u)}} + \norm{u}_{\rmL^2(\Omega)} +1 }.
		\end{align}
	\end{proposition}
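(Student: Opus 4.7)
My plan is to proceed in three stages: first reduce to the mean-zero setting and verify the easy inclusion via convexity; then establish the reverse inclusion and identify the singular part by a Moreau--Yosida regularization of $F_0$; and finally derive the $\rmH^2$- and $\rmW^{2,r}$-estimates through a testing argument whose principal difficulty lies in controlling $\bigabs{\mean{F_0'(u)}}$ uniformly in the approximation.

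\emph{Reduction and forward inclusion.} Via the shift $\tilde{E}(u) = E_m(u+m)$, I would work on $\rmL^2_{(0)}(\Omega)$; as already noted in the excerpt, $\partial E_m$ is maximal monotone. A formal G\^ateaux derivative of $E_m$ at $u$ in a direction $\eta \in \rmH^1_{(0)}(\Omega)$ gives $\int_\Omega \bigprt{\Grad u \cdot \Grad\eta + F_0'(u)\eta}\dx$, and for $u$ in the candidate domain integration by parts is justified by $\partial_\n u\vert_{\partial\Omega}=0$, yielding $\int_\Omega \bigprt{-\Lap u + F_0'(u)}\eta\dx$; the mean-zero of $\eta$ then permits replacing $F_0'(u)$ by $P_0(F_0'(u))$. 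Convexity of $F_0$ combined with the inequality $\tfrac12\bignorm{\Grad u'}_{\rmL^2}^2 - \tfrac12\bignorm{\Grad u}_{\rmL^2}^2 \geq \int_\Omega \Grad u \cdot \Grad(u'-u)\dx$ then at once produces $-\Lap u + P_0(F_0'(u)) \in \partial E_m(u)$, which is the inclusion $\supset$ in \eqref{eq:subgrad_with:domain} together with one half of \eqref{eq:subgrad_with:subgradient}.

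\emph{Reverse inclusion.} I would next approximate $F_0$ by a family of globally $\rmC^2$ convex functions $F_0^k$ with $(F_0^k)'$ Lipschitz and $F_0^k \nearrow F_0$ pointwise, e.g.\ via the Moreau--Yosida regularization of $\partial F_0$. For each $k$ the analogous energy $E_m^k$ has the standard subgradient $-\Lap u + P_0\bigprt{(F_0^k)'(u)}$ defined on the smooth Neumann $\rmH^2_{(m)}$-class. Given $v\in \partial E_m(u)$, I would realize $u$ as a limit of resolvent iterates $u_k$ of $E_m^k$, which yields strong $\rmH^2$-solutions to $v_k = -\Lap u_k + P_0\bigprt{(F_0^k)'(u_k)}$ with $v_k \to v$ in $\rmL^2$. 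Combining the uniform estimates of the next step with a monotone closure argument (as in Abels--Wilke, Section~4) then lets one pass to the limit and identify $v = -\Lap u + P_0(F_0'(u))$, establishing the reverse inclusion and the domain characterization of \eqref{eq:subgrad_with:domain}.

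\emph{Quantitative estimates and main obstacle.} I would test the regularized equation against $(F_0^k)'(u_k) - \mean{(F_0^k)'(u_k)}$ and exploit $\partial_\n u_k\vert_{\partial\Omega}=0$ to obtain on the left the nonnegative term $\int_\Omega (F_0^k)''(u_k)\abs{\Grad u_k}^2\dx$ together with $\bignorm{(F_0^k)'(u_k) - \mean{(F_0^k)'(u_k)}}_{\rmL^2}^2$, while Cauchy--Schwarz bounds the right by $\bigprt{\norm{v_k}_{\rmL^2} + \bigabs{\mean{(F_0^k)'(u_k)}}} \cdot \norm{(F_0^k)'(u_k)}_{\rmL^2}$. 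The crux --- and the main obstacle of the proof --- is a uniform bound on $\bigabs{\mean{(F_0^k)'(u_k)}}$: this is precisely where the strict interior condition $\mean{u_k}=m\in(-1,1)$ enters, via the blow-up of $F_0'$ at $\pm 1$ and a standard measure-theoretic argument forcing $u_k$ to occupy a set of positive, $k$-uniform measure bounded away from both endpoints. One then concludes $\bigabs{\mean{(F_0^k)'(u_k)}} \leq C\bigprt{\norm{v_k}_{\rmL^2} + \norm{u_k}_{\rmL^2} + 1}$, which combined with Neumann $\rmL^2$-elliptic regularity for $-\Lap u_k = v_k - P_0((F_0^k)'(u_k))$ and a limit passage delivers \eqref{eq:subgrad_with:estimate_H2}. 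The $\rmL^r$-variant \eqref{eq:subgrad_with:estimate_W2r} follows by the same template with $\rmL^r$-Neumann regularity of the Laplacian, except that at this weaker exponent $\bigabs{\mean{F_0'(u)}}$ can no longer be absorbed into the other terms on the right-hand side, which is why it appears there explicitly.
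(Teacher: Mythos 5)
The paper does not prove this proposition itself: it refers to Theorem~4.3 of \cite{AbelsWilke2007} for \eqref{eq:subgrad_with:domain}--\eqref{eq:subgrad_with:estimate_H2} and to Theorem~3.12.8 of \cite{Abels2007} for \eqref{eq:subgrad_with:estimate_W2r}, and your sketch reconstructs essentially the same standard argument underlying those references --- convexity for the inclusion $\supset$, regularization of $F_0$ plus testing with $P_0(F_0'(u_k))$ and with $u_k-m$ for the reverse inclusion and the $\rmH^2$ bound, with $\bigabs{\mean{F_0'(u)}}$ controlled via $\mean{u}=m\in(-1,1)$ and the blow-up of $F_0'$ at $\pm1$ (the same pointwise inequality the paper later records as \eqref{AGG:exws:eq:est_F0'}). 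The only detail worth flagging is that for \eqref{eq:subgrad_with:estimate_W2r} with $r<2$ ``the same template'' is not literally testing with $F_0'(u_k)$ but with a truncation of $\abs{F_0'(u_k)}^{r-2}F_0'(u_k)$ before invoking $\rmL^r$ elliptic regularity; your outline glosses over this, but it is the standard modification and does not affect the correctness of the approach.
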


	For a proof of \eqref{eq:subgrad_with:domain}--\eqref{eq:subgrad_with:estimate_H2}, we refer to Theorem~4.3 in \cite{AbelsWilke2007}. Estimate~\eqref{eq:subgrad_with:estimate_W2r} is established in \cite{Abels2007}, Theorem 3.12.8, in the case without prescribed mean value, which causes the presence of the additional second term on the right-hand side of \eqref{eq:subgrad_with:estimate_W2r}.

	\medskip
	\noindent\textbf{Subgradients without prescribed mean value.} 
	A similar result as above holds for the subgradient of
	\begin{align*}
		E(u) =
		\begin{cases}
			\int_\Omega F_0(u) + \frac12 \abs{\Grad u}^2 \dx &\text{for } u\in\dom E, \\
			\infty &\text{else},
		\end{cases}
	\end{align*}
	with $F_0$ as in \eqref{ass:pot}, in the space $\rmL^2(\Omega)$ with $\dom E=\{u\in \rmH^1(\Omega) : \abs{u}\leq1 \text{ a.e.~in }\Omega\}$.
	
	\begin{proposition}
		\label{prelim:prop:subgrad}
		The domain of the subgradient of $E$ is given by
		\begin{align}
			\label{eq:subgrad_without:domain}
			\mathcal{D}(\partial E) 
			= \big\{u\in \rmH^2(\Omega) : F_0'(u)\in \rmL^2(\Omega),\, F_0''(u)\abs{\Grad u}^2 \in \rmL^1(\Omega),\, \partial_\n u\vert_{\partial\Omega}=0 \big\},
		\end{align}
		and for $u\in \mathcal{D}(\partial E)$, it holds 
		\begin{align}
			\label{eq:subgrad_without:subgradient}
			\partial E(u) = - \Lap u + F_0'(u).
		\end{align}
		Moreover, there exists a constant $C>0$ independent of $u\in \mathcal{D}(\partial E)$ such that
		\begin{align}
			\label{eq:subgrad_without:estimate_H2}
			\norm{u}_{\rmH^2(\Omega)}^2 + \norm{F_0'(u)}_{\rmL^2(\Omega)}^2 + \int_\Omega F_0''(u)\abs{\Grad u}^2\dx
			\leq C \Bigprt{\norm{\partial E(u)}_{\rmL^2(\Omega)}^2 + \norm{u}_{\rmL^2(\Omega)}^2 +1 }.
		\end{align}
		Finally, for every $p\in[2,\infty)$, there exists a constant $C_p>0$ such that
		\begin{align}
			\label{eq:subgrad_without:estimate_W2p}
			\norm{u}_{\rmW^{2,p}(\Omega)} + \norm{F_0'(u)}_{\rmL^p(\Omega)}
			\leq C_p \bigprt{\norm{\partial E(u)}_{\rmL^p(\Omega)} +1 }.
		\end{align}
	\end{proposition}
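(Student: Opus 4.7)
The plan is to adapt the arguments proving Proposition~\ref{prelim:prop:subgrad_mean} from \cite{AbelsWilke2007, Abels2007} to the unconstrained functional $E$, where the absence of a mean-value constraint allows us to drop the projection $P_0$ and the term $\abs{\mean{F_0'(u)}}$. First, decompose $E = E_1 + E_2$, where $E_1(u) \coloneqq \int_\Omega F_0(u)\dx$ has effective domain $\{u \in \rmL^2(\Omega) : \abs u \leq 1 \text{ a.e.~in }\Omega\}$ and $E_2(u) \coloneqq \tfrac12\norm{\Grad u}_{\rmL^2(\Omega)}^2$ has effective domain $\rmH^1(\Omega)$; both are proper, convex and lower semicontinuous on $\rmL^2(\Omega)$. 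Standard convex analysis identifies $\partial E_2 = -\Lap$ with Neumann boundary conditions on $\{u \in \rmH^2(\Omega) : \partial_\n u\vert_{\partial\Omega} = 0\}$, while $\partial E_1(u) = \{F_0'(u)\}$ whenever $F_0'(u) \in \rmL^2(\Omega)$. A qualification argument analogous to \cite[Theorem~4.3]{AbelsWilke2007} yields the sum rule $\partial E = \partial E_1 + \partial E_2$, which produces \eqref{eq:subgrad_without:subgradient} together with the inclusion ``$\supseteq$'' in \eqref{eq:subgrad_without:domain}.

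For the reverse inclusion and the $\rmH^2$-estimate \eqref{eq:subgrad_without:estimate_H2}, fix $u \in \mathcal{D}(\partial E)$ and set $f \coloneqq \partial E(u) \in \rmL^2(\Omega)$. After a truncation/regularization of $F_0'$ exploiting its monotonicity, one justifies the identity
\begin{align*}
\int_\Omega f\,F_0'(u)\dx = \int_\Omega F_0''(u)\abs{\Grad u}^2\dx + \norm{F_0'(u)}_{\rmL^2(\Omega)}^2,
\end{align*}
from which Cauchy--Schwarz and absorption give control of both terms on the right-hand side by $\norm{f}_{\rmL^2(\Omega)}$. With $F_0'(u) \in \rmL^2(\Omega)$ at hand, the equation $-\Lap u = f - F_0'(u)$ together with the boundary condition $\partial_\n u\vert_{\partial\Omega} = 0$ (obtained from the same qualification argument) and elliptic $\rmH^2$-regularity for the Neumann Laplacian on the $\rmC^2$-domain $\Omega$ yield $u \in \rmH^2(\Omega)$ and the bound \eqref{eq:subgrad_without:estimate_H2}.

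For \eqref{eq:subgrad_without:estimate_W2p} with $p \in [2,\infty)$, I would test the equation $f = -\Lap u + F_0'(u)$ against a truncation of $\abs{F_0'(u)}^{p-2} F_0'(u)$; integration by parts produces the non-negative dissipation
\begin{align*}
(p-1) \int_\Omega \abs{F_0'(u)}^{p-2} F_0''(u)\abs{\Grad u}^2 \dx + \norm{F_0'(u)}_{\rmL^p(\Omega)}^p \leq \norm{f}_{\rmL^p(\Omega)}\, \norm{F_0'(u)}_{\rmL^p(\Omega)}^{p-1},
\end{align*}
so that $\norm{F_0'(u)}_{\rmL^p(\Omega)} \leq \norm{f}_{\rmL^p(\Omega)}$ after passing to the limit in the truncation. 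Elliptic $\rmL^p$-regularity for the Neumann Laplacian then delivers $u \in \rmW^{2,p}(\Omega)$ with the claimed estimate. The absence of an $\abs{\mean{F_0'(u)}}$ contribution, in contrast to \eqref{eq:subgrad_with:estimate_W2r}, stems from $F_0'(u)$ not being projected onto $\rmL^2_{(0)}(\Omega)$ here; its mean is automatically controlled by $f$ through the equation itself. The main technical obstacle is the rigorous justification of testing the subgradient identity against the potentially singular quantities $F_0'(u)$ and $\abs{F_0'(u)}^{p-2} F_0'(u)$, which is carried out via a bounded monotone regularization of $F_0'$ and monotone convergence; this procedure, detailed in the mean-constrained case in \cite{AbelsWilke2007, Abels2007}, transfers to our setting with only cosmetic modifications.
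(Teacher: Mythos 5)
Your proposal is correct and follows essentially the same route as the paper, which simply defers to \cite{Abels2007} (Theorem~3.12.8 for \eqref{eq:subgrad_without:domain}--\eqref{eq:subgrad_without:estimate_H2} and a modification of Lemma~3.12.3 for \eqref{eq:subgrad_without:estimate_W2p}): the sum-rule identification of $\partial E$, the regularized testing against $F_0'(u)$ and $\abs{F_0'(u)}^{p-2}F_0'(u)$, and elliptic regularity for the Neumann Laplacian are exactly the arguments carried out there. The only cosmetic remark is that the ``$+1$'' in \eqref{eq:subgrad_without:estimate_W2p} absorbs $\norm{u}_{\rmL^p(\Omega)}\leq\abs{\Omega}^{1/p}$ coming from $\abs{u}\leq1$, which your sketch implicitly uses.
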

	
	The results \eqref{eq:subgrad_without:domain}--\eqref{eq:subgrad_without:estimate_H2} are found in \cite{Abels2007}, Theorem 3.12.8, whereas \eqref{eq:subgrad_without:estimate_W2p} follows from a slight modification of the proof of Lemma 3.12.3 in \cite{Abels2007} to the case without prescribed mean value.

	\medskip
	\noindent\textbf{Neumann--Laplace equation.}
	Following the outline in \cite{Abels2009g}, Section 4, we provide an overview of the Neumann--Laplace operator.
	To begin with, we note that $\rmH_{(0)}^1(\Omega)$, endowed with the inner product
	\begin{align*}
		(u,v)_{\rmH_{(0)}^1(\Omega)} \coloneqq (\Grad u,\Grad v)_{\rmL^2(\Omega)},
	\end{align*}
	is a Hilbert space due to Poincaré's inequality, with its dual denoted by $\rmH_{(0)}^{-1}(\Omega)$. Next, we define the weak divergence operator $\Div_\n \colon \rmL^2(\Omega)^d\to \rmH_{(0)}^{-1}(\Omega)$ by
	\begin{align*}
		\ang{\Div_\n \mathbf{u}}{\phi}_{\rmH_{(0)}^{-1}(\Omega), \rmH_{(0)}^1(\Omega)}
		= - (\mathbf{u},\Grad\phi)_{\rmL^2(\Omega)}
		\quad\text{for all } \phi\in \rmH_{(0)}^1(\Omega).
	\end{align*}
	Accordingly, the weak Neumann--Laplace operator $\Lap_\Neum \coloneqq \Div_\n\Grad \colon \rmH_{(0)}^1(\Omega) \to \rmH_{(0)}^{-1}(\Omega)$ is introduced, given by
	\begin{align*}
		\ang{\Lap_\Neum u}{\phi}_{\rmH_{(0)}^{-1}(\Omega), \rmH_{(0)}^1(\Omega)}
		= - (\Grad u,\Grad\phi)_{\rmL^2(\Omega)}
		\quad\text{for all } \phi\in \rmH_{(0)}^1(\Omega).
	\end{align*}
	In view of the Lax--Milgram theorem, this operator is an isomorphism, and for $f\in \rmH_{(0)}^{-1}(\Omega)$ and $u\in \rmH_{(0)}^1(\Omega)$ solving $\Lap_\Neum u=f$, it follows
	\begin{align*}
		\norm{u}_{\rmH_{(0)}^1(\Omega)} \leq \norm{f}_{\rmH_{(0)}^{-1}(\Omega)}.
	\end{align*}
	Furthermore, if $u\in \rmH_{(0)}^1(\Omega)$ satisfies the equation $\Lap_\Neum u=f$ for some $f\in \rmL^p_{(0)}(\Omega)$, $p\in(1,\infty)$, standard elliptic regularity theory implies that $u$ even belongs to $\rmW^{2,p}(\Omega)$, and that it holds~$\Lap_\Neum u=f$ a.e.~in $\Omega$ as well as $\partial_\n u\vert_{\partial\Omega}=0$ in the sense of traces. In this case, there additionally exists a constant $C_p>0$ such that
	\begin{align*}
		\norm{u}_{\rmW^{2,p}(\Omega)} \leq C_p \norm{f}_{\rmL^p(\Omega)}.
	\end{align*}
	In particular, $\Lap_\Neum\colon \rmW^{2,p}_\Neum(\Omega) \to \rmL^p(\Omega)$ is a bijective operator, where the notation from \eqref{notation:eq:WkpN} is used.
	
	Eventually, the following proposition based on a duality argument allows us to derive a concept of \textit{very weak solutions} to the Neumann--Laplace equation.
	
	\begin{proposition}
		\label{prelim:prop:very_weak_sol}
		Let $p\in (1,\infty)$. For every $f\in \bigprt{\rmW^{2,p'}_\Neum(\Omega)}'$, there is a unique $u\in \rmL^p(\Omega)$ satisfying
		\begin{align*}
			(u,\Lap\phi)_{\rmL^2(\Omega)}
			= \ang{f}{\phi}_{\bigprt{\rmW^{2,p'}_\Neum(\Omega)}', \rmW^{2,p'}_\Neum(\Omega)}
			\quad\text{for all } \phi\in \rmW^{2,p'}_\Neum(\Omega),
		\end{align*}
		which we refer to as \textnormal{very weak solution} to the Neumann--Laplace equation. Moreover, there exists a constant $C_p>0$ such that
		\begin{align}
			\label{eq:Neum-Lap:estimate}
			\norm{u}_{\rmL^p(\Omega)} \leq C_p \norm{f}_{\bigprt{\rmW^{2,p'}_\Neum(\Omega)}'}.
		\end{align}
	\end{proposition}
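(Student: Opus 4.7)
The plan is a standard duality argument that converts the bijectivity of the strong Neumann--Laplace operator recalled in the paragraph preceding the statement into very weak solvability in $\rmL^p(\Omega)$.

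First I would record two elementary observations. By the divergence theorem combined with the homogeneous Neumann condition, every $\phi \in \rmW^{2,p'}_\Neum(\Omega)$ satisfies $\int_\Omega \Lap\phi \dx = \int_{\partial\Omega} \partial_\n \phi \dsix = 0$, so the Laplacian maps $\rmW^{2,p'}_\Neum(\Omega)$ into $\rmL^{p'}_{(0)}(\Omega)$. In view of the bijectivity stated just above, its restriction to the mean-zero part is a topological isomorphism $\rmW^{2,p'}_\Neum(\Omega) \cap \rmL^{p'}_{(0)}(\Omega) \to \rmL^{p'}_{(0)}(\Omega)$ with continuous inverse $T$ obeying $T(\Lap\phi) = \phi - \mean{\phi}$ and $\norm{Tg}_{\rmW^{2,p'}(\Omega)} \leq C_{p'}\norm{g}_{\rmL^{p'}(\Omega)}$.

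Given $f \in (\rmW^{2,p'}_\Neum(\Omega))'$, I would next define the bounded linear functional $\ell \colon \rmL^{p'}_{(0)}(\Omega) \to \R$ by $\ell(g) \coloneqq \ang{f}{Tg}_{(\rmW^{2,p'}_\Neum)', \rmW^{2,p'}_\Neum}$, with $\abs{\ell(g)} \leq C_{p'}\norm{f}_{(\rmW^{2,p'}_\Neum)'}\norm{g}_{\rmL^{p'}(\Omega)}$. Identifying the dual of the closed subspace $\rmL^{p'}_{(0)}(\Omega) \subset \rmL^{p'}(\Omega)$ with $\rmL^p_{(0)}(\Omega)$ via Hahn--Banach extension and the standard $\rmL^{p'}$--$\rmL^p$ Riesz duality, I obtain a unique $u \in \rmL^p_{(0)}(\Omega)$ representing $\ell$, together with $\norm{u}_{\rmL^p(\Omega)} \leq C_p \norm{f}_{(\rmW^{2,p'}_\Neum)'}$, which is precisely \eqref{eq:Neum-Lap:estimate}.

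It then only remains to verify the weak equation and uniqueness. Plugging $g = \Lap\phi$ for arbitrary $\phi \in \rmW^{2,p'}_\Neum(\Omega)$ into the identity $\ell(g) = (u,g)_{\rmL^2(\Omega)}$ yields $(u,\Lap\phi)_{\rmL^2(\Omega)} = \ang{f}{\phi - \mean{\phi}}$, and since $\phi$ and $\phi - \mean{\phi}$ differ by a constant — against which $\Lap$ vanishes and which is absorbed into the mean-zero normalization of $u$ — this reproduces the desired very weak identity. Uniqueness follows from the surjectivity of $\Lap$ onto $\rmL^{p'}_{(0)}(\Omega)$: the difference of two candidates pairs to zero against all of $\rmL^{p'}_{(0)}(\Omega)$ and hence vanishes in $\rmL^p_{(0)}(\Omega)$. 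The only genuine subtlety, and it amounts to bookkeeping rather than a real obstacle, is the compatibility between $\ang{f}{1}$ and the mean-value indeterminacy of $u$ intrinsic to Neumann problems; this is handled transparently by consistently working with the mean-zero representative throughout.
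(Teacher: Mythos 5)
Your overall strategy --- a duality argument reducing very weak solvability in $\rmL^p(\Omega)$ to strong solvability of the Neumann problem in $\rmW^{2,p'}(\Omega)$ --- is the same as the paper's, which packages it as bijectivity of the adjoint $\Lap_\Neum'\colon \rmL^p(\Omega)\to\bigprt{\rmW^{2,p'}_\Neum(\Omega)}'$ and then reads off the identity and the estimate from $\Lap_\Neum' u=f$. You are in fact more careful than the paper on one point: you correctly observe that $\Lap$ maps $\rmW^{2,p'}_\Neum(\Omega)$ into $\rmL^{p'}_{(0)}(\Omega)$ and annihilates constants, so the isomorphism can only hold between the mean-zero subspaces; the paper's unqualified assertion that $\Lap_\Neum\colon\rmW^{2,p'}_\Neum(\Omega)\to\rmL^{p'}(\Omega)$ is bijective cannot be meant literally.

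However, your final step does not close, and the obstruction you brush aside is real. From $\ell(g)=(u,g)_{\rmL^2(\Omega)}$ with $g=\Lap\phi$ you obtain $(u,\Lap\phi)_{\rmL^2(\Omega)}=\ang{f}{\phi}-\mean{\phi}\,\ang{f}{1}$, and the extra term $\mean{\phi}\ang{f}{1}$ cannot be ``absorbed into the mean-zero normalization of $u$'': $u$ does not occur in it. It is a genuine compatibility condition. For instance, for $f=(\phi\mapsto\int_\Omega\phi\dx)$ no very weak solution exists at all, since testing the asserted identity with $\phi\equiv1$ forces $0=\abs{\Omega}$; likewise uniqueness in $\rmL^p(\Omega)$ fails because $u$ and $u+c$ satisfy the same identity for every constant $c$. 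The statement therefore needs the hypothesis $\ang{f}{1}=0$ together with the normalization $u\in\rmL^p_{(0)}(\Omega)$, and under these your construction is complete and correct. This is precisely the setting in which the proposition is applied in Section 6: the functional $f^N$ there only sees $\Grad\phi$ and $\Grad^2\phi$, hence vanishes on constants, and the solution sought is the mean-free pressure $\beta\lambda_0^N$. So the gap is inherited from the statement (and from the paper's own proof, via the unqualified bijectivity claim) rather than being peculiar to your route; but as written, your verification of the very weak identity and of uniqueness is not valid.
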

	
	\begin{proof}
		We recall that the operator $\Lap_\Neum\colon \rmW^{2,p'}_\Neum(\Omega) \to \rmL^{p'}(\Omega)$ is bijective, and so is its adjoint~$\Lap_\Neum'\colon \rmL^p(\Omega) \to \bigprt{\rmW^{2,p'}_\Neum(\Omega)}'$. Consequently, for every $f\in \bigprt{\rmW^{2,p'}_\Neum(\Omega)}'$, there exists a unique $u\in \rmL^p(\Omega)$ solving the equation $\Lap_\Neum'u=f$. By the definition of the adjoint, it follows
		\begin{align*}
			(u,\Lap\phi)_{\rmL^2(\Omega)}
			= (u,\Lap_\Neum\phi)_{\rmL^2(\Omega)}
			= \ang{\Lap_\Neum' u}{\phi}_{\bigprt{\rmW^{2,p'}_\Neum(\Omega)}', \rmW^{2,p'}_\Neum(\Omega)}
			= \ang{f}{\phi}_{\bigprt{\rmW^{2,p'}_\Neum(\Omega)}', \rmW^{2,p'}_\Neum(\Omega)}
		\end{align*}
		for all $\phi\in \rmW^{2,p'}_\Neum(\Omega)$. Finally, the continuity of $(\Lap_\Neum')^{-1}$ implies estimate~\eqref{eq:Neum-Lap:estimate}.
	\end{proof}

	%%%%%%%%%%%%%%%%%%%%%%%%%%%%%%%%%%%%%%%%%%%%%%%%%%%%%%%%%%%%%%%%%
	%%%%%%%%%%%%%%%%%%%%%%%%%%%%%%%%%%%%%%%%%%%%%%%%%%%%%%%%%%%%%%%%%
	%%%%%%%%%%%%%%%%%%%%%%%%%%%%%%%%%%%%%%%%%%%%%%%%%%%%%%%%%%%%%%%%%
	\section{Derivation of a Model with Volume Averaged Velocity} \label{sec:AGG_modeling}
	%%%%%%%%%%%%%%%%%%%%%%%%%%%%%%%%%%%%%%%%%%%%%%%%%%%%%%%%%%%%%%%%%
	%%%%%%%%%%%%%%%%%%%%%%%%%%%%%%%%%%%%%%%%%%%%%%%%%%%%%%%%%%%%%%%%%
	%%%%%%%%%%%%%%%%%%%%%%%%%%%%%%%%%%%%%%%%%%%%%%%%%%%%%%%%%%%%%%%%%
	
	In this section, we derive a diffuse interface model for two-phase flows with phase transitions in the framework of continuum fluid dynamics, which is formulated for incompressible, isothermal constituents with unmatched densities. Using a volume averaged velocity, we establish a generalization of the model by Abels, Garcke, and Grün \cite{AbelsGarckeGruen2012} to a quasi-incompressible system that explicitly accounts for mass exchanges between the components. The thermodynamic consistency of the resulting model is ensured by selecting constitutive assumptions in such a way that an energy dissipation inequality is satisfied.
	
	\medskip
	\noindent\textbf{Balance equations.}
	Let $\Omega\subset\R^d$, $d\in\N$, denote an open, bounded set representing the spatial region occupied by a binary fluid mixture whose flow is studied over the time interval~$[0,\infty)$.
	The indices $\pm$ are used to distinguish quantities associated with each component of the mixture, where the constituents may describe either two distinct substances or two phases of a single material. For these macroscopically immiscible fluids, we assume a partial miscibility in a thin interfacial region.
	It is sufficient to describe the thermodynamic state of the mixture by the following three quantities. The constituents are characterized by their respective mass density $\rho_\pm \colon [0,\infty)\times\Omega\to\R_+$ and velocity $\vel_\pm \colon [0,\infty)\times\Omega\to\R^d$~and, considering an isothermal mixture, by a common temperature $\theta\colon [0,\infty)\times\Omega\to\R_+$ which is assumed to be a given constant here.
	
	Each unmixed fluid is assigned a specific constant mass density $\tilde\rho_\pm >0$, where $\tilde\rho_+ \neq \tilde\rho_-$ is required. For the sum of the two volume fractions $\frac{\rho_\pm}{\tilde\rho_\pm}$, we assume vanishing excess volume, expressed as
	\begin{align}
		\label{AGG:modeling:eq:excess_vol}
		\frac{\rho_+}{\tilde\rho_+} + \frac{\rho_-}{\tilde\rho_-} = 1,
	\end{align}
	which indicates that the fluids behave like a simple mixture, cf.~\cite{LowengrubTruskinovsky1998}. 

	Since mass transfer between the components by phase transition is taken into account, the \textit{partial mass balance equation} for each constituent reads
	\begin{align}
		\label{AGG:modeling:eq:partial_mb}
		\partial_t \rho_\pm + \nabla\cdot (\rho_\pm\vel_\pm) = r_\pm,
	\end{align}
	with $r_\pm$ describing the mass production rate of the respective constituent. While individual masses of the components are not conserved, we require total mass conservation for the mixture in the sense that
	\begin{align}
		\label{AGG:modeling:eq:total_mass_cons}
		r_+ + r_- = 0,
	\end{align}
	where the total mass density $\rho\coloneqq \rho_+ + \rho_-$ of the mixture is given as sum of the partial mass densities~$\rho_\pm$. Notably, we work here with a \textit{volume averaged} velocity 
	\begin{align}
		\vel\coloneqq \frac{\rho_+}{\tilde\rho_+}\vel_+ + \frac{\rho_-}{\tilde\rho_-}\vel_-.
	\end{align}
	Moreover, for each constituent, we consider a diffusion velocity $\w_\pm \coloneqq \vel_\pm - \vel$ as well as a diffusion flux $\J_\pm \coloneqq \rho_\pm\w_\pm$. Summing the two partial mass balance equations~\eqref{AGG:modeling:eq:partial_mb}~corresponding to the indices $\pm$ and using \eqref{AGG:modeling:eq:total_mass_cons} reveals the \textit{total mass balance equation} 
	\begin{align}
		\label{AGG:modeling:eq:mb}
		\partial_t \rho + \nabla\cdot(\rho\vel + \tilde\J) = 0,
	\end{align}
	where $\tilde\J \coloneqq \J_+ + \J_-$ represents the total mass flux relative to the velocity field.
	Next, we define the phase field variable $\varphi\coloneqq \frac{\rho_+}{\tilde\rho_+} - \frac{\rho_-}{\tilde\rho_-}$ as difference of the volume fractions, along with the corresponding phase field flux $\J_\varphi \coloneqq \frac{1}{\tilde\rho_+} \J_+ - \frac{1}{\tilde\rho_-} \J_-$. In view of \eqref{AGG:modeling:eq:excess_vol}, these definitions yield the converse relations $\rho = b_+ + b_- \varphi$ and $\tilde\J = b_- \J_\varphi$, where $b_\pm \coloneqq \frac{\tilde\rho_+ \pm \tilde\rho_-}{2}$. Additionally, we set $c_\pm \coloneqq \frac{1}{\tilde\rho_+} \pm \frac{1}{\tilde\rho_-}$ for brevity. 
	
	Dividing the individual mass balance equation \eqref{AGG:modeling:eq:partial_mb} for $\pm$ by $\tilde\rho_\pm$, respectively, their difference, along with \eqref{AGG:modeling:eq:total_mass_cons}, gives the \textit{phase field equation}
	\begin{align}
		\label{AGG:modeling:eq:phase_field_eq}
		\partial_t \varphi + \nabla\cdot (\varphi\vel + \J_\varphi)
		= c_+r_+
		\eqqcolon r_\varphi
	\end{align}
	with the phase field production rate $r_\varphi$. Similarly, after dividing the partial mass balance equation \eqref{AGG:modeling:eq:partial_mb} for $\pm$ by $\tilde\rho_\pm$ and summing now, we incorporate \eqref{AGG:modeling:eq:excess_vol} and \eqref{AGG:modeling:eq:total_mass_cons} to obtain the \textit{divergence equation}
	\begin{align}
		\label{AGG:modeling:eq:div_eq}
		\nabla\cdot\vel = c_- r_+
		= \alpha r_\varphi,
	\end{align}
	where we abbreviate $\alpha \coloneqq \frac{c_-}{c_+}$.
	
	Finally, introducing a stress tensor $\tilde\T$, we assume conservation of linear momentum for the total mixture, expressed as
	\begin{align*}
		\partial_t (\rho\vel) + \nabla\cdot \bigprt{\rho\vel\otimes\vel - \tilde\T} = 0.
	\end{align*}
	Here, external forces are neglected.
	With an objective stress tensor $\tilde\Str \coloneqq \tilde\T + \vel\otimes\tilde\J$, this can be rewritten as the \textit{(linear) momentum balance equation}
	\begin{align}
		\label{AGG:modeling:eq:momentum_eq}
		\partial_t (\rho\vel) + \nabla\cdot \bigprt{\rho\vel\otimes\vel} + \nabla\cdot \bigprt{\vel\otimes\tilde\J}
		= \nabla\cdot \tilde\Str.
	\end{align}

	\medskip
	\noindent\textbf{Energy dissipation.}
	In our system, we require the total energy density to be of the form~$e(\vel,\varphi,\nabla\varphi)= f(\varphi,\nabla\varphi) + \rho \frac{\abs \vel^2}{2}$ consisting of a free energy density $f$ and the kinetic energy $\rho \frac{\abs \vel^2}{2}$. Along with this quantity, we consider some general energy flux $\J_e$ which will be specified later. For every volume $V(t)$ in $\Omega$ transported with the flow with velocity $\vel$~over time $t$, the total energy is supposed to dissipate such that
	\begin{align*}
		\ddt \int_{V(t)} e(\vel,\varphi,\nabla\varphi) \dx
		+ \int_{\partial V(t)} \J_e \cdot \bnu \dsix
		\leq 0,
	\end{align*}
	where $\bnu$ is the outer unit normal field on $\partial V(t)$.
	In view of the divergence theorem and the Reynolds transport theorem we obtain the corresponding \textit{energy dissipation inequality} in a local form 
	\begin{align}
		\label{AGG:modeling:eq:enery_diss}
		\partial_t e + \nabla\cdot (e\vel) + \nabla\cdot \J_e \leq 0
	\end{align}
	since $V(t)$ is arbitrary. In the remainder of this section, let $\dot\phi \coloneqq \delt \phi + \vel\cdot\nabla\phi$ denote the material time derivative of a function $\phi$ with respect to $\vel$, and $g_{,\phi} \coloneqq \frac{\partial}{\partial\phi} g$ the partial derivative of a function $g=g(\phi)$ with respect to $\phi$. Using the formula
	\begin{align*}
		\delxj\dot{\varphi}
		= \delt\delxj\varphi + \vel\cdot\Grad\delxj\varphi + \Grad\varphi\cdot\delxj\vel
		= (\delxj\varphi)^{\boldsymbol\cdot} + \Grad\varphi\cdot\delxj\vel
	\end{align*}
	we compute for the free energy density
	\begin{align*}
		\partial_t f + \nabla\cdot (f\vel) 
		&= \delt f + \vel\cdot\Grad f + f\nabla\cdot\vel \\
		&= f_{,\varphi} \big( \delt \varphi + \vel\cdot\nabla\varphi \big) 
		+ f_{,\Grad\varphi} \cdot \big( \delt \Grad\varphi 
		+ (\vel\cdot\nabla)\Grad\varphi \big ) 
		+ f\nabla\cdot\vel \\
		&= f_{,\varphi} \dot\varphi 
		+ f_{,\Grad\varphi} \cdot (\Grad\varphi)^{\boldsymbol\cdot}
		+ f\nabla\cdot\vel \\
		&= f_{,\varphi} \dot\varphi 
		+ f_{,\Grad\varphi} \cdot (\Grad\dot\varphi) - \big(\Grad\varphi \otimes f_{,\Grad\varphi} \big) : \Grad\vel
		+ f\nabla\cdot\vel \\
		&= \big( f_{,\varphi} - \nabla\cdot f_{,\Grad\varphi} \big) \dot\varphi 
		+ \nabla\cdot \big( f_{,\Grad\varphi} \dot\varphi \big) 
		- \big(\Grad\varphi \otimes f_{,\Grad\varphi} - f \mathbf I \big) : \Grad\vel.
	\end{align*}
	Invoking the total mass balance equation~\eqref{AGG:modeling:eq:mb} and the momentum equation~\eqref{AGG:modeling:eq:momentum_eq} the kinetic energy density term results in 
	\begin{align*}
		\delt \biggprt{ \rho\frac{\abs \vel^2}{2} } + \nabla\cdot \biggprt{ \rho\frac{\abs \vel^2}{2} \vel}
		&= \bigprt{\delt\rho + \nabla\cdot(\rho\vel)} \frac{\abs \vel^2}{2} 
		+ \rho \bigprt{\delt\vel + (\vel\cdot\nabla)\vel} \cdot \vel \\
		&= - \nabla\cdot \tilde\J \frac{\abs \vel^2}{2}
		+ \bigprt{\nabla\cdot \tilde\Str - (\tilde\J \cdot\nabla )\vel} \cdot \vel \\
		&= \nabla\cdot \biggprt{ -\frac{\abs \vel^2}{2} \tilde\J + \trans{\tilde\Str}\vel} - \tilde\Str : \Grad\vel.
	\end{align*}

	The phase field equation~\eqref{AGG:modeling:eq:phase_field_eq} and the divergence equation~\eqref{AGG:modeling:eq:div_eq} provide a framework for introducing Lagrange multipliers into the dissipation inequality~\eqref{AGG:modeling:eq:enery_diss}. Specifically, for any given pair of scalar functions $\mu, \lambda$, where later $\mu$ will describe the chemical potential and~$\lambda$~a certain pressure, we have the inequality
	\begin{align*}
		-\mathcal D
		&\coloneqq \partial_t e + \nabla\cdot (e\vel) + \nabla\cdot \J_e  - \mu \bigprt{ \partial_t \varphi + \nabla\cdot (\varphi\vel + \J_\varphi) - r_\varphi} 
		- \lambda \bigprt{\nabla\cdot \vel - \alpha r_\varphi}
		\leq 0.
	\end{align*}
	In view of the above computations for the free energy density and the kinetic energy density, this is equivalent to
	\begin{align}
		\label{AGG:modeling:eq:Lagrange_multipliers}
		-\mathcal D
		&= \big( f_{,\varphi} - \nabla\cdot f_{,\Grad\varphi} \big) \dot\varphi 
		+ \nabla\cdot \big( f_{,\Grad\varphi} \dot\varphi \big) 
		- \big(\Grad\varphi \otimes f_{,\Grad\varphi} - f \mathbf I \big) : \Grad\vel 
		+ \nabla\cdot \biggprt{ -\frac{\abs \vel^2}{2} \tilde\J + \trans{\tilde\Str}\vel} \nonumber\\
		&\quad- \tilde\Str : \Grad\vel 
		+ \nabla\cdot \J_e 
		- \mu \bigprt{ \dot\varphi + \varphi \nabla\cdot\vel + \nabla\cdot \J_\varphi  - r_\varphi} 
		- \lambda \bigprt{\nabla\cdot \vel - \alpha r_\varphi} \nonumber\\
		&= \big( f_{,\varphi} - \nabla\cdot f_{,\Grad\varphi} - \mu\big) \dot\varphi 
		+ \nabla\cdot \biggprt{ f_{,\Grad\varphi} \dot\varphi - \frac{\abs \vel^2}{2} \tilde\J + \trans{\tilde\Str}\vel + \J_e - \mu\J_\varphi} \nonumber\\
		&\quad+ \Bigprt{-\Grad\varphi \otimes f_{,\Grad\varphi} - \tilde\Str + \bigprt{f-\mu\varphi-\lambda }\mathbf I } : \Grad\vel 
		+ \J_\varphi \cdot \Grad\mu
		+ r_\varphi \bigprt{\mu+\alpha\lambda} 
		\leq 0.
	\end{align}
	Assuming that the terms $f_{,\Grad\varphi} \dot\varphi - \frac{\abs \vel^2}{2} \tilde\J + \trans{\tilde\Str}\vel + \J_e - \mu\J_\varphi$ and $-\Grad\varphi \otimes f_{,\Grad\varphi} - \tilde\Str + \bigprt{f-\mu\varphi-\lambda }\mathbf I$ as well as $\Grad\mu$ and $\mu+\alpha\lambda$ do not depend on $\dot\varphi$, and regarding that $\dot\varphi$ may attain arbitrary values, we conclude that the term $f_{,\varphi} - \nabla\cdot f_{,\Grad\varphi} - \mu$ needs to vanish. This yields
	\begin{align}
		\label{AGG:modeling:eq:eq_chem_pot}
		\mu 
		= f_{,\varphi} - \nabla\cdot f_{,\Grad\varphi}
	\end{align}
	as equation for the chemical potential $\mu$. Furthermore, we specify the energy flux $\J_e$ as
	\begin{align*}
		\J_e 
		= - f_{,\Grad\varphi} \dot\varphi + \frac{\abs \vel^2}{2} \tilde\J - \trans{\tilde\Str}\vel + \mu\J_\varphi 
	\end{align*}
	such that the divergence term in inequality \eqref{AGG:modeling:eq:Lagrange_multipliers} vanishes. Now, denoting the symmetric part of $\Grad\vel$ by $\D\vel \coloneqq \frac12 \bigprt{ \Grad\vel + \trans{\Grad\vel} }$, the additional assumption that $\tilde\Str$ is symmetric implies
	\begin{align*}
		&\bigprt{ \tilde\Str + \Grad\varphi \otimes f_{,\Grad\varphi} } : \Grad\vel \\
		&= \bigprt{ \tilde\Str + \Grad\varphi \otimes f_{,\Grad\varphi} } : \D\vel
		+ \bigprt{ \Grad\varphi \otimes f_{,\Grad\varphi} } : \tfrac12 \bigprt{ \Grad\vel - \trans{\Grad\vel} } \\
		&= \bigprt{ \tilde\Str + \Grad\varphi \otimes f_{,\Grad\varphi} } : \D\vel 
		+ \tfrac12 \bigprt{ \Grad\varphi \otimes f_{,\Grad\varphi} - f_{,\Grad\varphi} \otimes \Grad\varphi} : \tfrac12 \bigprt{ \Grad\vel - \trans{\Grad\vel} }.
	\end{align*}
	With the skew part of $\Grad\vel$ attaining arbitrary values independent of $\D\vel$, it follows that the term $\Grad\varphi \otimes f_{,\Grad\varphi} - f_{,\Grad\varphi} \otimes \Grad\varphi$ vanishes. Altogether, inequality~\eqref{AGG:modeling:eq:Lagrange_multipliers} simplifies to
	\begin{align}
		\label{AGG:modeling:eq:Lagrange_multipliers_remaining}
		-\mathcal D
		&= \Bigprt{-\Grad\varphi \otimes f_{,\Grad\varphi} - \tilde\Str + \bigprt{ f-\mu\varphi-\lambda } \mathbf I }: \D\vel 
		+ \J_\varphi \cdot \Grad\mu 
		+ r_\varphi \bigprt{\mu+\alpha\lambda}
		\leq 0.
	\end{align}
	This reveals that the mechanisms leading to energy dissipation are viscosity, diffusion and phase transition.

	\medskip
	\noindent\textbf{Constitutive assumptions.}
	For the dissipation inequality~\eqref{AGG:modeling:eq:Lagrange_multipliers_remaining} to hold true, we propose the following \textit{constitutive assumptions}: 
	\begin{enumerate}
		\item
		The stress $\tilde\Str$ satisfies the identity
		\begin{align*}
			-\Grad\varphi \otimes f_{,\Grad\varphi} - \tilde\Str + \bigprt{ f-\mu\varphi-\lambda } \mathbf I 
			= - 2\nu(\varphi) \D\vel - \eta(\varphi) \nabla\cdot\vel \mathbf I
			\eqqcolon -\Str(\varphi, \D\vel)
		\end{align*}
		for some viscosity coefficients $\nu(\varphi), \eta(\varphi)\geq0$. In particular, this means that the mixture is Newtonian.
		\item For the phase field flux $\J_\varphi$, motivated by a generalized Fick's law, a diffusion law
		\begin{align*}
			\J_\varphi = -\mj(\varphi) \Grad\mu
		\end{align*}
		holds, with a diffusion mobility $\mj(\varphi)\geq0$.
		\item The phase field production rate $r_\varphi$ fulfills the phase transition law
		\begin{align*}
			r_\varphi = - \mr(\varphi) \bigprt{\mu+\alpha\lambda}
		\end{align*}
		for some transition mobility $\mr(\varphi)\geq0$.
	\end{enumerate}

	\medskip
	\noindent\textbf{The model.}
	Eventually, we insert the constitutive assumptions into our balance equations. For the momentum equation~\eqref{AGG:modeling:eq:momentum_eq}, we compute, invoking the equation for the chemical potential~\eqref{AGG:modeling:eq:eq_chem_pot},
	\begin{align*}
		&\partial_t (\rho\vel) + \nabla\cdot \bigprt{\rho\vel\otimes\vel} + \nabla\cdot \bigprt{\vel\otimes\tilde\J}
		= \nabla\cdot \tilde\Str \\
		& = \nabla\cdot \Bigprt{ \Str(\varphi, \D\vel) -\Grad\varphi \otimes f_{,\Grad\varphi} + \bigprt{ f-\mu\varphi-\lambda } \mathbf I } \\
		&= \nabla\cdot\Str(\varphi, \D\vel) - \bigprt{\nabla\cdot f_{,\Grad\varphi}} \Grad\varphi - \D^2\varphi f_{,\Grad\varphi} 
		 + f_{,\varphi} \Grad\varphi + \D^2\varphi f_{,\Grad\varphi} - \varphi\Grad\mu - \mu\Grad\varphi - \Grad\lambda\\
		&= \nabla\cdot\Str(\varphi, \D\vel) - \varphi\Grad\mu - \Grad\lambda.
	\end{align*}
	The divergence equation~\eqref{AGG:modeling:eq:div_eq} takes the form
	\begin{align*}
		\nabla\cdot\vel
		= \alpha r_\varphi
		= - \alpha \mr(\varphi) \bigprt{\mu+\alpha\lambda},
	\end{align*}
	whereas the phase field equation~\eqref{AGG:modeling:eq:phase_field_eq} results in 
	\begin{align*}
		\partial_t \varphi + \nabla\cdot (\varphi\vel)
		= - \nabla\cdot \J_\varphi + r_\varphi
		= \nabla\cdot \bigprt{\mj(\varphi) \Grad\mu} - \mr(\varphi) \bigprt{\mu+\alpha\lambda}.
	\end{align*}

	Altogether, writing $Q=(0,\infty)\times\Omega$ for the space-time cylinder, we derived the quasi-incompressible Navier--Stokes/Cahn--Hilliard model
	\begin{equation}
		\label{AGG:modeling:eqs:model}
		\arraycolsep=2pt
		\begin{array}{rclll}
			\partial_t (\rho\vel) + \nabla\cdot(\rho\vel\otimes\vel) 
			+ \nabla\cdot(\vel\otimes\tilde\J) \\[0.2ex]
			- \nabla\cdot \Str(\varphi,\D\vel) + \Grad\lambda
			&=& -\varphi\Grad\mu 
			&\text{ in $Q$}, \\[1ex]
			\nabla\cdot\vel 
			&=& - \alpha \mr(\varphi) \bigprt{\mu+\alpha\lambda} 
			&\text{ in $Q$}, \\[1ex]
			\partial_t \varphi + \nabla\cdot (\varphi\vel) 
			&=& \nabla\cdot \bigprt{\mj(\varphi) \Grad\mu} - \mr(\varphi) \bigprt{\mu+\alpha\lambda}
			&\text{ in $Q$}, \\[1ex]
			\mu 
			&=& f_{,\varphi} - \nabla\cdot f_{,\Grad\varphi}
			&\text{ in $Q$},
		\end{array}
	\end{equation}
	where the relations
	\begin{equation*}
		\arraycolsep=2pt
		\begin{array}{rcl}
			\Str(\varphi, \D\vel) &=& 2\nu(\varphi) \D\vel + \eta(\varphi) \nabla\cdot\vel \mathbf I,\\[1ex]
			\rho(\varphi) &=& b_+ + b_- \varphi,\\[1ex]
			\tilde\J(\varphi,\Grad\mu) &=& -b_- \mj (\varphi) \Grad\mu
		\end{array}
	\end{equation*}
	are satisfied. We recall that the occurring constants are given by 
	\begin{align*}
			b_\pm = \tfrac{\tilde\rho_+ \pm \tilde\rho_-}{2}, \qquad
			c_\pm = \tfrac{1}{\tilde\rho_+} \pm \tfrac{1}{\tilde\rho_-}, \qquad
			\alpha = \tfrac{c_-}{c_+}, \qquad
			\text{where } \tilde\rho_\pm>0, \quad \tilde\rho_+ \neq \tilde\rho_-,
	\end{align*}
	and that the viscosity and mobility coefficients $\nu(\varphi),\eta(\varphi),\mj(\varphi),\mr(\varphi) \geq0$ are nonnegative.
	
	With $S=(0,\infty)\times\partial\Omega$ denoting the lateral surface of the space-time cylinder, we add boundary conditions to system \eqref{AGG:modeling:eqs:model}, namely
	\begin{equation}
		\label{AGG:modeling:eqs:bdry_cond}
		\arraycolsep=2pt
		\begin{array}{rclll}
			\vel \vert_{\partial\Omega}&=&0&\text{ on $S$}, \\[1ex]
			\partial_\n\varphi \vert_{\partial\Omega} = \partial_\n\mu \vert_{\partial\Omega} &=&0&\text{ on $S$},
		\end{array}
	\end{equation}
	where $\n$ is the outer unit normal vector field on $\partial\Omega$.
	Concerning $\vel$, this is the no-slip boundary condition for viscous fluids. Moreover, the identity $\partial_\n\varphi \vert_{\partial\Omega} =0$ prescribes a contact angle of $\pi/2$ between the diffuse interface and the boundary of the domain, while we ensure by $\partial_\n\mu \vert_{\partial\Omega} =0$ that there is no mass flux across the boundary.
	
	Finally, before stating the formal energy balance of the system, let us recall that the total energy density is given by $e(\vel,\varphi,\nabla\varphi)= f(\varphi,\nabla\varphi) + \rho \frac{\abs \vel^2}{2}$.
	Multiplying the first equation of~\eqref{AGG:modeling:eqs:model} by $\vel$, the second by $\lambda$, the third by $\mu$, and the fourth by $-\delt\varphi$, then integrating over~$\Omega$ and summing, reveals in view of the boundary conditions \eqref{AGG:modeling:eqs:bdry_cond} that every sufficiently smooth solution to the model satisfies
	\begin{align*}
		&\ddt \int_\Omega e\bigprt{\vel(t),\varphi(t),\Grad\varphi(t)} \dx
		= - \int_\Omega \Str(\varphi,\D\vel) : \D\vel + \mj(\varphi) \abs{\Grad\mu}^2 + \mr(\varphi) \bigprt{\mu+\alpha\lambda}^2 \dx.
	\end{align*}
	For details, we refer to the proof of Lemma~\ref{AGG:exws:lemma:discr_energy_est}, where the energy inequality for the corresponding time-discrete system is shown.

	%%%%%%%%%%%%%%%%%%%%%%%%%%%%%%%%%%%%%%%%%%%%%%%%%%%%%%%%%%%%%%%%%
	%%%%%%%%%%%%%%%%%%%%%%%%%%%%%%%%%%%%%%%%%%%%%%%%%%%%%%%%%%%%%%%%%
	%%%%%%%%%%%%%%%%%%%%%%%%%%%%%%%%%%%%%%%%%%%%%%%%%%%%%%%%%%%%%%%%%
	\section{Existence of Weak Solutions to the Model with Volume Averaged Velocity} \label{sec:AGG_exws}
	%%%%%%%%%%%%%%%%%%%%%%%%%%%%%%%%%%%%%%%%%%%%%%%%%%%%%%%%%%%%%%%%%
	%%%%%%%%%%%%%%%%%%%%%%%%%%%%%%%%%%%%%%%%%%%%%%%%%%%%%%%%%%%%%%%%%
	%%%%%%%%%%%%%%%%%%%%%%%%%%%%%%%%%%%%%%%%%%%%%%%%%%%%%%%%%%%%%%%%%
	
	In this section, we prove existence of weak solutions to the system \eqref{eq:AGG} under the assumptions \eqref{ass:domain}--\eqref{ass:coeffs} which we require to hold throughout the entire section. We point out that \eqref{eq:AGG} is the model derived in Section \ref{sec:AGG_modeling}, where we impose the free energy density to be of the form $f(\varphi,\Grad\varphi) = F(\varphi) + \frac{\abs{\Grad\varphi}^2}{2}$, with a singular homogeneous free energy density~$F$ specified in \eqref{ass:pot}.
	Then, the total energy of the system is given by
	\begin{align*}
		E_\tot\prt{\vel,\varphi}
		= E_\kin(\vel) + E_\free(\varphi) 
		= \int_\Omega \rho \frac{\abs \vel^2}{2} \dx + \int_\Omega F(\varphi) + \frac{\abs{\Grad\varphi}^2}{2} \dx.
	\end{align*}
	Moreover, with $F_0(s)\coloneqq F(s)+\frac\kappa2s^2$ being the convex part of $F$, we denote the convex part of the free energy by
	\begin{align*}
		E(\varphi) =
		\begin{cases}
			\int_\Omega F_0(\varphi) + \frac12 \abs{\Grad \varphi}^2 \dx &\text{for } \varphi\in\dom E, \\
			\infty &\text{else},
		\end{cases}
	\end{align*}
	where $\dom E=\{\varphi\in \rmH^1(\Omega) : \abs{\varphi}\leq1 \text{ a.e.~in }\Omega\}$. This will be used to apply results related to the subgradient of $E$, cf.~Proposition~\ref{prelim:prop:subgrad}.
	
	In the following, we write $Q_{(s,t) }= (s,t)\times\Omega$ and $Q=Q_{(0,\infty)}$ for space-time cylinders as well as $S_{(s,t)} = (s,t)\times\partial\Omega$ and $S=S_{(0,\infty)}$ for their corresponding lateral surfaces.

	\subsection{Definition of Weak Solutions}  \label{AGG:exws:subsec:def_ws}
	
	Subsequently, we state our precise definition of weak solutions to system~\eqref{eq:AGG}.
	
	\begin{definition}
		\label{AGG:exws:def:weak_sol}
		Let $\vel_0\in \rmL^2(\Omega)^d$ and $\varphi_0\in \rmH^1(\Omega)$ with $\abs{\varphi_0}\leq1$ a.e.~in $\Omega$. Under the assumptions \eqref{ass:domain}--\eqref{ass:coeffs}, a quadruple $(\vel,\lambda,\mu,\varphi)$ with the properties
		\begin{alignat*}{2}
			\vel &\in \BCw([0,\infty);\rmL^2(\Omega)^d) \cap \rmL^2(0,\infty;\rmH_0^1(\Omega)^d),\\
			\lambda &\in \rmL^2_\uloc([0,\infty);\rmL^2(\Omega)),\\
			\mu &\in \rmL^2_\uloc([0,\infty);\rmH^1(\Omega)) \text{ with } \Grad\mu\in \rmL^2(0,\infty;\rmL^2(\Omega)^d),\\
			\varphi &\in \BCw([0,\infty);\rmH^1(\Omega)) \cap \rmL^2_\uloc([0,\infty);\rmW^{2,p}(\Omega)),\\
			F'(\varphi) &\in \rmL^2_\uloc([0,\infty);\rmL^p(\Omega)),
		\end{alignat*}
		where $p=6$ if $d=3$ and $p\in[2,\infty)$ is arbitrary if $d=2$, respectively, is called a \emph{weak solution of \eqref{eq:AGG}} if the following equations are satisfied:
		\begin{subequations}
		\begin{align} 
			\label{AGG:exws:eq:weak_sol_a}
			&\int_Q -\rho\vel \cdot \delt \bpsi - (\rho\vel\otimes\vel) : \Grad\bpsi - (\vel\otimes\tilde\J) : \Grad\bpsi
			+ \Str(\varphi, \D\vel) : \Grad\bpsi - \lambda \nabla\cdot\bpsi \dtx \nonumber\\
			&= -\int_Q \varphi\Grad\mu \cdot \bpsi \dtx
		\end{align}
		for all $\bpsi\in \rmC_0^\infty(Q)^d$, along with
		\begin{align}
			\label{AGG:exws:eq:weak_sol_b} 
			\nabla\cdot\vel = -\alpha \mr(\varphi) \bigprt{\mu+\alpha\lambda} \quad\text{a.e.~in $Q$}, 
		\end{align}
		as well as
		\begin{align}
			\label{AGG:exws:eq:weak_sol_c} 
			\int_Q - \varphi\delt\zeta - \varphi\vel\cdot\Grad\zeta \dtx
			= \int_Q -\mj(\varphi) \Grad\mu \cdot\Grad\zeta - \mr(\varphi) \bigprt{\mu+\alpha\lambda} \zeta \dtx
		\end{align}
		for all $\zeta\in \rmC_0^\infty((0,\infty)\times\overline\Omega)$, and
		\begin{align}
			\label{AGG:exws:eq:weak_sol_d}
			\mu = F'(\varphi)-\Lap\varphi \quad\text{a.e.~in $Q$}. 
		\end{align}
		\end{subequations}
		Moreover, the boundary and initial conditions
		\begin{alignat*}{2}
			\partial_\n\varphi \vert_{\partial\Omega} &=0 &&\quad\text{a.e.~in $S$}, \\ 
			(\vel,\varphi) \vert_{t=0} &= (\vel_0,\varphi_0) &&\quad\text{a.e.~in $\Omega$}
		\end{alignat*}
		need to hold, as well as the energy inequality 
		\begin{align}
			\label{AGG:exws:eq:weak_sol_energy_ineq}
			&E_\tot\bigprt{\vel(t),\varphi(t)}
			+ \int_{Q_{(s,t)}} \Str(\varphi,\D\vel) : \D\vel + \mj(\varphi) \abs{\Grad\mu}^2 + \mr(\varphi) \bigprt{\mu+\alpha\lambda}^2 \dtaux \nonumber\\
			&\leq E_\tot\bigprt{\vel(s),\varphi(s)}
		\end{align}
		for almost all $s\in [0,\infty)$ including $s=0$, and for all $t\in[s,\infty)$. 
	\end{definition}

	\begin{remark}
		For stationary states $(\vel^*,\lambda^*,\mu^*,\varphi^*)$ of the system~\eqref{eq:AGG}, we deduce from the energy inequality \eqref{AGG:exws:eq:weak_sol_energy_ineq} that $\D\vel^*=0$, $\Grad\mu^*=0$ and $\mu^*+\alpha\lambda^*=0$. As a direct consequence, we obtain $\vel^*=0$ by Korn's inequality, as well as $\mu^*$ and $\lambda^*$ being constant. Therefore,~$(\mu^*,\varphi^*)$ is a stationary state of the Cahn--Hilliard equation, that is a solution of 
		\begin{equation*}
			\arraycolsep=2pt
			\begin{array}{rclll}
				F'(\varphi^*) - \Lap\varphi^* &=&\mu^* &\text{ in } \Omega, \\[0.5ex]
				\partial_\n \varphi^* \vert_{\partial\Omega} &=&0 &\text{ on } \partial\Omega.
			\end{array}
		\end{equation*}
	\end{remark}

	\subsection{Implicit Time Discretization} \label{AGG:exws:subsec:time_discr}
	
	In this section, we construct approximate weak solutions of system~\eqref{eq:AGG} using an implicit time discretization.
		
	\begin{definition}
		\label{AGG:exws:def:time_discr_system}
		With fixed $N\in\N$, we denote the time step size by $h=\frac{1}{N}$. For $k\in\N_0$, let~$\vel_k\in \rmL^2(\Omega)^d$ and $\varphi_k\in \rmH^1(\Omega)$ satisfying $F'(\varphi_k)\in \rmL^2(\Omega)$ be given, and set $\rho_k\coloneqq b_+ + b_- \varphi_k$. Then, we determine a quadruple $(\vel,\lambda,\mu,\varphi) \in \rmH_0^1(\Omega)^d \times \rmL^2(\Omega) \times \rmH^2_\Neum(\Omega) \times \mathcal D(\partial E)$ as a solution of the following time-discrete system at time step $k+1$:
		\begin{subequations}
		\begin{align}
			&\int_\Omega \frac{\rho\vel-\rho_k\vel_k}{h} \cdot \bpsi + \nabla\cdot(\rho_k\vel\otimes\vel) \cdot \bpsi + \nabla\cdot (\vel\otimes\tilde\J) \cdot \bpsi
			+ \Str(\varphi_k, \D\vel) : \Grad\bpsi - \lambda \nabla\cdot\bpsi \dx \nonumber\\
			&= -\int_\Omega \varphi_k\Grad\mu \cdot \bpsi \dx
			\label{AGG:exws:eq:time-discr_sol_a}
		\end{align}
		for all $\bpsi\in \rmC_0^\infty(\Omega)^d$, where $\tilde\J = -b_- \mj (\varphi_k) \Grad\mu$ and $\rho= b_+ + b_- \varphi$, as well as
		\begin{alignat}{2}
			\nabla\cdot\vel &= -\alpha \mr(\varphi_k) \bigprt{\mu+\alpha\lambda} &&\quad\text{a.e.~in $\Omega$},
			\label{AGG:exws:eq:time-discr_sol_b}\\
			\frac{\varphi-\varphi_k}{h} + \nabla\cdot(\varphi_k\vel) &= \nabla\cdot\bigprt{\mj(\varphi_k) \Grad\mu} - \mr(\varphi_k) \bigprt{\mu+\alpha\lambda} &&\quad\text{a.e.~in $\Omega$}, 
			\label{AGG:exws:eq:time-discr_sol_c}\\
			\mu + \kappa\frac{\varphi+\varphi_k}{2} &= F_0'(\varphi)-\Lap\varphi &&\quad\text{a.e.~in $\Omega$}.
			\label{AGG:exws:eq:time-discr_sol_d}
		\end{alignat}
		\end{subequations}
	\end{definition}

	\begin{remark}
		\label{AGG:exws:rmk:eqiv_time_discr_NS}
		Multiplying equation \eqref{AGG:exws:eq:time-discr_sol_c} by $b_-$, a direct computation including the definition of the quantities $\tilde\J$, $\alpha$ and $\rho_k$ or $\rho$, respectively, reveals, in combination with \eqref{AGG:exws:eq:time-discr_sol_b}, that $\nabla\cdot\tilde\J =-\frac{\rho-\rho_k}{h} - \nabla\cdot(\rho_k\vel)$.
		This, along with the identity $\nabla\cdot\prt{\vel\otimes\tilde\J} = \nabla\cdot\tilde\J\vel + \prt{\tilde\J\cdot\nabla}\vel$, allows for rewriting \eqref{AGG:exws:eq:time-discr_sol_a} in the equivalent form
		\begin{align}
			&\int_\Omega \frac{\rho\vel-\rho_k\vel_k}{h} \cdot\bpsi 
			+ \nabla\cdot(\rho_k\vel\otimes\vel) \cdot \bpsi
			+ \Bigprt{\nabla\cdot\tilde\J -\frac{\rho-\rho_k}{h} - \nabla\cdot(\rho_k\vel)} \frac{\vel}{2} \cdot \bpsi \dx \nonumber\\
			&\quad+ \int_\Omega \bigprt{\tilde\J\cdot\nabla}\vel \cdot \bpsi
			+ \Str(\varphi_k, \D\vel) : \Grad\bpsi - \lambda \nabla\cdot\bpsi \dx 
			= -\int_\Omega \varphi_k\Grad\mu \cdot \bpsi \dx
			\label{AGG:exws:eq:time-discr_sol_a_equiv}
		\end{align}
		for all $\bpsi\in \rmC_0^\infty(\Omega)^d$.
	\end{remark}

	\begin{remark}
		\label{AGG:exws:rmk:mean_phi_cons}
		For given $\varphi_k\in \rmH^1(\Omega)$, a solution $(\vel,\lambda,\mu,\varphi)$ to the time-discrete system~\eqref{AGG:exws:eq:time-discr_sol_a}--\eqref{AGG:exws:eq:time-discr_sol_d} fulfills 
		\begin{align*}
			\mean{\varphi} = \mean{\varphi_k}.
		\end{align*}
		This can be verified as follows by taking the spatial integral of \eqref{AGG:exws:eq:time-discr_sol_c} and using \eqref{AGG:exws:eq:time-discr_sol_b}, along with the divergence theorem and the boundary conditions:
		\begin{align*}
			\int_\Omega \frac{\varphi-\varphi_k}{h} \dx
			&= \int_\Omega - \nabla\cdot(\varphi_k\vel)
			+ \nabla\cdot\bigprt{\mj(\varphi_k) \Grad\mu} - \mr(\varphi_k) \bigprt{\mu+\alpha\lambda} \dx \\
			&= \int_\Omega - \nabla\cdot(\varphi_k\vel)
			+ \nabla\cdot\bigprt{\mj(\varphi_k) \Grad\mu} + \frac1\alpha \nabla\cdot\vel \dx
			= 0.
		\end{align*}
	\end{remark}

	\begin{lemma}
		\label{AGG:exws:lemma:estimates}
		Let $\varphi_k \in \rmH^2(\Omega)$ with $\abs{\varphi_k}\leq1$ a.e.~in $\Omega$ be given, and further assume a pair~$(\mu,\varphi) \in \rmH^1(\Omega) \times \mathcal D(\partial E)$ with $\mean{\varphi} = \mean{\varphi_k} \in (-1,1)$ to solve \eqref{AGG:exws:eq:time-discr_sol_d}. Then, there exists a constant $C=C(\mean{\varphi_k})>0$ such that
		\begin{align*}
			\norm{\varphi}_{\rmH^2(\Omega)} + \norm{F_0'(\varphi)}_{\rmL^2(\Omega)} + \Bigabs{\int_\Omega \mu \dx}
			&\leq C \bigprt{\norm{\Grad\mu}_{\rmL^2(\Omega)} + \norm{\Grad\varphi}_{\rmL^2(\Omega)}^2 + \norm{\Grad\varphi_k}_{\rmL^2(\Omega)}^2 + 1} , \\
			\norm{\partial E(\varphi)}_{\rmL^2(\Omega)}
			&\leq C\bigprt{\norm{\mu}_{\rmL^2(\Omega)} +1}.
		\end{align*}
	\end{lemma}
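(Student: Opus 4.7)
The key identification, enabled by Proposition~\ref{prelim:prop:subgrad}, is that equation~\eqref{AGG:exws:eq:time-discr_sol_d} reads $\partial E(\varphi) = -\Lap \varphi + F_0'(\varphi) = \mu + \tfrac{\kappa}{2}(\varphi + \varphi_k)$ a.e.~in $\Omega$. Since $\varphi\in\dom E$ and $|\varphi_k|\leq 1$ a.e., the triangle inequality combined with $\|\varphi+\varphi_k\|_{\rmL^2(\Omega)} \leq 2\sqrt{|\Omega|}$ immediately yields the second estimate $\|\partial E(\varphi)\|_{\rmL^2(\Omega)} \leq \|\mu\|_{\rmL^2(\Omega)} + \kappa\sqrt{|\Omega|}$.

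For the first estimate I would set $m \coloneqq \mean{\varphi} = \mean{\varphi_k} \in (-1,1)$ and switch to the mean-prescribed variant $\partial E_m(\varphi) = -\Lap\varphi + P_0(F_0'(\varphi))$ from Proposition~\ref{prelim:prop:subgrad_mean}. Taking the spatial mean of \eqref{AGG:exws:eq:time-discr_sol_d} gives $\mean{F_0'(\varphi)} = \mean{\mu} + \tfrac{\kappa}{2}\mean{\varphi+\varphi_k}$, and subtracting this identity from the equation itself produces $\partial E_m(\varphi) = (\mu - \mean{\mu}) + \tfrac{\kappa}{2} P_0(\varphi + \varphi_k)$. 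Combining Poincaré--Wirtinger with $|\varphi|,|\varphi_k| \leq 1$ then bounds $\|\partial E_m(\varphi)\|_{\rmL^2(\Omega)} \leq C\|\Grad\mu\|_{\rmL^2(\Omega)} + C$, and plugging this into \eqref{eq:subgrad_with:estimate_H2} delivers $\|\varphi\|_{\rmH^2(\Omega)} + \|F_0'(\varphi)\|_{\rmL^2(\Omega)} \leq C(\|\Grad\mu\|_{\rmL^2(\Omega)} + 1)$.

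To control $|\int_\Omega \mu\dx|$ it suffices, via the same mean identity, to bound $\int_\Omega|F_0'(\varphi)|\dx$, and the main tool I would invoke is the well-known singular-potential inequality
$$F_0'(s)(s-m) \geq \delta\,|F_0'(s)| - C \quad\text{for all } s \in (-1,1),$$
valid with constants $\delta, C > 0$ depending only on $m$; it follows from the convexity of $F_0$, the blow-up $F_0'(s) \to \pm\infty$ as $s \to \pm 1$, and the fact that $s - m$ carries the correct sign near each endpoint. Testing \eqref{AGG:exws:eq:time-discr_sol_d} with $\varphi - m \in \rmL^2_{(0)}(\Omega)$, integrating by parts using $\partial_\n\varphi\vert_{\partial\Omega}=0$, and exploiting $\int_\Omega(\varphi - m)\dx = 0$ together with Poincaré--Wirtinger yields
$$\int_\Omega F_0'(\varphi)(\varphi-m)\dx + \int_\Omega|\Grad\varphi|^2\dx \leq C\|\Grad\mu\|_{\rmL^2(\Omega)} + C.$$
Dropping the nonnegative Dirichlet term and applying the singular-potential inequality gives $\int_\Omega|F_0'(\varphi)|\dx \leq C(\|\Grad\mu\|_{\rmL^2(\Omega)}+1)$, and the mean identity then transfers this bound to $|\int_\Omega \mu\dx|$.

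The central technical obstacle is the singular-potential inequality: its constants necessarily depend on the strict separation of $m = \mean{\varphi_k}$ from $\pm 1$, which is precisely the source of the dependence $C = C(\mean{\varphi_k})$ in the statement. Everything else in the argument is routine Cauchy--Schwarz, Poincaré--Wirtinger and the pointwise bounds $|\varphi|,|\varphi_k| \leq 1$. The Dirichlet terms $\|\Grad\varphi\|_{\rmL^2(\Omega)}^2$ and $\|\Grad\varphi_k\|_{\rmL^2(\Omega)}^2$ on the right-hand side of the stated bound are in fact not produced by this route, but they are harmless since they are nonnegative; their presence in the statement only makes the estimate weaker than what the argument provides.
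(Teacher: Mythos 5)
Your proof is correct, and for the two bounds that carry the substance of the lemma it coincides with the paper's argument: you pass to the mean-free form of \eqref{AGG:exws:eq:time-discr_sol_d}, identify the right-hand side with $\partial E_m(\varphi)$, control $\norm{\partial E_m(\varphi)}_{\rmL^2(\Omega)}$ by $C(\norm{\Grad\mu}_{\rmL^2(\Omega)}+1)$ via Poincar\'e--Wirtinger and the pointwise bounds, feed this into \eqref{eq:subgrad_with:estimate_H2}, and read off the $\partial E$ bound directly from the unprojected equation --- exactly as in the paper. You are also right that the terms $\norm{\Grad\varphi}_{\rmL^2(\Omega)}^2$ and $\norm{\Grad\varphi_k}_{\rmL^2(\Omega)}^2$ are not actually needed; the paper's proof inserts them only to match the stated form. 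The one place you genuinely diverge is the bound on $\bigabs{\int_\Omega\mu\dx}$: the paper simply integrates \eqref{AGG:exws:eq:time-discr_sol_d} over $\Omega$ (the $\Lap\varphi$ term drops by the Neumann condition) and then reuses the already-established $\rmL^2$ bound on $F_0'(\varphi)$ via Cauchy--Schwarz, whereas you test with $\varphi-m$ and invoke the singular-potential inequality $F_0'(s)(s-m)\geq\delta\abs{F_0'(s)}-C$. Your route is valid and self-contained --- it is precisely the mechanism the paper deploys as \eqref{AGG:exws:eq:est_F0'} in the proof of Lemma~\ref{LT:exws:lemma:estimates}, where no prior $\rmL^2$ control of $F_0'(\varphi)$ is available --- but here it is heavier than necessary, since step one already gives $\bigabs{\int_\Omega F_0'(\varphi)\dx}\leq C\norm{F_0'(\varphi)}_{\rmL^2(\Omega)}\leq C(\norm{\Grad\mu}_{\rmL^2(\Omega)}+1)$ for free.
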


	\begin{proof}		
		Denoting $m\coloneqq \mean{\varphi} = \mean{\varphi_k}$, we first observe that $(\mu,\varphi)$ also solves the mean-free part of equality \eqref{AGG:exws:eq:time-discr_sol_d}, which is
		\begin{align*}
			P_0(\mu) + \kappa\frac{\varphi+\varphi_k}{2} - \kappa m = P_0(F_0'(\varphi)) - \Lap\varphi \quad\text{a.e.~in $\Omega$},
		\end{align*}
		where the right-hand side equals $\partial E_m(\varphi)$ due to identity~\eqref{eq:subgrad_with:subgradient}. Estimate~\eqref{eq:subgrad_with:estimate_H2} thus yields
		\begin{align*}
			\norm{\varphi}_{\rmH^2(\Omega)}^2 + \norm{F_0'(\varphi)}_{\rmL^2(\Omega)}^2 \dx
			&\leq C \bigprt{\norm{\partial E_m(\varphi)}_{\rmL^2(\Omega)}^2 + \norm{\varphi}_{\rmL^2(\Omega)}^2 +1 } \\
			&= C \biggprt{\Bignorm{P_0(\mu) + \kappa\frac{\varphi+\varphi_k}{2} - \kappa m}_{\rmL^2(\Omega)}^2 
				+ \norm{\varphi}_{\rmL^2(\Omega)}^2 +1 } \\
			&\leq C \bigprt{\|\Grad\mu\|_{\rmL^2(\Omega)}^2 + \|\Grad\varphi\|_{\rmL^2(\Omega)}^4 + \|\Grad\varphi_k\|_{\rmL^2(\Omega)}^4 + 1}.
		\end{align*}
		
		In order to show the claimed estimate for $\abs{\int_\Omega\mu\dx}$, we employ \eqref{AGG:exws:eq:time-discr_sol_d} along with the divergence theorem and the boundary condition for $\varphi\in \mathcal D(\partial E)$. Additionally using the above estimate for $\norm{F_0'(\varphi)}_{\rmL^2(\Omega)}$ and the bounds $\abs{\varphi_k}\leq1$, $\abs{\varphi}\leq1$ a.e.~in $\Omega$ entails
		\begin{align*}
			\Bigabs{\int_\Omega\mu\dx}
			&= \Bigabs{\int_\Omega F_0'(\varphi)-\Lap\varphi - \kappa\frac{\varphi+\varphi_k}{2} \dx} \\
			&= \Bigabs{\int_\Omega F_0'(\varphi) - \kappa\frac{\varphi+\varphi_k}{2} \dx + \int_{\partial\Omega} \partial_\n\varphi \vert_{\partial\Omega} \dsix }\\
			&\leq C \norm{F_0'(\varphi)}_{\rmL^2(\Omega)} + \frac\kappa2 \int_\Omega \abs{\varphi}+\abs{\varphi_k} \dx \\ 
			&\leq C \bigprt{ \norm{\Grad\mu}_{\rmL^2(\Omega)} + \norm{\Grad\varphi}_{\rmL^2(\Omega)}^2 + \norm{\Grad\varphi_k}_{\rmL^2(\Omega)}^2 +1 }.
		\end{align*}
		
		Eventually, from equation~\eqref{AGG:exws:eq:time-discr_sol_d} and identity~\eqref{eq:subgrad_without:subgradient}, we infer the estimate
		\begin{align*}
			\norm{\partial E(\varphi)}_{\rmL^2(\Omega)}
			= \norm{F_0'(\varphi)-\Lap\varphi}_{\rmL^2(\Omega)}
			= \Bignorm{\mu+\kappa\frac{\varphi+\varphi_k}{2}}_{\rmL^2(\Omega)}
			\leq C\bigprt{ \norm{\mu}_{\rmL^2(\Omega)} +1},
		\end{align*}
		which finishes the proof.
	\end{proof}

	\begin{lemma}
		\label{AGG:exws:lemma:discr_energy_est}
		Let $\vel_k\in \rmL^2(\Omega)^d$ and $\varphi_k\in \rmH^2(\Omega)$ be given, and set $\rho_k\coloneqq b_+ + b_- \varphi_k$. Then, a solution $(\vel,\lambda,\mu,\varphi) \in \rmH_0^1(\Omega)^d \times \rmL^2(\Omega)\times \rmH^2_\Neum(\Omega) \times  \mathcal D(\partial E) $ to the time-discrete system~\eqref{AGG:exws:eq:time-discr_sol_a}--\eqref{AGG:exws:eq:time-discr_sol_d} satisfies the energy inequality
		\begin{align}
			\label{AGG:exws:eq:discr_energy_ineq}
			&E_\tot(\vel,\varphi)
			+ \int_\Omega \rho_k \frac{\abs{\vel-\vel_k}^2}{2} + \frac{\abs{\Grad\varphi-\Grad\varphi_k}^2}{2} \dx \nonumber\\
			&\quad+ h\int_\Omega \Str(\varphi_k,\D\vel) : \D\vel + \mj(\varphi_k) \abs{\Grad\mu}^2 + \mr(\varphi_k) \bigprt{\mu+\alpha\lambda}^2 \dx \nonumber\\
			&\leq E_\tot(\vel_k,\varphi_k).
		\end{align}
	\end{lemma}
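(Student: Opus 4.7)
The plan is to test each of the four time-discrete equations with a well-chosen function, add them up, and exploit convexity of $F_0$. Specifically, I would test the equivalent momentum equation \eqref{AGG:exws:eq:time-discr_sol_a_equiv} with $\bpsi=\vel$ (using an approximation argument since $\vel\in \rmH_0^1$ rather than $\rmC_0^\infty$), the divergence equation \eqref{AGG:exws:eq:time-discr_sol_b} with $\lambda$, the phase field equation \eqref{AGG:exws:eq:time-discr_sol_c} with $\mu$, and the chemical potential equation \eqref{AGG:exws:eq:time-discr_sol_d} with $-(\varphi-\varphi_k)/h$. The regularity assumed for the solution ($\vel\in \rmH_0^1$, $\mu\in \rmH^2_\Neum$, $\varphi\in\mathcal D(\partial E)$) makes all integrations by parts legal and kills the boundary contributions.

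The central algebraic step occurs in the momentum equation. Using $(\tilde\J\cdot\Grad)\vel\cdot\vel = \tilde\J\cdot\Grad\frac{|\vel|^2}{2}$ and $\nabla\cdot(\rho_k\vel\otimes\vel)\cdot\vel = \nabla\cdot(\rho_k\vel)\frac{|\vel|^2}{2} + \rho_k\vel\cdot\Grad\frac{|\vel|^2}{2}$ followed by integration by parts with $\vel|_{\partial\Omega}=0$, \emph{all} terms involving $\Grad|\vel|^2/2$ cancel against the auxiliary damping term $\bigl(\nabla\cdot\tilde\J - \tfrac{\rho-\rho_k}{h} - \nabla\cdot(\rho_k\vel)\bigr)\tfrac{|\vel|^2}{2}$ introduced in Remark~\ref{AGG:exws:rmk:eqiv_time_discr_NS}. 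What remains of the time-derivative term is reorganized via the discrete identity
\begin{align*}
\int_\Omega\frac{\rho\vel-\rho_k\vel_k}{h}\cdot\vel\dx - \int_\Omega\frac{\rho-\rho_k}{h}\frac{|\vel|^2}{2}\dx = \int_\Omega\frac{E_\kin\text{-density}(\vel)-E_\kin\text{-density}(\vel_k)}{h} + \frac{\rho_k|\vel-\vel_k|^2}{2h}\dx,
\end{align*}
so that testing with $\vel$ yields precisely the kinetic-energy difference, the positive remainder $\tfrac{1}{2h}\int_\Omega\rho_k|\vel-\vel_k|^2\dx$, the viscous dissipation $\int_\Omega\Str:\D\vel\dx$, the pressure work $-\int_\Omega\lambda\nabla\cdot\vel\dx$, and the cross term $-\int_\Omega\varphi_k\Grad\mu\cdot\vel\dx$.

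Testing the phase field equation with $\mu$ produces the diffusive dissipation $\int_\Omega\mj|\Grad\mu|^2\dx$, the transition term $\int_\Omega\mr(\mu+\alpha\lambda)\mu\dx$, the reaction contribution $\int_\Omega\tfrac{\varphi-\varphi_k}{h}\mu\dx$, and, after integrating by parts in $-\int_\Omega\nabla\cdot(\varphi_k\vel)\mu\dx$, exactly $\int_\Omega\varphi_k\vel\cdot\Grad\mu\dx$, which cancels the cross term from the momentum equation. Combining $-\int_\Omega\lambda\nabla\cdot\vel\dx$ with the divergence equation multiplied by $\lambda$ gives $\alpha\int_\Omega\mr(\mu+\alpha\lambda)\lambda\dx$, which together with $\int_\Omega\mr(\mu+\alpha\lambda)\mu\dx$ assembles into the sought dissipation $\int_\Omega\mr(\mu+\alpha\lambda)^2\dx$.

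Finally, testing \eqref{AGG:exws:eq:time-discr_sol_d} with $-(\varphi-\varphi_k)/h$ and integrating by parts using $\partial_\n\varphi|_{\partial\Omega}=0$ supplies $\int_\Omega\tfrac{\varphi-\varphi_k}{h}\mu\dx$ and, via the discrete identity $\Grad\varphi\cdot(\Grad\varphi-\Grad\varphi_k) = \tfrac12(|\Grad\varphi|^2-|\Grad\varphi_k|^2+|\Grad\varphi-\Grad\varphi_k|^2)$, produces the Dirichlet energy jump plus the non-negative remainder $\tfrac{1}{2h}\|\Grad\varphi-\Grad\varphi_k\|_{\rmL^2}^2$. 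The term $\kappa\tfrac{\varphi+\varphi_k}{2}\tfrac{\varphi-\varphi_k}{h}=\tfrac{\kappa}{2}\tfrac{\varphi^2-\varphi_k^2}{h}$ converts $F_0$ back to $F$ exactly, and the only inequality in the whole argument enters here, from convexity of $F_0$: $F_0'(\varphi)(\varphi-\varphi_k)\geq F_0(\varphi)-F_0(\varphi_k)$. Adding everything and multiplying by $h$ yields \eqref{AGG:exws:eq:discr_energy_ineq}. The main technical obstacle is the first part, i.e.\ verifying that the damping terms in Remark~\ref{AGG:exws:rmk:eqiv_time_discr_NS} produce the clean kinetic-energy difference; once that bookkeeping is done, the remaining cancellations and the convexity step are routine.
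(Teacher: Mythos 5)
Your proposal is correct and follows essentially the same route as the paper's proof: testing the equivalent momentum equation \eqref{AGG:exws:eq:time-discr_sol_a_equiv} with $\vel$, \eqref{AGG:exws:eq:time-discr_sol_b} with $\lambda$, \eqref{AGG:exws:eq:time-discr_sol_c} with $\mu$ and \eqref{AGG:exws:eq:time-discr_sol_d} with $\pm\frac{\varphi-\varphi_k}{h}$, exploiting the identity $\mathbf a\cdot(\mathbf a-\mathbf b)=\frac{|\mathbf a|^2}{2}-\frac{|\mathbf b|^2}{2}+\frac{|\mathbf a-\mathbf b|^2}{2}$ for the kinetic and Dirichlet energy jumps, the cancellation of the $\Grad\frac{|\vel|^2}{2}$ terms against the extra terms from Remark~\ref{AGG:exws:rmk:eqiv_time_discr_NS}, and the convexity of $F_0$ as the single inequality. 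The bookkeeping you flag as the main obstacle is carried out in the paper exactly as you describe.
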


	\begin{proof}
		The strategy of this proof is to test the time-discrete equations suitably. Before testing \eqref{AGG:exws:eq:time-discr_sol_a_equiv} (which is equivalent to \eqref{AGG:exws:eq:time-discr_sol_a}) with $\vel$, we make some observations in order to simplify the resulting equation. First, by means of the divergence theorem, we infer
		\begin{align*}
			\int_\Omega \Bigprt{\nabla\cdot\tilde\J \frac\vel2 + \bigprt{\tilde\J\cdot\nabla}\vel} \cdot \vel \dx
			= \int_\Omega \nabla\cdot \biggprt{\tilde\J \frac{\abs{\vel}^2}{2}} \dx
			= 0
		\end{align*}
		as well as
		\begin{gather*}
			\int_\Omega \Bigprt{\nabla\cdot\bigprt{\rho_k\vel\otimes\vel} - \nabla\cdot(\rho_k\vel)\frac\vel2} \cdot \vel \dx \\
			= \int_\Omega \nabla\cdot\bigprt{\rho_k\vel} \abs{\vel}^2 + \rho_k\vel \cdot \nabla \biggprt{\frac{\abs{\vel}^2}{2}} - \nabla\cdot\bigprt{\rho_k\vel} \frac{\abs{\vel}^2}{2} \dx 
			= \int_\Omega \nabla\cdot \biggprt{\rho_k\vel \frac{\abs{\vel}^2}{2}} \dx
			= 0.
		\end{gather*}
		Moreover, the simple algebraic identity
		\begin{align*}
			\mathbf{a} \cdot (\mathbf{a}-\mathbf{b}) 
			= \frac{\abs{\mathbf{a}}^2}{2} - \frac{\abs{\mathbf{b}}^2}{2} + \frac{\abs{\mathbf{a}-\mathbf{b}}^2}{2}
			\quad\text{for } \mathbf{a},\mathbf{b}\in\R^d
		\end{align*}
		enables us to calculate
		\begin{align*}
			(\rho\vel-\rho_k\vel_k)\cdot\vel
			&= (\rho-\rho_k)\abs{\vel}^2 + \rho_k(\vel-\vel_k)\cdot\vel \\
			&= (\rho-\rho_k)\abs{\vel}^2 + \rho_k\biggprt{\frac{\abs{\vel}^2}{2} - \frac{\abs{\vel_k}^2}{2}} + \rho_k \frac{\abs{\vel-\vel_k}^2}{2} \\
			&= \biggprt{\rho\frac{\abs{\vel}^2}{2} - \rho_k\frac{\abs{\vel_k}^2}{2}} + (\rho-\rho_k)\frac{\abs{\vel}^2}{2}
			+ \rho_k \frac{\abs{\vel-\vel_k}^2}{2}.
		\end{align*}
		Now, using these equations to test \eqref{AGG:exws:eq:time-discr_sol_a} or \eqref{AGG:exws:eq:time-discr_sol_a_equiv}, respectively, with $\vel$  leads us to
		\begin{align*}
			0
			&= \int_\Omega \frac{\rho\abs{\vel}^2 - \rho_k\abs{\vel_k}^2}{2h}
			+ \rho_k \frac{\abs{\vel-\vel_k}^2}{2h}
			+ \Str(\varphi_k,\D\vel) : \D\vel 
			 - \lambda \nabla\cdot\vel + \varphi_k\Grad\mu\cdot\vel \dx.
		\end{align*}
		Testing \eqref{AGG:exws:eq:time-discr_sol_b} with $\lambda$ further gives
		\begin{align*}
			0
			= \int_\Omega \lambda\nabla\cdot\vel 
			+ \mr(\varphi_k) \bigprt{\mu\alpha\lambda+\alpha^2\lambda^2} \dx,
		\end{align*}
		while testing \eqref{AGG:exws:eq:time-discr_sol_c} with $\mu$ results in
		\begin{align*}
			0
			= \int_\Omega \frac{\varphi-\varphi_k}{h}\mu - \varphi_k\Grad\mu\cdot\vel
			+ \mj(\varphi_k) \abs{\Grad\mu}^2 + \mr(\varphi_k) \bigprt{\mu^2+\mu\alpha\lambda} \dx.
		\end{align*}
		Eventually, we choose $\frac{\varphi-\varphi_k}{h}$ as a test function in \eqref{AGG:exws:eq:time-discr_sol_d} and invoke the aforementioned algebraic identity to obtain
		\begin{align*}
			0
			&= \int_\Omega \Grad\varphi \cdot \frac{\Grad\varphi-\Grad\varphi_k}{h} + F_0'(\varphi)\frac{\varphi-\varphi_k}{h} - \mu\frac{\varphi-\varphi_k}{h} - \kappa\frac{\varphi+\varphi_k}{2} \frac{\varphi-\varphi_k}{h} \dx \\
			&= \int_\Omega \frac1h \biggprt{ \frac{\abs{\Grad\varphi}^2}{2} - \frac{\abs{\Grad\varphi_k}^2}{2} + \frac{\abs{\Grad\varphi-\Grad\varphi_k}^2}{2} } 
			+ F_0'(\varphi)\frac{\varphi-\varphi_k}{h} - \mu\frac{\varphi-\varphi_k}{h} - \kappa\frac{\varphi^2-\varphi_k^2}{2h} \dx.
		\end{align*}
		Summing these four identities and also taking into account the convexity of $F_0$ to estimate~$F_0'(\varphi)\prt{\varphi-\varphi_k} \geq F_0(\varphi)-F_0(\varphi_k)$ we conclude
		\begin{align*}
			0
			&\geq \int_\Omega \frac{\rho\abs{\vel}^2 - \rho_k\abs{\vel_k}^2}{2h}
			+ \rho_k \frac{\abs{\vel-\vel_k}^2}{2h} \dx \\
			&\quad + \int_\Omega \Str(\varphi_k,\D\vel) : \D\vel
			+ \mj(\varphi_k) \abs{\Grad\mu}^2 + \mr(\varphi_k) \bigprt{\mu+\alpha\lambda}^2 \dx \\ 
			&\quad + \int_\Omega \frac1h \bigprt{F_0(\varphi)-F_0(\varphi_k)} - \kappa\frac{\varphi^2-\varphi_k^2}{2h} 
			+ \frac1h \biggprt{ \frac{\abs{\Grad\varphi}^2}{2} - \frac{\abs{\Grad\varphi_k}^2}{2} + \frac{\abs{\Grad\varphi-\Grad\varphi_k}^2}{2} } \dx.
		\end{align*}
		Recalling the definition of $E_\tot(\vel,\varphi)$ we have thus established the time-discrete energy inequality~\eqref{AGG:exws:eq:discr_energy_ineq} as claimed.
	\end{proof}

	\begin{lemma}
		\label{AGG:exws:lemma:ex_time-discr}
		Let $\vel_k\in \rmL^2(\Omega)^d$ and $\varphi_k\in \rmH^2(\Omega)$ be given, and set $\rho_k\coloneqq b_+ + b_- \varphi_k$. Then, there exists a solution $(\vel,\lambda,\mu,\varphi) \in \rmH_0^1(\Omega)^d \times  \rmL^2(\Omega) \times \rmH^2_\Neum(\Omega) \times  \mathcal D(\partial E)$ to the time-discrete system~\eqref{AGG:exws:eq:time-discr_sol_a}--\eqref{AGG:exws:eq:time-discr_sol_d}.
	\end{lemma}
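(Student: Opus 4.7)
The plan is to prove Lemma \ref{AGG:exws:lemma:ex_time-discr} by a Leray--Schauder fixed-point argument in the spirit of \cite{AbelsDepnerGarcke2012}. I introduce a homotopy parameter $\sigma \in [0,1]$ and define a compact map $\mathcal T_\sigma : K \to K$, continuous in $\sigma$, on the closed convex set
\begin{align*}
  K = \{(\tilde\vel, \tilde\varphi) \in \rmL^2(\Omega)^d \times \rmH^1(\Omega) : \abs{\tilde\varphi} \leq 1 \text{ a.e.~in } \Omega,\ \mean{\tilde\varphi} = \mean{\varphi_k}\},
\end{align*}
as follows. Given $(\tilde\vel, \tilde\varphi) \in K$ and $\sigma \in [0,1]$, I solve the partially linearized version of \eqref{AGG:exws:eq:time-discr_sol_a}--\eqref{AGG:exws:eq:time-discr_sol_d} in which the quadratic convective term $\nabla\cdot(\rho_k\vel\otimes\vel)$ is replaced by $\sigma\nabla\cdot(\rho_k\tilde\vel\otimes\vel)$ and the density $\rho$ in the discrete time derivative is frozen to $b_+ + b_-\tilde\varphi$. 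Setting $\mathcal T_\sigma(\tilde\vel, \tilde\varphi) := (\vel, \varphi)$ for the $(\vel,\varphi)$-component of the resulting solution, fixed points of $\mathcal T_1$ are exactly solutions of the full time-discrete system.

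To resolve this partially linearized system, I would treat equations \eqref{AGG:exws:eq:time-discr_sol_c}--\eqref{AGG:exws:eq:time-discr_sol_d} as a coupled Cahn--Hilliard pair for $(\mu, \varphi)$, invoking Proposition \ref{prelim:prop:subgrad} to handle the singular potential through the maximal monotone subgradient $\partial E$: for fixed $\vel, \lambda$, testing against $\mu$ and against $(\varphi-\varphi_k)/h$ yields a coercive elliptic structure—stemming from the $1/h$ term—that delivers existence together with Lipschitz dependence on the data. Equations \eqref{AGG:exws:eq:time-discr_sol_a}--\eqref{AGG:exws:eq:time-discr_sol_b} form a linear Stokes-type system for $(\vel,\lambda)$ with the pressure-stabilized divergence constraint $\nabla\cdot\vel + \alpha^2\mr(\varphi_k)\lambda = -\alpha\mr(\varphi_k)\mu$, solvable by Lax--Milgram in a product space thanks to the lower bounds in \eqref{ass:coeffs}. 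A contraction argument on $(\vel, \lambda, \mu) \in \rmL^2(\Omega)^d \times \rmL^2(\Omega) \times \rmL^2(\Omega)$ closes the inner loop; elliptic regularity and \eqref{eq:subgrad_without:estimate_H2} then place $\vel$ in $\rmH^1_0(\Omega)^d$ and $\varphi$ in $\rmH^2(\Omega)$, ensuring compactness of $\mathcal T_\sigma$ into the ambient spaces of $K$.

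For the uniform a priori bound on any fixed point $(\vel, \varphi) = \mathcal T_\sigma(\vel, \varphi)$, I repeat the testing scheme from the proof of Lemma \ref{AGG:exws:lemma:discr_energy_est} verbatim: the algebraic cancellations employed there exploit only the divergence form of $\nabla\cdot(\rho_k\vel\otimes\vel)$ and therefore survive multiplication by $\sigma$. This reproduces the discrete energy inequality \eqref{AGG:exws:eq:discr_energy_ineq} with the $\sigma$-independent right-hand side $E_\tot(\vel_k,\varphi_k)$, which combined with Lemma \ref{AGG:exws:lemma:estimates} yields uniform bounds on $(\vel, \varphi)$ in $\rmH^1_0(\Omega)^d \times \rmH^2(\Omega)$ for all $\sigma \in [0,1]$. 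At $\sigma = 0$ the convective term vanishes and $\mathcal T_0$ possesses a readily constructed fixed point, so the Leray--Schauder principle delivers a fixed point of $\mathcal T_1$, which is the desired solution.

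The main obstacle is the solvability of the inner linearized system: the stabilized Stokes equation, the transport-diffusion equation for $\mu$, and the subgradient equation for $\varphi$ are strongly coupled, and the singular character of $F_0'$ rules out a direct reduction to a single variational problem. The maximal monotonicity framework of Proposition \ref{prelim:prop:subgrad}, together with the uniform lower bounds on mobilities and viscosities from \eqref{ass:coeffs} and the smallness of $h$, is precisely what makes this coupling tractable and enables the inner contraction step.
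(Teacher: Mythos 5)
Your overall strategy (Leray--Schauder with a compact solution map for a linearized system, energy-type testing for the a priori bound) is the same as the paper's, but two of your key steps have genuine gaps.

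First, the inner ``contraction argument'' used to solve the partially linearized system is not justified. The couplings between the Stokes block and the Cahn--Hilliard block --- the term $-\varphi_k\Grad\mu$ in the momentum equation, the transport term $\nabla\cdot(\varphi_k\vel)$ and the term $\mr(\varphi_k)\alpha\lambda$ in the phase-field equation --- carry no small parameter: the resulting Lipschitz constants involve $K_*^{-1}$, $\alpha$ and $\norm{\varphi_k}_{\rmL^\infty}$ and are not less than one, and the $1/h$ coercivity does not help because these couplings do not act through the $\varphi$-variable. The paper's resolution is structural rather than perturbative: it assembles $(\vel,\lambda,\mu)$ into a \emph{single} bilinear form $\mathcal A_k$ on $\rmH^1_0(\Omega)^d\times\rmL^2(\Omega)\times\rmH^1(\Omega)$, where the cross terms $\int_\Omega\varphi_k\Grad\mu\cdot\vel\dx$ and $\int_\Omega\mu\,\nabla\cdot(\varphi_k\vel)\dx$ cancel upon diagonal testing and the $\lambda$- and $\mu$-couplings recombine into $\mr(\varphi_k)(\mu+\alpha\lambda)^2\geq0$; coercivity and Lax--Milgram then solve the whole block at once. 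Without exploiting this skew-symmetric structure, your inner loop does not close.

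Second, your homotopy multiplies only the convective term by $\sigma$, which destroys the cancellation underlying the discrete energy inequality. At a fixed point of $\mathcal T_\sigma$ the testing with $\vel$ produces
\begin{align*}
	(\sigma-1)\int_\Omega \nabla\cdot(\rho_k\vel)\,\frac{\abs{\vel}^2}{2}\dx,
\end{align*}
because the compensating contribution $-\nabla\cdot(\rho_k\vel)\tfrac{\vel}{2}$ arising from the mass-flux rewriting (Remark~\ref{AGG:exws:rmk:eqiv_time_discr_NS}) is not scaled by $\sigma$ in your scheme. This leftover is cubic in $\vel$, has no sign, and cannot be absorbed by the quadratic dissipation $h\int_\Omega\Str(\varphi_k,\D\vel):\D\vel\dx$, so the claimed $\sigma$-uniform bound does not follow ``verbatim''. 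The paper avoids this by placing \emph{all} of the inertial and flux terms (grouped exactly as in \eqref{AGG:exws:eq:time-discr_sol_a_equiv}) into the operator $\mathcal F_k$ and scaling the entire right-hand side by the homotopy parameter $\vartheta$, so every cancellation survives with a common factor. A further, more minor, point: merely knowing that $\mathcal T_0$ ``possesses a fixed point'' is not enough for homotopy invariance of the fixed-point index; you would need $\mathcal T_0$ to be, e.g., a constant map, which your construction (where $\mathcal T_0$ still depends on $\tilde\varphi$ through the frozen density) does not provide.
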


	\begin{proof}
		The central concept of this proof hinges on the application of the Leray--Schauder principle. For the preliminary setup, we work with two operators $\mathcal{L}_k,\mathcal{F}_k \colon X\to Y$ where
		\begin{align*}
			X &\coloneqq \rmH_0^1(\Omega)^d \times \rmL^2(\Omega) \times \rmH^2_\Neum(\Omega) \times \mathcal{D}(\partial E), \\
			Y &\coloneqq \rmH^{-1}(\Omega)^d \times \rmL^2(\Omega) \times \rmL^2(\Omega) \times \rmL^2(\Omega).
		\end{align*}
		These are constructed such that a quadruple $\z = (\vel,\lambda,\mu,\varphi) \in X$ solves system~\eqref{AGG:exws:eq:time-discr_sol_a}--\eqref{AGG:exws:eq:time-discr_sol_d} if and only if the identity
		\begin{align*}
			\mathcal{L}_k(\z) = \mathcal{F}_k(\z)
		\end{align*}
		is satisfied for suitable operators $\mathcal{L}_k$ and $\mathcal{F}_k$. After first introducing an operator
		\begin{align*}
			\langle L_k(\vel,\lambda,\mu), \bpsi \rangle 
			\coloneqq \int_\Omega \Str(\varphi_k,\D\vel) : \D\bpsi  - \lambda \nabla\cdot\bpsi + \varphi_k\Grad\mu\cdot\bpsi \dx
			\qquad \text{for all }\bpsi\in \rmH_0^1(\Omega)^d,
		\end{align*}
		we define, for $\z = (\vel,\lambda,\mu,\varphi) \in X$, the aforementioned operators $\mathcal{L}_k,\mathcal{F}_k \colon X\to Y$ by
		\begin{align*}
			\mathcal{L}_k(\z) \coloneqq
			\begin{pmatrix}
				L_k(\vel,\lambda,\mu) \\[1ex]
				\nabla\cdot\vel + \alpha \mr(\varphi_k) \bigprt{\mu+\alpha\lambda} \\[1ex]
				\nabla\cdot (\varphi_k\vel) -\nabla\cdot\bigprt{\mj(\varphi_k) \Grad\mu} + \mr(\varphi_k) \bigprt{\mu+\alpha\lambda} + \int_\Omega \mu \dy \\[1ex]
				\varphi + \partial E(\varphi)
			\end{pmatrix}
		\end{align*}
		and 
		\begin{align*}
			\mathcal{F}_k(\z) \coloneqq
			\begin{pmatrix}
				-\frac{\rho\vel-\rho_k\vel_k}{h} - \nabla\cdot(\rho_k\vel\otimes\vel) - \bigprt{\nabla\cdot\tilde\J -\frac{\rho-\rho_k}{h} - \nabla\cdot(\rho_k\vel)} \frac{\vel}{2} - \bigprt{\tilde\J\cdot\nabla}\vel \\[1ex]
				0 \\[1ex]
				-\frac{\varphi-\varphi_k}{h} + \int_\Omega \mu \dy \\[1ex]
				\varphi + \mu + \kappa\frac{\varphi+\varphi_k}{2}
			\end{pmatrix}.
		\end{align*}
		Our primary goal now is to show that the operator $\mathcal{L}_k\colon X\to Y$ is invertible. To this end, we initially focus on the first three lines of $\mathcal{L}_k$ to define the bilinear form $\mathcal{A}_k$ on the space~$\rmH_0^1(\Omega)^d \times \rmL^2(\Omega) \times \rmH^1(\Omega)$ by
		\begin{gather*}
			\mathcal{A}_k \bigprt{(\vel,\lambda,\mu), (\bpsi,\xi,\zeta)}
			\coloneqq \langle L_k(\vel,\lambda,\mu), \bpsi \rangle 
			+ \int_\Omega \xi \nabla\cdot\vel + \alpha \mr(\varphi_k) \bigprt{\mu+\alpha\lambda} \xi \dx \\
			\quad+ \int_\Omega \zeta \nabla\cdot (\varphi_k\vel) + \mj(\varphi_k) \Grad\mu \cdot\Grad\zeta + \mr(\varphi_k) \bigprt{\mu+\alpha\lambda} \zeta + \int_\Omega \mu \dy \, \zeta \dx.
		\end{gather*}
		We observe that $\mathcal{A}_k$ is both bounded and coercive as
		\begin{align*}
			&\mathcal{A}_k \bigprt{(\vel,\lambda,\mu), (\vel,\lambda,\mu)} 
			= \int_\Omega \Str(\varphi_k,\D\vel) : \D\vel \dx 
			+ \biggprt{\int_\Omega \mu \dx}^2 \\
			&\quad+ \int_\Omega \alpha \mr(\varphi_k) \bigprt{\mu+\alpha\lambda} \lambda \dx 
			+ \int_\Omega \mj(\varphi_k)\abs{\Grad\mu}^2 + \mr(\varphi_k) \bigprt{\mu+\alpha\lambda} \mu \dx \\
			& \geq C \biggprt{ \norm{\vel}_{\rmH^1(\Omega)}^2 + \biggprt{\int_\Omega \mu \dx}^2 + \norm{\Grad\mu}_{\rmL^2(\Omega)}^2 + \norm{\mu+\alpha\lambda}_{\rmL^2(\Omega)}^2} \\
			& \geq C \Bigprt{ \norm{\vel}_{\rmH^1(\Omega)}^2 + \norm{\mu}_{\rmH^1(\Omega)}^2 + \norm{\mu+\alpha\lambda}_{\rmL^2(\Omega)}^2} \\
			&\geq C \Bigprt{ \norm{\vel}_{\rmH^1(\Omega)}^2 + \norm{\lambda}_{\rmL^2(\Omega)}^2 + \norm{\mu}_{\rmH^1(\Omega)}^2}
		\end{align*}
		with a constant $C>0$ due to Korn's and Poincaré's inequalities and the positive lower bound on the coefficients $\nu$, $\eta$, $\mj$ and $\mr$. Thus, the Lax--Milgram theorem ensures that for any right-hand side $(\mathbf g_1,g_2,g_3)\in \rmH^{-1}(\Omega)^d \times \rmL^2(\Omega) \times \bigprt{\rmH^1(\Omega)}'$, there exists a unique triple~$(\vel,\lambda,\mu) \in \rmH_0^1(\Omega)^d \times \rmL^2(\Omega) \times \rmH^1(\Omega)$ fulfilling
		\begin{align*}
			\mathcal{A}_k \bigprt{(\vel,\lambda,\mu), (\bpsi,\xi,\zeta)} 
			= \langle \mathbf g_1, \bpsi \rangle_{\rmH_0^1(\Omega)} +  \langle g_2, \xi \rangle_{\rmL^2(\Omega)} +  \langle g_3, \zeta \rangle_{\rmH^1(\Omega)}
		\end{align*}
		for all $(\bpsi,\xi,\zeta)\in \rmH_0^1(\Omega)^d \times \rmL^2(\Omega) \times \rmH^1(\Omega)$. In particular, for given $g_3\in \rmL^2(\Omega)$, we see that~$\mu\in \rmH^1(\Omega)$ is a weak solution to the elliptic equation
		\begin{alignat*}{2}
			-\nabla\cdot\bigprt{\mj(\varphi_k) \Grad\mu} + \int_\Omega \mu \dy + \mr(\varphi_k) \mu
			&= g_3 - \nabla\cdot (\varphi_k\vel) - \mr(\varphi_k) \alpha\lambda
		\end{alignat*}
		with a homogeneous Neumann boundary condition. In order to prove that $\mu$ is even of regularity $\rmH^2_\Neum(\Omega)$, we note that $\mu$ additionally solves, in a weak sense, the equation
		\begin{alignat*}{2}
			\Lap\mu 
			= -\bigprt{\mj(\varphi_k)}^{\!-1} 
			\Bigprt{ \Grad \bigprt{\mj(\varphi_k)} \cdot \Grad\mu + \int_\Omega \mu \dy + \mr(\varphi_k) \mu
			 - g_3 - \nabla\cdot (\varphi_k\vel) - \mr(\varphi_k) \alpha\lambda }
		\end{alignat*}
		with the same boundary condition as above. Since it holds $\varphi_k\in \rmH^2(\Omega)$ with $\abs{\varphi_k}\leq1$ a.e.~in $\Omega$, we deduce by the chain rule that $\Grad \prt{\mj(\varphi_k)}\in \rmL^6(\Omega)$, and therefore, we conclude~$\Grad \prt{\mj(\varphi_k)} \cdot \Grad\mu \in \rmL^{\frac32}(\Omega)$. With the remaining terms on the right-hand side belonging to $\rmL^2(\Omega)$, it follows that for some function $g \in \rmL^{\frac32}(\Omega)$, the equation takes the form
		\begin{align*}
			\Lap\mu =  g \in \rmL^{\frac32}(\Omega).
		\end{align*}
		Hence, elliptic regularity theory yields $\mu\in \rmW^{2,\frac32}(\Omega)$, and in particular $\Grad\mu\in \rmW^{1,\frac32}(\Omega)$ which embeds into $\rmL^3(\Omega)$. This implies that the product $\Grad \prt{\mj(\varphi_k)} \cdot \Grad\mu$ is even in $\rmL^2(\Omega)$ and thus leads to
		\begin{align*}
			\Lap\mu =  g \in \rmL^2(\Omega).
		\end{align*}
		We consequently obtain the regularity $\mu\in \rmH^2(\Omega)$ along with the estimate
		\begin{align}
			\label{AGG:exws:eq:mu_H2}
			\norm{\mu}_{\rmH^2(\Omega)}
			\leq C\norm{g}_{\rmL^2(\Omega)}
			\leq C_k \bigprt{ \norm{\vel}_{\rmH^1(\Omega)} + \norm{\lambda}_{\rmL^2(\Omega)} + \norm{\mu}_{\rmH^1(\Omega)} + \norm{g_3}_{\rmL^2(\Omega)} }.
		\end{align}
		
		Examining the fourth line in $\mathcal{L}_k$ in a next step, $\partial E$ being a maximal monotone operator reveals that
		\begin{align*}
			I+\partial E \colon \mathcal{D}(\partial E) \to \rmL^2(\Omega)
		\end{align*}
		is invertible. The inverse operator, considered as a mapping 
		\begin{align*}
			\prt{I+\partial E}^{-1} \colon \rmL^2(\Omega)\to \rmH^{2-s}(\Omega), \qquad s\in\bigprt{0,\tfrac14},
		\end{align*}
		is continuous (and even compact) which can be verified using \eqref{eq:subgrad_without:subgradient} and \eqref{eq:subgrad_without:estimate_H2}. For details, we refer to the proof of Lemma~4.3 in \cite{AbelsDepnerGarcke2012}.
		
		Altogether, we have now shown that $\mathcal{L}_k\colon X\to Y$ is invertible with an inverse operator~$\mathcal{L}_k^{-1}\colon Y\to X$. Our next objective is to identify Banach spaces $\tilde X$, $\tilde Y$ such that we have a continuous embedding $X\hookrightarrow\tilde X$ and a compact embedding $\tilde Y \hookrightarrow Y$. This is ensured by introducing
		\begin{align*}
			\tilde X &\coloneqq \rmH_0^1(\Omega)^d \times  \rmL^2(\Omega) \times \rmH^2_\Neum(\Omega) \times \rmH^{2-s}(\Omega), \qquad s\in\bigprt{0,\tfrac14}, \\[1ex]
			\tilde Y &\coloneqq \rmL^{\frac32}(\Omega)^d \times \rmH^1(\Omega) \times \rmW^{1,\frac32}(\Omega) \times \rmH^1(\Omega).
		\end{align*}
		Eventually, $\mathcal{L}_k^{-1}\colon \tilde Y\to \tilde X$ is a compact operator.
		
		We proceed by showing that $\mathcal{F}_k \colon\tilde X\to\tilde Y$ is continuous and maps bounded sets into bounded sets. To this end, we analyze each individual term in $\mathcal{F}_k(\z)$ for $\z=(\vel,\lambda,\mu,\varphi) \in\tilde X$ similarly to \cite{AbelsDepnerGarcke2012} and achieve the following bounds:
		\begin{enumerate}[label=(\roman*)]
			\setlength\itemsep{0.8ex}
			\item $\norm{\rho\vel}_{\rmL^{\frac32}} \leq C\norm{\vel}_{\rmH^1} \bigprt{\norm{\varphi}_{\rmL^2} +1}$. \\
			We have the embedding $\vel\in \rmH^1\hookrightarrow \rmL^6$, and $\rho$ depends affine linearly on $\varphi\in \rmL^2$.
			\item $\norm{\nabla\cdot(\rho_k\vel\otimes\vel)}_{\rmL^{\frac32}} \leq C_k\norm{\vel}_{\rmH^1}^2$. \\
			Since $\rho_k\in \rmH^2$, the occurring terms of the form $\rho_k\partial_l\vel_i\vel_j$ are a product of functions in~$\rmL^\infty$, $\rmL^2$ and $\rmL^6$ whereas the products of the type $\partial_l\rho_k\vel_i\vel_j$ consist of three functions in $\rmL^6$.
			\item $\norm{\nabla\cdot\tilde\J \vel}_{\rmL^{\frac32}} \leq C\norm{\vel}_{\rmH^1}\norm{\mu}_{\rmH^2}$. \\
			In view of $\tilde\J=-\frac{\tilde\rho_+-\tilde\rho_-}{2} \mj(\varphi_k) \Grad\mu$, the resulting expressions include terms such as~$\mj'(\varphi_k) \partial_i\varphi_k \partial_j\mu \vel_l$ which involve multiplied functions in $\rmL^\infty$, $\rmL^6$, $\rmL^6$ and $\rmL^6$, as well as products $\mj(\varphi_k) \partial_i\partial_j\mu \vel_l$ of functions in $\rmL^\infty$, $\rmL^2$ and $\rmL^6$.
			\item $\norm{\nabla\cdot(\varphi_k\vel)\vel}_{\rmL^{\frac32}} \leq C_k\norm{\vel}_{\rmH^1}^2$. \\
			On the one hand, arising terms of the type $\partial_i\varphi_k \vel_j \vel_l$ are products of functions in~$\rmH^{1-s}\hookrightarrow \rmL^3$, $\rmL^6$ and $\rmL^6$. On the other hand, terms of the form $\varphi_k \partial_i\vel_j \vel_l$ contain a product of functions in $\rmL^\infty$, $\rmL^2$ and $\rmL^6$.
			\item $\norm{(\tilde\J\cdot\nabla)\vel}_{\rmL^{\frac32}} \leq C\norm{\vel}_{\rmH^1}\norm{\mu}_{\rmH^2}$. \\
			This expression consists of products $\mj(\varphi_k) \partial_i\mu \partial_j\vel_l$ with factors in $\rmL^\infty$, $\rmL^6$ and $\rmL^2$.
			\item The term $-\frac{\varphi-\varphi_k}{h} + \int_\Omega \mu \dy$ in the third line of $\mathcal{F}_k$ is obviously bounded in $\rmW^{1,\frac32}$.
			\item It is also clear that the fourth line term $\varphi + \mu + \kappa\frac{\varphi+\varphi_k}{2}$ is bounded in $\rmH^1$.
		\end{enumerate}
		
		Now, we aim to apply the Leray--Schauder principle (see Theorem 6.A in \cite{Zeidler1992}, for instance) on the Banach space $\tilde Y$. Finding a solution $\z=(\vel,\lambda,\mu,\varphi) \in X$ to the time-discrete system~\eqref{AGG:exws:eq:time-discr_sol_a}--\eqref{AGG:exws:eq:time-discr_sol_d}, i.e., a solution of $\mathcal{L}_k(\z) = \mathcal{F}_k(\z)$, is equivalent to solving the equation 
		\begin{align*}
			\f = \mathcal{K}_k(\f),
			\qquad\text{where } \mathcal{K}_k \coloneqq \mathcal{F}_k \circ \mathcal{L}_k^{-1} \text{ and } \f=  \mathcal{L}_k(\z). 
		\end{align*}
		Thus, our goal is to establish the existence of a fixed point of $\mathcal{K}_k$. Since $\mathcal{L}_k^{-1}\colon \tilde Y\to \tilde X$ is compact and $\mathcal{F}_k \colon \tilde X \to \tilde Y$ is continuous, $\mathcal{K}_k \colon \tilde Y \to \tilde Y$ is a compact operator. In order to obtain a fixed point of $\mathcal{K}_k$, the Leray--Schauder principle additionally requires to show a further condition: there exists a bound $R>0$ such that the following holds:
		\begin{align}
			\label{AGG:exws:eq:Leray-Schauder_cond}
			\text{If } \f\in \tilde Y \quad\text{and}\quad 0\leq\vartheta\leq1 \text{ fulfills } \f=\vartheta\mathcal{K}_k(\f), \quad\text{then } \norm{\f}_{\tilde Y} \leq R.
		\end{align}
		So let $\f\in\tilde Y$ and $0\leq\vartheta\leq1$ with $\f=\vartheta\mathcal{K}_k(\f)$ be given. Defining $\z=\mathcal{L}_k^{-1}(\f)$ this is equivalent to the equation
		\begin{align*}
			\mathcal{L}_k(\z) = \vartheta\mathcal{F}_k(\z),
		\end{align*}
		which corresponds to the following weak formulation:
		\begin{subequations}
		\begin{align}
			&\int_\Omega \vartheta \frac{\rho\vel-\rho_k\vel_k}{h} \cdot\bpsi 
			+ \vartheta \nabla\cdot(\rho_k\vel\otimes\vel) \cdot \bpsi
			+ \vartheta \Bigprt{\nabla\cdot\tilde\J -\frac{\rho-\rho_k}{h} - \nabla\cdot(\rho_k\vel)} \frac{\vel}{2} \cdot \bpsi \dx \nonumber\\
			&\quad+ \int_\Omega \vartheta \bigprt{\tilde\J\cdot\nabla}\vel \cdot \bpsi
			\dx + \Str(\varphi_k,\D\vel) : \Grad\bpsi - \lambda \nabla\cdot\bpsi \dx 
			= -\int_\Omega \varphi_k\Grad\mu \cdot \bpsi \dx
			\label{AGG:exws:eq:time-discr_sol_LS_a}
		\end{align}
		for all $\bpsi\in \rmH_0^1(\Omega)^d$, and
		\begin{alignat}{2}
			\nabla\cdot\vel 
			&= -\alpha \mr(\varphi_k) \bigprt{\mu+\alpha\lambda}, \label{AGG:exws:eq:time-discr_sol_LS_b}\\
			\vartheta \frac{\varphi-\varphi_k}{h} + \nabla\cdot(\varphi_k\vel) - \vartheta \int_\Omega \mu \dy 
			&= \nabla\cdot\bigprt{\mj(\varphi_k) \Grad\mu} - \mr(\varphi_k) \bigprt{\mu+\alpha\lambda} - \int_\Omega \mu \dy, \label{AGG:exws:eq:time-discr_sol_LS_c}\\
			\vartheta \varphi + \vartheta \mu + \vartheta \kappa\frac{\varphi+\varphi_k}{2} 
			&= \varphi + F_0'(\varphi) -\Lap\varphi, \label{AGG:exws:eq:time-discr_sol_LS_d}
		\end{alignat}
		\end{subequations}
		where the last three equations hold a.e.~in $\Omega$. Proceeding analogously to the proof of Lemma~\ref{AGG:exws:lemma:discr_energy_est}, we now test \eqref{AGG:exws:eq:time-discr_sol_LS_a} with $\vel$, \eqref{AGG:exws:eq:time-discr_sol_LS_b} with $\lambda$, \eqref{AGG:exws:eq:time-discr_sol_LS_c} with $\mu$ and \eqref{AGG:exws:eq:time-discr_sol_LS_d} with $\frac{\varphi-\varphi_k}{h}$.~As in the derivation there, summing the resulting equations and using the convexity of $F_0$~reveals
		\begin{align*}
			0
			&\geq \int_\Omega \vartheta \frac1h \biggprt{ \frac{\rho\abs{\vel}^2}{2} - \frac{\rho_k\abs{\vel_k}^2}{2}
				+ \frac{\rho_k\abs{\vel-\vel_k}^2}{2} } \dx \\
			&\quad + \int_\Omega \Str(\varphi_k,\D\vel) : \D\vel
			+ \mj(\varphi_k) \abs{\Grad\mu}^2 + \mr(\varphi_k) \bigprt{\mu+\alpha\lambda}^2 \dx \\
			&\quad + \int_\Omega \frac1h \biggprt{ \frac{\abs{\Grad\varphi}^2}{2} - \frac{\abs{\Grad\varphi_k}^2}{2} + \frac{\abs{\Grad\varphi-\Grad\varphi_k}^2}{2} }
			+ \frac1h \bigprt{ F_0(\varphi) - F_0(\varphi_k) } - \vartheta \kappa\frac{\varphi^2-\varphi_k^2}{2h} \dx \\
			&\quad + \int_\Omega (1-\vartheta) \frac1h \biggprt{ \frac{\varphi^2}{2} - \frac{\varphi_k^2}{2} + \frac{\prt{\varphi-\varphi_k}^2}{2} } \dx
			+ (1-\vartheta) \biggprt{ \int_\Omega \mu \dy}^2.
		\end{align*}
		Given that $\z=\mathcal{L}_k^{-1}(\f) \in X$, it follows that $\varphi$ belongs to the space $\mathcal{D}(\partial E)$, and in particular that $\abs{\varphi}\leq1$ a.e.~in $\Omega$, which implies $\rho\geq0$. Furthermore, after taking into account~the nonnegativity of the terms $ \rho\abs{\vel}^2$, $\rho_k \abs{\vel-\vel_k}^2$, $\varphi^2$, $\prt{\varphi-\varphi_k}^2$ and $\abs{\Grad\varphi-\Grad\varphi_k}^2$, we employ~$\vartheta,(1-\vartheta)\leq1$, the relation~$F_0(\varphi)-\kappa\frac{\varphi^2}{2} = F(\varphi)$ as well as the estimate $-\vartheta \kappa\varphi_k^2 \leq \abs{\kappa}\varphi_k^2 $. Since $\abs{\varphi}\leq1$ a.e.~in $\Omega$ ensures that the term $\int_\Omega F(\varphi) \dx$ is bounded, we may move it to the right-hand side. In summary, this leads to
		\begin{align*}
			&\int_\Omega h \Str(\varphi_k,\D\vel) : \D\vel
			+ h \mj(\varphi_k) \abs{\Grad\mu}^2 + h \mr(\varphi_k) \bigprt{\mu+\alpha\lambda}^2 \dx \\
			&\quad + \int_\Omega \frac{\abs{\Grad\varphi}^2}{2} \dx
			+ (1-\vartheta) h\biggprt{ \int_\Omega \mu \dy}^2 \\
			&\leq
			\int_\Omega \frac{\rho_k\abs{\vel_k}^2}{2} + \frac{\varphi_k^2}{2} + \frac{\abs{\Grad\varphi_k}^2}{2}  + F_0(\varphi_k) + \abs{\kappa}\frac{\varphi_k^2}{2} \dx
			+ \Bigabs{ \int_\Omega F(\varphi) \dx } 
			\leq C_k.
		\end{align*}
		In view of $\abs{\varphi}\leq1$ a.e.~in $\Omega$, Korn's inequality for $\vel\in \rmH^1_0(\Omega)^d$, and the positive lower bound on $\nu, \mj, \mr$, we arrive at
		\begin{align}
			\label{discr:4.21}
			\norm{\vel}_{\rmH^1(\Omega)} + \norm{\Grad\mu}_{\rmL^2(\Omega)} +  \norm{\mu+\alpha\lambda}_{\rmL^2(\Omega)}
			+ \norm{\varphi}_{\rmH^1(\Omega)} + \prt{1-\vartheta}^{\frac12} \Bigabs{ \int_\Omega \mu \dy}
			\leq C_k.
		\end{align}
		
		In a subsequent step, we control the $\rmL^2$-Norm of $\mu$ by considering two distinct cases. On the one hand, for $\vartheta\in [\frac12,1]$, we proceed as in the proof of Lemma \ref{AGG:exws:lemma:estimates}, with \eqref{AGG:exws:eq:time-discr_sol_d} replaced by \eqref{AGG:exws:eq:time-discr_sol_LS_d}, to get an estimate
		\begin{align*}
			\tfrac12 \Bigabs{\int_\Omega\mu\dx}
			\leq \vartheta \Bigabs{\int_\Omega\mu\dx}
			\leq C \bigprt{ \norm{\Grad\mu}_{\rmL^2(\Omega)} + \norm{\Grad\varphi}_{\rmL^2(\Omega)}^2 + \norm{\Grad\varphi_k}_{\rmL^2(\Omega)}^2 +1 }
			\leq C_k,
		\end{align*}
		where the right-hand side is bounded due to \eqref{discr:4.21}. On the other hand, if $\vartheta\in[0,\frac12)$, a direct use of \eqref{discr:4.21} implies $\abs{\int_\Omega\mu\dx} \leq C_k$.
		The control over both $\abs{\int_\Omega\mu\dx}$ and $\norm{\Grad\mu}_{\rmL^2(\Omega)}$ enables us to bound $\mu$ in $\rmH^1(\Omega)$ and therefore, given that $\norm{\mu+\alpha\lambda}_{\rmL^2(\Omega)}\leq C_k$, also $\lambda$ in~$\rmL^2(\Omega)$. Incorporating estimate \eqref{AGG:exws:eq:mu_H2}, where now $g_3=-\vartheta\frac{\varphi-\varphi_k}{h} + \vartheta\int_\Omega \mu \dy$, refines \eqref{discr:4.21} to
		\begin{align*}
			\norm{\vel}_{\rmH^1(\Omega)} + \norm{\lambda}_{\rmL^2(\Omega)} + \norm{\mu}_{\rmH^2(\Omega)} + \norm{\varphi}_{\rmH^1(\Omega)}
			\leq C_k.
		\end{align*}
	
		In combination with equation \eqref{AGG:exws:eq:time-discr_sol_LS_d}, this also entails that $\norm{\partial E(\varphi)}_{\rmL^2} \leq C_k$, from where estimate \eqref{eq:subgrad_without:estimate_H2} further yields the bound $\norm{\varphi}_{\rmH^2(\Omega)}\leq C_k$. Altogether, we have now shown that~$\z=(\vel,\lambda,\mu,\varphi) \in \tilde X = \rmH_0^1(\Omega)^d \times \rmL^2(\Omega) \times \rmH^2_\Neum(\Omega) \times \rmH^{2-s}(\Omega)$, where $s\in(0,\frac14)$ is arbitrary, satisfies the estimate
		\begin{align*}
			\norm{\z}_{\tilde X}
			\leq C_k.
		\end{align*}
		Eventually, since $\f\in\tilde Y$ fulfills $\f=\vartheta\mathcal{K}_k(\f) =  \vartheta\mathcal{F}_k \mathcal{L}_k^{-1} (\f) =  \vartheta\mathcal{F}_k (\z)$ and $\mathcal{F}_k \colon \tilde X\to\tilde Y$ maps bounded sets into bounded sets, we conclude
		\begin{align*}
			\norm{\f}_{\tilde Y}
			= \norm{\vartheta\mathcal{F}_k (\z)}_{\tilde Y}
			\leq C_k\bigprt{ \norm{\z}_{\tilde X} +1 }
			\leq C_k,
		\end{align*}
		which verifies the Leray--Schauder condition as required. 
		
		Hence, the operator $\mathcal{K}_k$ has a fixed point $\f\in\tilde Y \hookrightarrow Y$, and $\z=\mathcal{L}_k^{-1}(\f) \in X$ solves the equation $\mathcal{L}_k(\z) = \mathcal{F}_k(\z)$.
		For each $k\in\N$, we have thus established the existence of a solution~$\z=(\vel,\lambda,\mu,\varphi) = (\vel_{k+1},\lambda_{k+1},\mu_{k+1},\varphi_{k+1}) \in X$ to the time-discrete system~\eqref{AGG:exws:eq:time-discr_sol_a}--\eqref{AGG:exws:eq:time-discr_sol_d}.
		\end{proof}

	\subsection{Proof of Theorem \ref{thm:AGG}} \label{AGG:exws:subsec:proof_main}
	
	In this section, we present a proof of one of our main results, Theorem~\ref{thm:AGG}. Starting from the time-discrete solutions constructed in the previous subsection, we perform a limit passage in the corresponding time-continuous system based on compactness results.
	
	\begin{proof}
		For given $N\in\N$, let $(\vel_{k+1},\lambda_{k+1},\mu_{k+1},\varphi_{k+1})$, $k\in\N_0$, be iteratively chosen as a solution to the time-discrete system \eqref{AGG:exws:eq:time-discr_sol_a}--\eqref{AGG:exws:eq:time-discr_sol_d} with time step size $h=\frac1N$ and initial value~$(\vel_0,\varphi_0^N)$, where $\varphi_0^N$ is specified below. The existence of such solutions, which additionally satisfy the energy inequality~\eqref{AGG:exws:eq:discr_energy_ineq}, is ensured by Lemma~\ref{AGG:exws:lemma:discr_energy_est} and Lemma~\ref{AGG:exws:lemma:ex_time-discr}. To apply these lemmas to the first step, we approximate the original initial value~$\varphi_0$ by functions $\varphi_0^N \in \rmH^2(\Omega)$. For this construction, we use standard partial differential equation techniques, in particular the solution $u$ of the heat equation
		\begin{equation*}
			\arraycolsep=2pt
			\begin{array}{rclll}
			\delt u - \Lap u &=& 0 &\text{ in } Q, \\[0.25ex]
			\partial_\n u \vert_{\partial\Omega} &=&0 &\text{ on } S, \\[0.25ex]
			u \vert_{t=0} &=& \varphi_0 &\text{ in } \Omega.
			\end{array}
		\end{equation*}
		Setting $\varphi_0^N \coloneqq u\vert_{t=\frac1N}$ ensures $\varphi_0^N \in \rmH^2(\Omega)$ as well as $\abs{\varphi_0^N}\leq 1$ in $\Omega$ and $\varphi_0^N\to\varphi_0$ in $\rmH^1(\Omega)$ as desired.
		
		To start, we fix the following notation for $f\in\{\vel, \lambda, \mu,\varphi\}$. On $[-h,\infty)\times\Omega$, we define the piecewise constant interpolant of the time-discrete functions by 
		\begin{align*}
			f^N(t) \coloneqq f_k \qquad\text{for } t\in[(k-1)h,kh), \quad k\in \N_0,
		\end{align*}
		and additionally, we set $\rho^N \coloneqq \frac12(\tilde\rho_+ + \tilde\rho_-) + \frac12(\tilde\rho_+ - \tilde\rho_-)\varphi^N$. For all $k\in\N_0$, it consequently holds $f^N((k-1)h)=f_k$ and $f^N(kh)=f_{k+1}$ as well as $f^N(t)=f_{k+1}$ for $t\in[kh,(k+1)h)$. Moreover, we introduce notation for time differences, difference quotients and time shifts, namely
		\begin{gather*}
			(\Delta_h^+ f)(t) \coloneqq f(t+h)-f(t), \qquad (\Delta_h^- f)(t) \coloneqq f(t)-f(t-h), \\
			\partial_{t,h}^+ f(t) \coloneqq \frac1h (\Delta_h^+ f)(t), \qquad \partial_{t,h}^- f(t) \coloneqq \frac1h (\Delta_h^- f)(t), \\
			f_h(t) \coloneqq f(t-h).
		\end{gather*}
	
		From the time-discrete system \eqref{AGG:exws:eq:time-discr_sol_a}--\eqref{AGG:exws:eq:time-discr_sol_d}, we derive in the following the corresponding time-continuous equations, where we use the discrete integration by parts formula
		\begin{align}
			\label{AGG:exws:eq:discr_int_by_parts}
			\int_Q \partial_{t,h}^-\phi\xi \dtx = - \int_Q \phi \partial_{t,h}^+\xi \dtx,
		\end{align} 
		which holds for all $\phi\in \rmL^2(-h,\infty;\rmL^2(\Omega))$, $\xi\in \rmC_0^\infty((0,\infty);\rmL^2(\Omega))$, provided that $h>0$ is sufficiently small. 
		Therefore, for arbitrary $\bpsi\in \rmC_0^\infty(Q)^d$, testing \eqref{AGG:exws:eq:time-discr_sol_a} with $\tilde\bpsi \coloneqq \int_{kh}^{(k+1)h} \bpsi \dt$ and summing over $k\in \N_0$ yields
		\begin{subequations}
		\begin{align}
			&\int_Q - \rho^N\vel^N \cdot \partial_{t,h}^+ \bpsi - \bigprt{\rho_h^N\vel^N\otimes\vel^N} : \Grad\bpsi - \bigprt{\vel^N\otimes\tilde\J^N} : \Grad \bpsi \dtx \nonumber\\
			&\quad+ \int_Q \Str \bigprt{\varphi_h^N,\D\vel^N} : \D\bpsi  - \lambda^N \nabla\cdot\bpsi \dtx
			= -\int_Q \varphi_h^N\Grad\mu^N \cdot \bpsi \dtx
			\label{AGG:exws:eq:time-cont_sol_a}
		\end{align}
		for all $\bpsi\in \rmC_0^\infty(Q)^d$ if $h$ is small enough. We further obtain directly
		\begin{align}
			\nabla\cdot\vel^N = -\alpha \mr\bigprt{\varphi_h^N} \bigprt{\mu^N+\alpha\lambda^N} \quad\text{a.e.~in } Q, 
			\label{AGG:exws:eq:time-cont_sol_b}
		\end{align}
		whereas, for sufficiently small $h$, following the same testing procedure as in \eqref{AGG:exws:eq:time-cont_sol_a} leads to
		\begin{align}
			&\int_Q -\varphi^N \partial_{t,h}^+\zeta - \varphi_h^N\vel^N \cdot\Grad\zeta \dtx \nonumber\\
			&= \int_Q -\bigprt{\mj\bigprt{\varphi_h^N} \Grad\mu^N} \cdot \Grad\zeta - \mr\bigprt{\varphi_h^N} \bigprt{\mu^N+\alpha\lambda^N} \zeta \dtx
			\label{AGG:exws:eq:time-cont_sol_c}
		\end{align}
		for all $\zeta\in \rmC_0^\infty((0,\infty)\times\overline\Omega)$. Finally, it holds
		\begin{align}
			\mu^N + \kappa\frac{\varphi^N+\varphi_h^N}{2}= F_0'\bigprt{\varphi^N}-\Lap\varphi^N \quad\text{a.e.~in } Q.
			\label{AGG:exws:eq:time-cont_sol_d}
		\end{align}
		\end{subequations}
	
		Next, concerning the total energy of the system, let $E_N$ denote the piecewise linear interpolant of $E_\tot(\vel_k,\varphi_k)$ at time $t_k=kh$, defined as
		\begin{align}
			\label{AGG:exws:eq:E_N}
			E_N(t)
			\coloneqq \frac{(k+1)h-t}{h} E_\tot(\vel_k,\varphi_k) + \frac{t-kh}{h} E_\tot(\vel_{k+1},\varphi_{k+1})
		\end{align}
		for $t\in[t_k,t_{k+1})$ where $k\in\N_0$. For all $t\in(t_k,t_{k+1})$, $k\in\N_0$, we further define the dissipation
		\begin{align}
			\label{AGG:exws:eq:D_N}
			D_N(t) \!
			\coloneqq \!\! \int_\Omega \! \Str(\varphi_k,\D\vel_{k+1}) : \D\vel_{k+1} 
			+ \mj(\varphi_k) \abs{\Grad\mu_{k+1}}^2 + \mr(\varphi_k) \prt{\mu_{k+1}+\alpha\lambda_{k+1}}^2 \dx.
		\end{align}
		From the discrete energy inequality \eqref{AGG:exws:eq:discr_energy_ineq}, we obtain
		\begin{align*}
			-\ddt E_N(t) 
			= \frac{E_\tot(\vel_k,\varphi_k) - E_\tot(\vel_{k+1},\varphi_{k+1})}{h}
			\geq D_N(t)
		\end{align*}
		for all $t\in[t_k,t_{k+1})$ with $k\in\N_0$. On the one hand, we multiply this inequality by a function~$\sigma\in \rmW^{1,1}(0,\infty)$ with $\sigma\geq0$ and integrate by parts to get
		\begin{align}
			\label{AGG:exws:eq:energy_ineq_D_N}
			E_\tot\bigprt{\vel_0^N,\varphi_0}\sigma(0) + \int_0^\infty E_N(t)\sigma'(t) \dt
			\geq \int_0^\infty D_N(t)\sigma(t)\dt.
		\end{align}
		On the other hand, directly integrating the above inequality entails the following energy inequality for the time-continuous system, which reads
		\begin{align}
			\label{AGG:exws:eq:time-cont_energy}
			&E_\tot \bigprt{\vel^N(t),\varphi^N(t)} + \int_{Q_{(s,t)}} \Str\bigprt{\varphi_h^N,\D\vel^N} : \D\vel^N 
			+ \mj\bigprt{\varphi_h^N} \bigabs{\Grad\mu^N}^2 \dtaux \nonumber \\
			&\quad + \int_{Q_{(s,t)}} \mr\bigprt{\varphi_h^N} \bigprt{\mu^N+\alpha\lambda^N}^2 \dtaux 
			\leq E_\tot \bigprt{\vel^N(s),\varphi^N(s)}
		\end{align}
		for all $0\leq s\leq t<\infty$ with $s,t\in h\N_0$.
		
		From \eqref{AGG:exws:eq:time-cont_energy}, invoking Korn's inequality for $\vel^N$ and the fact that $\mean{\varphi^N}$ is constant, we infer that
		\begin{align*}
			\bigprt{\vel^N, \varphi^N} &\text{ is bounded in } \rmL^\infty \bigprt{ 0,\infty;\rmL^2(\Omega)^d \times \rmH^1(\Omega) }, \\
			\bigprt{\vel^N, \Grad\mu^N, \mu^N+\alpha\lambda^N} &\text{ is bounded in } \rmL^2 \bigprt{ 0,\infty;\rmH^1(\Omega)^d \times \rmL^2(\Omega)^d \times \rmL^2(\Omega) }.
		\end{align*}
		Furthermore, in combination with Lemma~\ref{AGG:exws:lemma:estimates}, we conclude
		\begin{align*}
			\int_0^T \Bigabs{\int_\Omega \mu^N \dx} \dt \leq M(T) \quad\text{for all }0<T<\infty \text{ with } M\colon\R_+\to\R_+ \text{ monotone}.
		\end{align*}
		Thus, we are in a position to pass to non-relabeled subsequences converging weakly (or weakly-*, respectively) as $N\to\infty$ as follows:
		\begin{alignat*}{2}
			\vel^N &\overset{(\ast)\;\,}{\rightharpoonup} \vel &&\quad\text{in } \rmL^2(0,\infty;\rmH^1(\Omega)^d) \cap \rmL^\infty(0,\infty;\rmL^2(\Omega)^d), \\
			\varphi^N &\overset{\ast}{\rightharpoonup} \varphi &&\quad\text{in } \rmL^\infty(0,\infty;\rmH^1(\Omega)), \\
			\Grad\mu^N &\rightharpoonup \Grad\mu &&\quad\text{in } \rmL^2(0,\infty;\rmL^2(\Omega)^d), \\
			\mu^N &\rightharpoonup \mu &&\quad\text{in } \rmL^2(0,T;\rmH^1(\Omega)) \text{ for all }0<T<\infty, \\
			\lambda^N &\rightharpoonup \lambda &&\quad\text{in } \rmL^2(0,T;\rmL^2(\Omega)) \text{ for all }0<T<\infty.
		\end{alignat*}
		
		In order to realize the limit passage in all non-linearities, we additionally show strong convergence of $\varphi^N$ and $\vel^N$ in the following. We start by defining $\tilde\varphi^N$ as the piecewise linear interpolant of $\varphi^N(t_k)$ where $t_k=kh$ and $k\in\N_0$. The time derivative of $\tilde\varphi^N$ is then given by a difference quotient, specifically
		\begin{align}
			\label{AGG:exws:eq:time_derivative_lin_interpolant}
			\delt\tilde\varphi^N 
			= \partial_{t,h}^-\varphi^N
			\quad\text{for almost all } t\in(0,\infty),
		\end{align}
		and the difference of the two interpolants may be estimated by
		\begin{align}
			\label{AGG:exws:eq:diff_of_interpolants}
			\bignorm{\tilde\varphi^N-\varphi^N}_{\rmH^{-1}(\Omega)}
			\leq h\bignorm{\delt\tilde\varphi^N}_{\rmH^{-1}(\Omega)}.
		\end{align}
		In view of the discrete integration by parts formula \eqref{AGG:exws:eq:discr_int_by_parts}, equation~\eqref{AGG:exws:eq:time-cont_sol_c} can be rewritten as
		\begin{align*}
			&\int_Q \partial_{t,h}^-\varphi^N\zeta - \varphi_h^N\vel^N \cdot\Grad\zeta \dtx \\
			&= \int_Q -\bigprt{\mj\bigprt{\varphi_h^N} \Grad\mu^N} \cdot \Grad\zeta - \mr\bigprt{\varphi_h^N} \bigprt{\mu^N+\alpha\lambda^N} \zeta \dtx
		\end{align*}
		for all $\zeta\in \rmC_0^\infty((0,\infty)\times\overline\Omega)$. Studying the latter, along with identity \eqref{AGG:exws:eq:time_derivative_lin_interpolant}, reveals that~$\delt\tilde\varphi^N$ is bounded in $\rmL^2(0,\infty;\rmH^{-1}(\Omega))$ since the terms $\varphi_h^N\vel^N$, $\Grad\mu^N$ and $\mu+\alpha\lambda^N$ are bounded in~$\rmL^2(0,\infty;\rmL^2(\Omega)^d)$ and $\rmL^2(0,\infty;\rmL^2(\Omega))$, respectively. Moreover, observing that the boundedness of $\varphi^N$ in~$\rmL^\infty(0,\infty;\rmH^1(\Omega))$ implies boundedness of $\tilde\varphi^N$ in the same space, the Aubin--Lions lemma yields the strong convergence
		\begin{align*}
			\tilde\varphi^N \to \tilde\varphi \quad\text{in } \rmL^2(0,T;\rmL^2(\Omega)) \text{ for all } 0<T<\infty
		\end{align*}
		for some function $\tilde\varphi\in \rmL^\infty(0,\infty;\rmL^2(\Omega))$. In particular it follows $\tilde\varphi^N \to \tilde\varphi$ a.e.~in $Q$ along a subsequence. Furthermore, estimate~\eqref{AGG:exws:eq:diff_of_interpolants} entails that
		\begin{align*}
			\tilde\varphi^N - \varphi^N \to 0 \quad\text{in } \rmL^2(0,\infty;\rmH^{-1}(\Omega))
		\end{align*}
		and thus $\tilde\varphi=\varphi$. Since $\tilde\varphi^N \to \varphi$ in $\rmL^2(0,T;\rmL^2(\Omega))$ for all $0<T<\infty$ and both $\tilde\varphi^N$ and $\varphi^N$ are bounded in $\rmL^2(0,T;\rmH^1(\Omega))$ we further conclude by \eqref{AGG:exws:eq:diff_of_interpolants} and an interpolation argument that
		\begin{align}
			\label{AGG:exws:eq:strong_conv_phi_L2}
			\varphi^N \to \varphi \quad\text{in } \rmL^2(0,T;\rmL^2(\Omega)) \text{ for all } 0<T<\infty.
		\end{align}
		Moreover, we employ the embedding
		\begin{align*}
			\rmH^1_\uloc([0,\infty);\rmH^{-1}(\Omega)) \cap \rmL^2_\uloc([0,\infty);\rmH^1(\Omega)) 
			\hookrightarrow \BC([0,\infty);\rmL^2(\Omega)),
		\end{align*}
		where this uniformly local variant can be derived immediately from Theorem~4.10.2, Chapter~III in \cite{Amann1995}. The boundedness of $\tilde\varphi^N$ in this space and in $\rmL^\infty(0,\infty;\rmH^1(\Omega))$ establishes~$\varphi\in \BCw([0,\infty);\rmH^1(\Omega))$, which can be verified by applying Lemma~4.1 in \cite{Abels2009g} to arbitrary intervals $[0,T]$ with $0<T<\infty$.
		
		Combining the fact that $\tilde\varphi^N$ is bounded in $\rmH^1 (0,\infty;\rmH^{-1}(\Omega))$ and in $\rmL^\infty(0,\infty;\rmH^1(\Omega))$ with the convergence $\tilde\varphi^N \to \varphi$ in $\rmL^2(0,T;\rmL^2(\Omega))$ for all $0<T<\infty$, we further deduce that~$\tilde\varphi^N\vert_{t=0}\to\varphi\vert_{t=0}$ in $\rmL^2(\Omega)$. In view of $\tilde\varphi^N\vert_{t=0} = \varphi_0^N\to\varphi_0$ in $\rmL^2(\Omega)$, we ultimately arrive at the identity $\varphi\vert_{t=0}=\varphi_0$.
		
		The affine linear dependence of $\rho^N$ on $\varphi^N$ enables us to derive analogous results for $\rho^N$.
		
		In order to pass to the limit in \eqref{AGG:exws:eq:time-cont_sol_d} now, we first point out that the right-hand side is given by $\partial E\bigprt{\varphi^N}$ which is bounded in $\rmL^2(0,T;\rmL^2(\Omega))$ for all $0<T<\infty$ according to Lemma~\ref{AGG:exws:lemma:estimates}. On the other hand, the left-hand side term converges weakly to $\mu+\kappa\varphi$ in~$\rmL^2(0,T;\rmL^2(\Omega))$ for all $0<T<\infty$. Consequently, it holds $\partial E(\varphi^N) \rightharpoonup \mu+\kappa\varphi$ in the same space, from where we conclude, along with \eqref{AGG:exws:eq:strong_conv_phi_L2}, that
		\begin{align*}
			\big\langle \partial E\bigprt{\varphi^N}, \varphi^N \big\rangle
			\to \big\langle \mu+\kappa\varphi,\varphi \big\rangle.
		\end{align*}
		With $\partial E$ being a maximal monotone operator, Proposition IV.1.6 in \cite{Showalter1997} hence establishes~${\partial E(\varphi) = \mu+\kappa\varphi}$, thereby eventually proving equation~\eqref{AGG:exws:eq:weak_sol_d}. Since this identity additionally involves $\partial E(\varphi)\in \rmL^2_\uloc([0,\infty);\rmH^1(\Omega))$, we are finally in a position to use estimate~\eqref{eq:subgrad_without:estimate_W2p} which, due to the continuous embedding $\rmH^1(\Omega) \hookrightarrow \rmL^p(\Omega)$ for $p=6$ if~$d=3$ and $p\in[2,\infty)$ if $d=2$, implies the claimed regularities $\varphi\in \rmL^2_\uloc([0,\infty);\rmW^{2,p}(\Omega))$ as well as~$F_0'(\varphi)\in \rmL^2_\uloc([0,\infty);\rmL^p(\Omega))$.
		
		Now, we aim to show an even better strong convergence for $\varphi^N$. To this end, we study Lemma~\ref{AGG:exws:lemma:estimates} to obtain boundedness of $\varphi^N$ in $\rmL^2(0,T;\rmH^2(\Omega))$ for all $0<T<\infty$. Together with \eqref{AGG:exws:eq:strong_conv_phi_L2}, an interpolation argument verifies
		\begin{align}
			\label{AGG:exws:eq:strong_conv_phi_H1}
			\varphi^N\to\varphi 
			\quad\text{in } \rmL^2(0,T;\rmH^1(\Omega)) \text{ for all } 0<T<\infty.
		\end{align}
		
		Our next goal is to prove strong convergence of $\vel^N$ in $\rmL^2(0,T;\rmL^2(\Omega)^d)$ for all $0<T<\infty$. With $\widetilde{\rho\vel}^N$ denoting the piecewise linear interpolant of $(\rho^N\vel^N)(t_k)$ where $t_k=kh$, $k\in\N_0$, it holds as previously that
		\begin{align*}
			\delt\bigprt{\widetilde{\rho\vel}^N} = \partial_{t,h}^-\bigprt{\rho^N\vel^N}.
		\end{align*}
		We rewrite \eqref{AGG:exws:eq:time-cont_sol_a} with the help of discrete integration by parts formula \eqref{AGG:exws:eq:discr_int_by_parts} into
		\begin{align*}
			&\int_Q \partial_{t,h}^-\bigprt{\rho^N\vel^N} \cdot \bpsi - \bigprt{\rho_h^N\vel^N\otimes\vel^N} : \Grad\bpsi - \bigprt{\vel^N\otimes\tilde\J^N} : \Grad \bpsi \dtx \nonumber\\
			&\quad+ \int_Q \Str \bigprt{\varphi_h^N,\D\vel^N} : \D\bpsi  - \lambda^N \nabla\cdot\bpsi \dtx
			= -\int_Q \varphi_h^N\Grad\mu^N \cdot \bpsi \dtx
		\end{align*}
		for all $\bpsi\in \rmC_0^\infty(Q)^d$. In the following, we study the single terms therein to show that~$\delt\bigprt{\widetilde{\rho\vel}^N}$ is bounded in $\rmL^{\frac87}(0,T;\rmW^{-1,\frac43}(\Omega)^d)$ for arbitrary $0<T<\infty$. 
		
		\begin{enumerate}[label=(\roman*)]
			\item $\rho_h^N\vel^N\otimes\vel^N$ is bounded in $\rmL^2(\rmL^3)$.\\
			Since $\vel^N$ is bounded in $\rmL^\infty(\rmL^2)$ and $\rmL^2(\rmH^1)\hookrightarrow \rmL^2(\rmL^6)$ and $\rho_h^N$ in $\rmL^\infty(Q)$ we obtain boundedness of the occurring products $\rho_h^N\vel_i^N\vel_j^N$ in the desired space $\rmL^2(\rmL^3)$.
			\item $\vel^N\otimes\tilde\J^N$ is bounded in $\rmL^{\frac87}(\rmL^{\frac43})$.\\
			Given that $\tilde\J^N=-b_- \mj(\varphi_h^N)\Grad\mu^N$, it suffices to consider $\vel^N\otimes\Grad\mu^N$. For the~resulting terms of the form $\vel_i^N\partial_j\mu^N$, combining $\vel^N$ being bounded in~$\rmL^2(\rmL^6) \cap \rmL^\infty(\rmL^2)$ and $\Grad\mu^N$ in $\rmL^2(\rmL^2)$, implies boundedness in the spaces $\rmL^2(\rmL^1)$ and $\rmL^1(\rmL^\frac32)$. An interpolation argument then leads to boundedness of these products in $\rmL^{\frac87}(\rmL^{\frac43})$.
			\item $\Str(\varphi_h^N,\D\vel^N)$, $\lambda^N$ and $\varphi_h^N\Grad\mu^N$ are obviously bounded in $\rmL^2(\rmL^2)$.
		\end{enumerate}
		Thus, in particular, all the terms above are bounded in $\rmL^{\frac87}(0,T;\rmL^{\frac43}(\Omega))$ in their respective dimension. As a consequence, it is also possible to test \eqref{AGG:exws:eq:time-cont_sol_a} with functions satisfying both~$\bpsi\in \bigprt{\rmL^{\frac87}(0,T;\rmL^{\frac43}(\Omega)^d)}' = \rmL^8(0,T;\rmL^4(\Omega)^d)$ and $\Grad\bpsi\in \rmL^8(0,T;\rmL^4(\Omega)^{d\times d})$ as well as $\bpsi\vert_{\partial\Omega}=0$, i.e.~with $\bpsi \in \rmL^8(0,T;\rmW_0^{1,4}(\Omega)^d)$. This eventually ensures that $\delt\bigprt{\widetilde{\rho\vel}^N}$ is bounded in the desired space $\bigprt{\rmL^8(0,T;\rmW_0^{1,4}(\Omega)^d)}' = \rmL^{\frac87}(0,T;\rmW^{-1,\frac43}(\Omega)^d)$. 
		
		From this fact, along with the boundedness of $\widetilde{\rho\vel}^N$ in $\rmL^2(0,T;\rmH^1(\Omega)^d)$ for all $0<T<\infty$, which follows from the boundedness of $\rho^N$ in $\rmL^\infty(Q)$ and of $\vel^N$ in $\rmL^2(0,\infty;\rmH^1(\Omega)^d)$, and the boundedness of both $\Grad\rho^N$ and $\vel^N$ in $\rmL^2(0,T;\rmH^1(\Omega)^d) \cap \rmL^\infty(0,\infty;\rmL^2(\Omega)^d)$ for all $0<T<\infty$ which embeds into $\rmL^4(0,T;\rmL^4(\Omega)^d)$,
		we deduce in view of the Aubin--Lions lemma the strong convergence
		\begin{align*}
			\widetilde{\rho\vel}^N \to \tilde\vel 
			\quad\text{in } \rmL^2(0,T;\rmL^2(\Omega)^d) \text{ for all } 0<T<\infty
		\end{align*}
		for some $\tilde\vel\in \rmL^\infty(0,\infty;\rmL^2(\Omega)^d)$. Since $\widetilde{\rho\vel}^N \rightharpoonup \rho\vel$ in $\rmL^2(0,T;\rmL^2(\Omega)^d)$ it follows $\tilde\vel=\rho\vel$. For every $0<T<\infty$, combining the strong convergence $\widetilde{\rho\vel}^N \to \rho\vel$ in $\rmL^2(0,T;\rmL^2(\Omega)^d)$ and the weak convergence $\vel^N\rightharpoonup\vel$ in the same space, we obtain
		\begin{align*}
			\int_{Q_{(0,T)}} \rho^N \bigabs{\vel^N}^2 \dtx
			\to \int_{Q_{(0,T)}} \rho \abs{\vel}^2 \dtx,
		\end{align*}
		and as a consequence $\prt{\rho^N}^\frac12\vel^N \to \rho^\frac12\vel$ in $\rmL^2(0,T;\rmL^2(\Omega)^d)$ for all $0<T<\infty$. Recalling that $\rho^N\to\rho$ a.e.~in $Q$ due to \eqref{AGG:exws:eq:strong_conv_phi_L2} and that $\rho^N$ is bounded away from zero we conclude
		\begin{align}
			\label{AGG:exws:eq:strong_conv_vel}
			\vel^N 
			= \bigprt{\rho^N}^{-\frac12} \Bigprt{\bigprt{\rho^N}^\frac12\vel^N}
			\to \vel
			\quad\text{in } \rmL^2(0,T;\rmL^2(\Omega)^d) \text{ for all } 0<T<\infty
		\end{align}
		as desired. In particular, we establish the convergence $\vel^N\to\vel$ a.e.~in $Q$ along a subsequence.
		
		Eventually, by means of the proven convergence results, passing to the limit to verify the equations~\eqref{AGG:exws:eq:weak_sol_a}--\eqref{AGG:exws:eq:weak_sol_c} becomes possible.
		
		Moreover, following a similar argumentation as for proving $\varphi\in \BCw([0,\infty);\rmH^1(\Omega))$, we obtain that the boundedness of $\widetilde{\rho\vel}^N$ in 
		\begin{align*}
			\rmW^{1,\frac87}_\uloc([0,\infty);\rmW^{-1,4}(\Omega)^d) \hookrightarrow \BC([0,\infty);\rmW^{-1,4}(\Omega)^d)
		\end{align*}
		and in $\rmL^\infty(0,\infty;\rmL^2(\Omega)^d)$, along with the strong convergence to $\rho\vel$ in $\rmL^2(0,T;\rmL^2(\Omega)^d)$ for all~$0<T<\infty$, implies $\rho\vel\in \BCw([0,\infty);\rmL^2(\Omega)^d)$. Given that $\rho$ belongs to the space~$\BC([0,\infty);\rmL^2(\Omega))$ and is bounded from below by a positive constant, we eventually infer $\vel=\frac{1}{\rho} \rho\vel \in \BCw([0,\infty);\rmL^2(\Omega)^d)$.
		
		In order to show that $\vel$ attains the initial value $\vel_0$, we observe that due to the above embedding, it holds
		\begin{align*}
			\int_\Omega \widetilde{\rho\vel}^N \vert_{t=0} \cdot \bpsi \dx 
			\to \int_\Omega \rho_0\vel \vert_{t=0} \cdot \bpsi \dx 
		\end{align*}
		for all $\bpsi\in \rmC_0^\infty(\Omega)^d$. Since $\widetilde{\rho\vel}^N \vert_{t=0} = \rho_0\vel_0$ by definition, this implies
		\begin{align*}
			\int_\Omega \rho_0\vel_0 \cdot \bpsi \dx 
			= \int_\Omega \rho_0\vel \vert_{t=0} \cdot \bpsi \dx 
		\end{align*}
		for all $\bpsi\in \rmC_0^\infty(\Omega)^d$ and by approximation for all $\bpsi\in \rmH_0^1(\Omega)^d$ as well. Testing with the difference $\vel_0-\vel\vert_{t=0}$ entails $\int_\Omega \rho_0\bigabs{ \vel_0 - \vel \vert_{t=0} }^2 \dx = 0$ and therefore $\vel_0 = \vel\vert_{t=0}$ a.e.~in $\Omega$ as desired.
		
		For the remaining proof of the energy inequality \eqref{AGG:exws:eq:weak_sol_energy_ineq}, we refer to Subsection~\ref{AGG:exws:subsec:energy_ineq} which follows directly.
	\end{proof}

	\subsection{Energy Inequality} \label{AGG:exws:subsec:energy_ineq}
	
	Eventually, we are in a position to complete the proof of Theorem~\ref{thm:AGG} by verifying the energy inequality \eqref{AGG:exws:eq:weak_sol_energy_ineq}. Due to the strong convergences \eqref{AGG:exws:eq:strong_conv_phi_H1} and~\eqref{AGG:exws:eq:strong_conv_vel} we can extract a subsequence satisfying both $\vel^N(t)\to\vel(t)$ in $\rmL^2(\Omega)^d$ as well as~$\varphi^N(t)\to\varphi(t)$ in $\rmH^1(\Omega)$ for almost every $t\in(0,\infty)$. For $E_N$ defined in \eqref{AGG:exws:eq:E_N}, it hence follows
	\begin{align*}
		E_N(t) \to E_\tot\bigprt{\vel(t),\varphi(t)} \quad\text{for almost all } t\in(0,\infty).
	\end{align*}
	Furthermore, employing lower semicontinuity of norms together with $\varphi^N\to\varphi$ a.e.~in $Q$, we obtain for $D_N$ from \eqref{AGG:exws:eq:D_N} that
	\begin{align*}
		\liminf_{N\to\infty} \int_0^\infty D_N(t)\sigma(t)\dt
		\geq \int_0^\infty D(t)\sigma(t)\dt
	\end{align*}
	for all $\sigma\in \rmW^{1,1}(0,\infty)$ with $\sigma\geq0$, where the dissipation $D$ is accordingly given by
	\begin{align*}
		D(t)
		\coloneqq \int_\Omega \Str(\varphi,\D\vel) : \D\vel + \mj(\varphi) \abs{\Grad\mu}^2 + \mr(\varphi) \bigprt{\mu+\alpha\lambda}^2 \dx.
	\end{align*}
	Passing to the limit in the above inequality~\eqref{AGG:exws:eq:energy_ineq_D_N} thus yields
	\begin{align*}
		E_\textnormal{tot}\bigprt{\vel_0,\varphi_0}\sigma(0) + \int_0^\infty E_\tot(t)\sigma'(t) \dt
		\geq \int_0^\infty D(t)\sigma(t)\dt
	\end{align*}
	for all $\sigma\in \rmW^{1,1}(0,\infty)$ with $\sigma\geq0$. According to Lemma~4.3 in \cite{Abels2009g} this is equivalent to the desired energy inequality \eqref{AGG:exws:eq:weak_sol_energy_ineq}.

	%%%%%%%%%%%%%%%%%%%%%%%%%%%%%%%%%%%%%%%%%%%%%%%%%%%%%%%%%%%%%%%%%
	%%%%%%%%%%%%%%%%%%%%%%%%%%%%%%%%%%%%%%%%%%%%%%%%%%%%%%%%%%%%%%%%%
	%%%%%%%%%%%%%%%%%%%%%%%%%%%%%%%%%%%%%%%%%%%%%%%%%%%%%%%%%%%%%%%%%
	\section{Reformulation of the Model with Mass Averaged Velocity} \label{sec:LT_reformulation}
	%%%%%%%%%%%%%%%%%%%%%%%%%%%%%%%%%%%%%%%%%%%%%%%%%%%%%%%%%%%%%%%%%
	%%%%%%%%%%%%%%%%%%%%%%%%%%%%%%%%%%%%%%%%%%%%%%%%%%%%%%%%%%%%%%%%%
	%%%%%%%%%%%%%%%%%%%%%%%%%%%%%%%%%%%%%%%%%%%%%%%%%%%%%%%%%%%%%%%%%
	
	This section is dedicated to a slight reformulation of the thermodynamically consistent diffuse interface model by Aki et al.~\cite{Aki2014} for binary flows with phase transitions. In contrast to the model derived in Section~\ref{sec:AGG_modeling}, the present quasi-incompressible system incorporates a mass averaged velocity to describe the motion of the mixture constituents with unmatched densities. The modifications performed in the following lay the groundwork for the analytical studies conducted in Section~\ref{sec:LT_exws}.
	
	Our starting point is the following variant of the Navier--Stokes/Cahn--Hilliard system proposed in \cite{Aki2014}:
	\begin{equation*}
		\label{LT:reform:original_model}
		\arraycolsep=2pt
		\begin{array}{rclll}
			\rho \bigprt{\delt\uel + (\uel\cdot\nabla)\uel} - \nabla\cdot \Str(\varphi, \D\uel) + \Grad\lambda &=& -\Grad p(\varphi) + \varphi\Grad\Lap\varphi
			&\text{ in $Q$}, \\[1ex]
			\nabla\cdot\uel &=& c_-c_+ \bigprt{\nabla\cdot \bigprt{\mj(\varphi)\Grad} - \mr(\varphi)} \bigprt{\mu+\frac{c_-}{c_+}\lambda}
			&\text{ in $Q$}, \\[1ex]
			\partial_t \varphi + \nabla\cdot (\varphi\uel) &=& c_+^2 \bigprt{\nabla\cdot \bigprt{\mj(\varphi)\Grad} - \mr(\varphi)} \bigprt{\mu+\frac{c_-}{c_+}\lambda}
			&\text{ in $Q$}, \\[1ex]
			\mu &=& F'(\varphi) - \Lap\varphi 
			&\text{ in $Q$}, \\[1ex]
			p(\varphi) &=& \varphi F'(\varphi) - F(\varphi) 
			&\text{ in $Q$},
		\end{array}
	\end{equation*}
	where the relations
	\begin{equation*}
		\arraycolsep=2pt
		\begin{array}{rcl}
			\Str(\varphi, \D\uel) &=& 2\nu(\varphi) \D\uel + \eta(\varphi) \nabla\cdot\uel \mathbf I,\\[1ex]
			\rho(\varphi) &=& b_+ + b_- \varphi
		\end{array}
	\end{equation*}
	are satisfied, and constants related to the specific mass densities are abbreviated by
	\begin{equation*}
		\arraycolsep=2pt
		\begin{array}{rcl}
			b_\pm = \frac{\tilde\rho_+ \pm \tilde\rho_-}{2}, \qquad
			c_\pm = \frac{1}{\tilde\rho_+} \pm \frac{1}{\tilde\rho_-}, \qquad
			\text{where } \tilde\rho_\pm>0, \quad \tilde\rho_+ \neq \tilde\rho_-.
		\end{array}
	\end{equation*}
	We point out that $\uel$ describes the \textit{mass averaged} velocity of the mixture. In this context,~$\mu$~denotes the chemical potential, $p$ represents the non-monotone pressure, and $\lambda$~serves as a Lagrange multiplier ensuring the incompressibility of the constituents. For the remaining quantities, we refer to the introduction.
	
	In order to reformulate the momentum equation, we begin by computing
	\begin{align*}
		-\Grad p(\varphi) + \varphi\Grad\Lap\varphi
		= - \varphi F''(\varphi)\Grad\varphi + \varphi\Grad\Lap\varphi
		= -\varphi \Grad\bigprt{F'(\varphi) - \Lap\varphi}
		= -\varphi\Grad\mu.
	\end{align*}
	From this point onward, we consider $\lambda$ as the pressure and work with a modified chemical potential $\omega \coloneqq \mu+\frac{c_-}{c_+}\lambda$. A simple computation reveals the identity
	\begin{align*}
		\prt{1 - \alpha\varphi} \Grad\lambda
		= \beta\rho \Grad\lambda,
		\qquad\text{where } \alpha = \tfrac{c_-}{c_+}, \quad \beta = \tfrac{2}{\tilde\rho_+ + \tilde\rho_-}.
	\end{align*}
	Since the divergence equation together with the phase field equation are equivalent to the mass balance and the phase field equation, we choose the latter combination and finally end up with the model
	\begin{equation}
		\label{LT:reform:final_model}
		%\left.
		\arraycolsep=2pt
		\begin{array}{rclll}
			\rho \prt{\delt\uel + (\uel\cdot\nabla)\uel} - \nabla\cdot \Str(\varphi, \D\uel) + \beta\rho\Grad\lambda
			&=& -\varphi\Grad \omega
			&\text{ in $Q$}, \\[1ex]
			\partial_t \rho + \nabla\cdot(\rho\uel) 
			&=& 0
			&\text{ in $Q$}, \\[1ex]
			\partial_t \varphi + \nabla\cdot (\varphi\uel)
			&=& c_+^2 \bigprt{\nabla\cdot \bigprt{\mj(\varphi)\Grad\omega} - \mr(\varphi) \omega}
			&\text{ in $Q$}, \\[1ex]
			\omega
			&=& F'(\varphi) - \Lap\varphi + \alpha\lambda
			&\text{ in $Q$}.
		\end{array}%\right\}
	\end{equation}
	To this system, we add the boundary conditions
	\begin{equation}
		\label{LT:reform:bdry_cond}
		\arraycolsep=2pt
		\begin{array}{rclll}
			\n\cdot\uel \vert_{\partial\Omega}&=&0 &\text{ on $S$}, \\[1ex]
			(\Str\n)_{\btau} &=&-\gamma\uel_{\btau} \vert_{\partial\Omega} &\text{ on $S$}, \\[1ex]
			\partial_\n\varphi \vert_{\partial\Omega} = \partial_\n \omega \vert_{\partial\Omega} &=&0 &\text{ on $S$}.
		\end{array}
	\end{equation}
	The Navier-slip boundary condition for $\uel$ describes that only the normal part of the velocity vanishes on the boundary, whereas the tangential part of $\uel$ is related to the tangential part of the normal stresses via some friction parameter $\gamma>0$. For $\varphi$ and $\omega$, we work with a~$\pi/2$-contact-angle condition and a no-flux boundary condition, respectively.
	
	Furthermore, we note that the total energy of the system is given by
	\begin{align*}
		E_\tot\prt{\uel,\varphi}
		= E_\kin(\uel) + E_\free(\varphi) 
		= \int_\Omega \rho \frac{\abs \uel^2}{2} \dx + \int_\Omega F(\varphi) + \frac{\abs{\Grad\varphi}^2}{2} \dx.
	\end{align*}
	Multiplying the first equation of \eqref{LT:reform:final_model} by $\uel$, the second by $\beta\lambda$, the third by $\omega$, and the fourth by $-\delt\varphi$, then integrating over $\Omega$ and adding, reveals in view of the boundary conditions~\eqref{LT:reform:bdry_cond} that every sufficiently smooth solution to the model satisfies
	\begin{align*}
		\ddt E_{\tot}\bigprt{\uel(t),\varphi(t)} 
		&= - \int_\Omega \Str(\varphi, \D\uel) : \D\uel + \mj(\varphi) c_+^2 \abs{\Grad \omega}^2  
		+ \mr(\varphi) c_+^2 \omega^2 \dx \\
		&\quad- \int_{\partial\Omega} \gamma \abs{\uel_{\btau}}^2 \dsix.
	\end{align*}
	Section \ref{sec:LT_exws} is devoted to proving existence of weak solutions to the quasi-stationary variant of this system.

	%%%%%%%%%%%%%%%%%%%%%%%%%%%%%%%%%%%%%%%%%%%%%%%%%%%%%%%%%%%%%%%%%
	%%%%%%%%%%%%%%%%%%%%%%%%%%%%%%%%%%%%%%%%%%%%%%%%%%%%%%%%%%%%%%%%%
	%%%%%%%%%%%%%%%%%%%%%%%%%%%%%%%%%%%%%%%%%%%%%%%%%%%%%%%%%%%%%%%%%
	\section{Existence of Weak Solutions to the Quasi-Stationary Model with Mass Averaged Velocity} \label{sec:LT_exws}
	%%%%%%%%%%%%%%%%%%%%%%%%%%%%%%%%%%%%%%%%%%%%%%%%%%%%%%%%%%%%%%%%%
	%%%%%%%%%%%%%%%%%%%%%%%%%%%%%%%%%%%%%%%%%%%%%%%%%%%%%%%%%%%%%%%%%
	%%%%%%%%%%%%%%%%%%%%%%%%%%%%%%%%%%%%%%%%%%%%%%%%%%%%%%%%%%%%%%%%%
	
	In this section, we prove existence of weak solutions to the system \eqref{eq:LT}, which is the quasi-stationary version of the reformulated model from Section \ref{sec:LT_reformulation}. To this end, we impose the assumptions \eqref{ass:domain}--\eqref{ass:friction} for the entire section. Since the kinetic energy is neglected, the total energy of the system only consists of the free energy
	\begin{align*}
		E_\free(\varphi) 
		= \int_\Omega F(\varphi) + \frac{\abs{\Grad\varphi}^2}{2} \dx.
	\end{align*}
	Furthermore, with $F_0(s)\coloneqq F(s)+\frac\kappa2s^2$ being the convex part of $F$, we denote the convex part of the free energy by
	\begin{align*}
		E_m(\varphi) =
		\begin{cases}
			\int_\Omega F_0(\varphi) + \frac12 \abs{\Grad \varphi}^2 \dx &\text{for } \varphi\in\dom E_m, \\
			\infty &\text{else},
		\end{cases}
	\end{align*}
	where $\dom E_m=\{\varphi\in \rmH^1_{(m)}(\Omega) : \abs{\varphi}\leq1 \text{ a.e.~in }\Omega\}$. Since for solutions, the spatial mean of $\varphi$ is conserved, cf.~Remark~\ref{LT:exws:rmk:mean-free_eq_for_chem_pot} below, we can prescribe the mean $\mean{\varphi}=m\in(-1,1)$ here without loss of generality. This enables us to use results related to the subgradient of $E_m$ for functions with prescribed mean value, see Proposition~\ref{prelim:prop:subgrad_mean}.

	\subsection{Definition of Weak Solutions} \label{LT:exws:subsec:def_ws}
	
	In the following, we give our precise definition of weak solutions to system~\eqref{eq:LT}.
	
	\begin{definition}
		\label{LT:exws:def:weak_sol}
		Let $\varphi_0\in \rmH^1(\Omega)$ with $\abs{\varphi_0}\leq1$ a.e.~in $\Omega$. Under the assumptions~\eqref{ass:domain}--\eqref{ass:friction} a quadruple $(\uel,\lambda,\omega,\varphi)$ with the properties
		\begin{alignat*}{2}
			\uel &\in \rmL^2(0,\infty;\rmH^1_\n(\Omega)),\\
			\lambda &\in \rmL^2_\uloc([0,\infty);\rmL^r(\Omega)),\\
			\omega &\in \rmL^2(0,\infty;\rmH^1(\Omega)),\\
			\varphi &\in \BCw([0,\infty);\rmH^1(\Omega)) \cap \rmL^2_\uloc([0,\infty);\rmW^{2,r}(\Omega)),\\
			F'(\varphi) &\in \rmL^2_\uloc([0,\infty);\rmL^r(\Omega)),
		\end{alignat*}
		where $r\in(\frac{2d}{d+2},\frac{d}{d-1})$ is arbitrary, is called a \emph{weak solution of \eqref{eq:LT}} if the following equations are satisfied:
		\begin{subequations}
		\begin{align}
			&\int_{Q} \rho^{-1} \Str(\varphi, \D\uel) : \Grad\bpsi + \Str \bigprt{\varphi,\D\uel} \Grad \rho^{-1} \cdot \bpsi - \beta \lambda \nabla\cdot\bpsi \dtx \nonumber\\
			&\quad+ \int_{S} \gamma \rho^{-1} \uel_{\btau} \cdot \bpsi_{\btau} \dtsix 
			= \int_{Q} - \rho^{-1} \varphi\Grad \omega \cdot \bpsi \dtx
			\label{LT:exws:eq:weak_sol_a}
		\end{align}
		for all $\bpsi\in \rmC_0^\infty((0,\infty);\rmW^{1,r'}_\n(\Omega))$, 
		\begin{align}
			\int_{Q} -\rho \delt\xi + \nabla\cdot(\rho\uel) \xi \dtx = 0 \label{LT:exws:eq:weak_sol_b}
		\end{align}
		for all $\xi\in \rmC_0^\infty((0,\infty);\rmH^1(\Omega))$
		as well as
		\begin{align}
			\int_{Q} - \varphi\delt\zeta - \varphi\uel\cdot\Grad\zeta \dtx 
			= \int_{Q} -\mj(\varphi) c_+^2 \Grad \omega \cdot\Grad\zeta - \mr(\varphi) c_+^2 \omega \zeta \dtx
			\label{LT:exws:eq:weak_sol_c}
		\end{align}
		for all $\zeta\in \rmC_0^\infty((0,\infty);\rmH^1(\Omega))$ and
		\begin{align}
			\omega = F'(\varphi)-\Lap\varphi + \alpha\lambda \quad\text{a.e.~in $Q$}  
			\label{LT:exws:eq:weak_sol_d}.
		\end{align}
		\end{subequations}
		Additionally, we require the boundary and initial conditions
		\begin{alignat*}{2}
			\partial_\n\varphi \vert_{\partial\Omega} &=0 &&\quad\text{a.e.~in $S$}, \\ 
			\varphi \vert_{t=0} &= \varphi_0 &&\quad\text{a.e.~in $\Omega$}
		\end{alignat*}
		to hold, as well as the energy inequality
		\begin{align}
			&E_\free\bigprt{\varphi(t)} 
			+ \int_{Q_{(s,t)}} \Str(\varphi, \D\uel) : \D\uel + \mj(\varphi) c_+^2 \abs{\Grad \omega}^2  
			+ \mr(\varphi) c_+^2 \omega^2 \dtaux \nonumber\\
			&\quad + \int_{S_{(s,t)}} \gamma \abs{\uel_{\btau}}^2 \dtausix
			\leq E_\free\bigprt{\varphi(s)}
			\label{LT:exws:eq:weak_sol_energy_ineq}
		\end{align}
		for almost all $s\in [0,\infty)$ including $s=0$, and for all $t\in[s,\infty)$. 
	\end{definition}

	\begin{remark}
		Assume that the friction coefficient $\gamma$ is positive, and that both the viscosity coefficient $\nu$ and the mobility coefficients $\mj$, $\mr$ are bounded from below by some positive constant. For stationary states $(\uel^*,\lambda^*,\omega^*,\varphi^*)$ of system~\eqref{eq:LT}, the energy inequality \eqref{LT:exws:eq:weak_sol_energy_ineq} reveals that $\D\uel^*=0$, $\omega^*=0$ and $\uel^*_{\btau}=0$. Therefore, we derive $\uel^*=0$ by Korn's inequality, cf.~(3.5.11) in \cite{Abels2007}. We conclude that $(\lambda^*,\varphi^*)$ is a stationary state of the Cahn--Hilliard equation, i.e., a solution of 
		\begin{equation*}
			\arraycolsep=2pt
			\begin{array}{rclll}
				F'(\varphi^*) - \Lap\varphi^* &=&-  \alpha\lambda^*&\text{ in } \Omega, \\[0.5ex]
				\partial_\n \varphi^* \vert_{\partial\Omega} &=&0 &\text{ on } \partial\Omega.
			\end{array}
		\end{equation*}
		In contrast, if $\gamma=0$ and the domain $\Omega$ has a symmetry axis, Korn's inequality is not applicable, and consequently, stationary states can also take the form of rigid motions with respect to the symmetry axis.
	\end{remark}

	\subsection{Implicit Time Discretization} \label{LT:exws:subsec:time_discr}
	
	This section consists of the construction of approximate weak solutions to system~\eqref{eq:LT} with the help of an implicit time discretization.
	
	\begin{definition}
		With $N\in\N$ being fixed arbitrarily, let $h=\frac{1}{N}$ denote the time step size. For $k\in\N_0$, consider some given function $\varphi_k\in \rmH^1_{(m)}(\Omega)$, where $m\in(-1,1)$, which satisfies~$F'(\varphi_k)\in \rmL^2(\Omega)$, and define $\rho_k\coloneqq b_+ + b_- \varphi_k$. Then, we determine a quadruple~$(\uel,\lambda_0,\omega,\varphi) \in \rmH^1_\n(\Omega) \times \rmL^2_{(0)}(\Omega) \times \rmH^2_\Neum(\Omega) \times \mathcal D(\partial E_m)$ as a solution of the following time-discrete system at time step $k+1$:
		\begin{subequations}
		\begin{align}
			\int_\Omega \Str(\varphi_k, \D\uel) : \Grad\bpsi - \beta  \lambda_0 \nabla\cdot(\rho_k\bpsi) \dx + \int_{\partial\Omega} \gamma \uel_{\btau} \cdot \bpsi_{\btau} \dsix 
			= \int_\Omega -\varphi_k\Grad \omega \cdot \bpsi \dx
			\label{LT:exws:eq:time-discr_sol_a}
		\end{align}
		for all $\bpsi\in \rmH^1_\n(\Omega)$, as well as
		\begin{align}
			\frac{\rho-\rho_k}{h} + \nabla\cdot(\rho_k\uel) + h\lambda_0 = 0 \quad\text{a.e.~in $\Omega$}, \label{LT:exws:eq:time-discr_sol_b}
		\end{align}
		where $\rho= b_+ + b_- \varphi$, 
		\begin{align}
			\int_\Omega \frac{\varphi-\varphi_k}{h}\zeta - \varphi_k\uel\cdot\Grad\zeta \dx 
			= \int_\Omega -\mj(\varphi_k) c_+^2 \Grad \omega \cdot\Grad\zeta - \mr(\varphi_k) c_+^2 \omega \zeta \dx
			\label{LT:exws:eq:time-discr_sol_c}
		\end{align}
		for all $\zeta\in \rmH^1(\Omega)$, and
		\begin{align}
			P_0(\omega) + \kappa\frac{\varphi+\varphi_k}{2} - \kappa m = P_0 \bigprt{F_0'(\varphi)} - \Lap\varphi + \alpha\lambda_0 \quad\text{a.e.~in $\Omega$}.  \label{LT:exws:eq:time-discr_sol_d1} \owntag{1}
		\end{align}
		\end{subequations}
	\end{definition}

	\begin{remark}
		\label{LT:exws:rmk:mean-free_eq_for_chem_pot}
		Concerning the time-discrete system above, we point out the following.
		\begin{enumerate}[label=(\roman*)]
			\item In \eqref{LT:exws:eq:time-discr_sol_b}, a damping term $h\lambda_0$ is added in order to obtain an \textit{a priori} estimate for the pressure $\lambda_0$ in $\rmL^2(\Omega)$ on the time-discrete level, cf. the energy inequality~\eqref{LT:exws:eq:discr_energy_est} below. Although the term degenerates in the limit passage $h\to0$ we are able to keep $\lambda_0$ under control in a suitable space.
			\item For given $\varphi_k\in \rmH^1_{(m)}(\Omega)$, let $(\uel,\lambda_0,\omega,\varphi)$ be a solution to the time-discrete system~\eqref{LT:exws:eq:time-discr_sol_a}--\eqref{LT:exws:eq:time-discr_sol_d1}.
			Here, requiring $\varphi_k$ and $\varphi$ to have equal mean values is not a restriction for the following reason. Integrating \eqref{LT:exws:eq:time-discr_sol_b} over $\Omega$ shows, in light of the divergence theorem and the boundary condition for $\uel$, that $\mean{\rho}=\mean{\rho_k}$. Since~$\varphi$ and~$\varphi_k$ have the same affine linear dependence on $\rho$ and $\rho_k$, respectively, we conclude~$\mean{\varphi}=\mean{\varphi_k}$.
			\item Testing \eqref{LT:exws:eq:time-discr_sol_c} with $\zeta=1$, the identity $\mean{\varphi}=\mean{\varphi_k}$ further reveals that the spatial mean of $\omega$ is given by
			\begin{align*}
				\mean{\omega} = \biggprt{\int_\Omega \mr(\varphi_k) \dx}^{-1} \int_\Omega \mr(\varphi_k)P_0(\omega) \dx.
			\end{align*}
			\item Provided that \eqref{LT:exws:eq:time-discr_sol_c} is satisfied, the time-discrete equation for the chemical potential~\eqref{LT:exws:eq:time-discr_sol_d2} below and its spatial mean-free part~\eqref{LT:exws:eq:time-discr_sol_d1} above are equivalent in the following sense. 
			For $k\in\N_0$, let $\varphi_k\in \rmH^1_{(m)}(\Omega)$ be given. 
			If a quadruple~$(\uel,\lambda_0,\omega,\varphi) \in \rmH^1_\n(\Omega) \times \rmL^2_{(0)}(\Omega)\times \rmH^2_\Neum(\Omega) \times \mathcal D(\partial E_m)$ with $\mean{\varphi}=\mean{\varphi_k}$ is a solution to \eqref{LT:exws:eq:time-discr_sol_c}--\eqref{LT:exws:eq:time-discr_sol_d1}, setting~$\lambda\coloneqq\lambda_0 + \bar{\lambda}$, where $\bar{\lambda}$ is defined by the identity 
			\begin{align*}
				-\alpha\bar\lambda = \mean{F_0'(\varphi)} - \bar{\omega}- \kappa m, 
				\qquad \bar\omega\coloneqq \biggprt{\int_\Omega \mr(\varphi_k) \dx}^{-1} \int_\Omega \mr(\varphi_k)P_0(\omega) \dx,
			\end{align*}
			implies that $(\uel,\lambda,\omega,\varphi)$ solves \eqref{LT:exws:eq:time-discr_sol_c}  and
			\addtocounter{equation}{-1}
			\begin{subequations}
			\addtocounter{equation}{3}
			\begin{align}
				\omega + \kappa\frac{\varphi+\varphi_k}{2} = F_0'(\varphi)-\Lap\varphi + \alpha\lambda \quad\text{a.e.~in $\Omega$}.  
				\label{LT:exws:eq:time-discr_sol_d2} \owntag{2}
			\end{align}
			\end{subequations}
			On the other hand, if $(\uel,\lambda,\omega,\varphi) \in \rmH^1_\n(\Omega) \times  \rmL^2(\Omega)\times \rmH^2_\Neum(\Omega) \times \mathcal D(\partial E_m)$ solves \eqref{LT:exws:eq:time-discr_sol_c}--\eqref{LT:exws:eq:time-discr_sol_d2}, then $(\uel,\lambda_0,\omega,\varphi)$ with $\lambda_0\coloneqq \lambda-\mean{\lambda}$ is a solution to \eqref{LT:exws:eq:time-discr_sol_c}--\eqref{LT:exws:eq:time-discr_sol_d1}.
		\end{enumerate}
	\end{remark}

	\begin{lemma}
		\label{LT:exws:lemma:estimates}
		Let $\varphi_k \in \rmH^2_{(m)}(\Omega)$, where $m\in(-1,1)$, be given with $\abs{\varphi_k}\leq1$ a.e.~in $\Omega$. We assume a quadruple $(\uel,\lambda_0,\omega,\varphi) \in \rmH^1_\n(\Omega) \times \rmL^r_{(0)}(\Omega) \times \rmH^2_\Neum(\Omega) \times \mathcal D(\partial E_m)$ to solve \eqref{LT:exws:eq:time-discr_sol_c}--\eqref{LT:exws:eq:time-discr_sol_d1}. Then there exists a constant $C=C(m,r)>0$ such that
		\begin{align*}
			&\norm{\varphi}_{\rmW^{2,r}(\Omega)} + \norm{F_0'(\varphi)}_{\rmL^r(\Omega)} \\
			&\leq C \bigprt{ \norm{\lambda_0}_{\rmL^r(\Omega)} + \norm{\Grad \omega}_{\rmL^2(\Omega)} + \norm{\varphi}_{\rmH^1(\Omega)} + \norm{\varphi_k}_{\rmH^1(\Omega)} + 1 } \bigprt{\norm{\varphi}_{\rmH^1(\Omega)} + 1}.
		\end{align*}
	\end{lemma}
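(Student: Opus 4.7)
The plan is to rewrite the pointwise equation~\eqref{LT:exws:eq:time-discr_sol_d1} so that its right-hand side becomes the subgradient $\partial E_m(\varphi)$ and then invoke the regularity bound~\eqref{eq:subgrad_with:estimate_W2r} of Proposition~\ref{prelim:prop:subgrad_mean}. Using identity~\eqref{eq:subgrad_with:subgradient}, \eqref{LT:exws:eq:time-discr_sol_d1} reads
\begin{align*}
\partial E_m(\varphi) = P_0(\omega) + \kappa\frac{\varphi+\varphi_k}{2} - \kappa m - \alpha\lambda_0 \quad\text{a.e.~in }\Omega.
\end{align*}
Since the admissible range $r\in(\tfrac{2d}{d+2},\tfrac{d}{d-1})$ satisfies $r<2$, the embedding $\rmL^2(\Omega)\hookrightarrow\rmL^r(\Omega)$ combined with Poincar\'e's inequality applied to the mean-free part of $\omega\in\rmH^1(\Omega)$ immediately produces
\begin{align*}
\norm{\partial E_m(\varphi)}_{\rmL^r(\Omega)}
\leq C\bigprt{\norm{\lambda_0}_{\rmL^r(\Omega)} + \norm{\Grad\omega}_{\rmL^2(\Omega)} + \norm{\varphi}_{\rmH^1(\Omega)} + \norm{\varphi_k}_{\rmH^1(\Omega)} + 1},
\end{align*}
where the affine linear and the constant terms have been absorbed using $\abs{\varphi},\abs{\varphi_k}\leq 1$.

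The main technical step, and the one I expect to be the main obstacle, is the control of $\abs{\mean{F_0'(\varphi)}}$, which enters on the right-hand side of~\eqref{eq:subgrad_with:estimate_W2r}. To this end, I would test the identity $\partial E_m(\varphi) = -\Lap\varphi + P_0(F_0'(\varphi))$ with $\varphi-m$. Exploiting the Neumann condition $\partial_\n\varphi\vert_{\partial\Omega}=0$ and $\mean{\varphi-m}=0$ yields
\begin{align*}
\int_\Omega F_0'(\varphi)(\varphi-m) \dx = \int_\Omega \partial E_m(\varphi)(\varphi-m) \dx - \norm{\Grad\varphi}_{\rmL^2(\Omega)}^2.
\end{align*}
The crucial ingredient is the standard convexity--singularity estimate
\begin{align*}
\int_\Omega F_0'(\varphi)(\varphi-m) \dx \geq c_0 \norm{F_0'(\varphi)}_{\rmL^1(\Omega)} - C_0,
\end{align*}
with $c_0,C_0>0$ depending on $m\in(-1,1)$, which exploits the blow-up $\lim_{s\to\pm1}F_0'(s)=\pm\infty$ from~\eqref{ass:pot}: splitting $\Omega$ into the interior set $\{\abs{\varphi}\leq 1-\delta\}$, where $F_0'$ is uniformly bounded, and the boundary layers $\{\pm\varphi\geq 1-\delta\}$, where $F_0'(\varphi)$ and $\varphi-m$ share the same sign and $\abs{\varphi-m}$ stays bounded below, provided $\delta=\delta(m)>0$ is small enough. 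An argument of this kind is available in~\cite{AbelsWilke2007}.

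Combining the identity with the lower bound and applying H\"older with exponents $r,r'$, together with the Sobolev embedding $\rmH^1(\Omega)\hookrightarrow \rmL^{r'}(\Omega)$ (which is valid in both $d=2$ and $d=3$ for the prescribed $r$-range), I would obtain
\begin{align*}
\bigabs{\mean{F_0'(\varphi)}} \leq \tfrac{1}{\abs{\Omega}}\norm{F_0'(\varphi)}_{\rmL^1(\Omega)}
\leq C\bigprt{\norm{\partial E_m(\varphi)}_{\rmL^r(\Omega)}\bigprt{\norm{\varphi}_{\rmH^1(\Omega)}+1} + 1}.
\end{align*}
Plugging this together with the above bound for $\norm{\partial E_m(\varphi)}_{\rmL^r(\Omega)}$ and the trivial estimate $\norm{\varphi}_{\rmL^2(\Omega)}\leq \abs{\Omega}^{1/2}$ into~\eqref{eq:subgrad_with:estimate_W2r} and collecting factors, with the leftover additive constant absorbed by $\norm{\varphi}_{\rmH^1(\Omega)}+1\geq 1$, delivers the claimed product estimate. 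Everything outside the convexity--singularity step amounts to routine manipulations based on Proposition~\ref{prelim:prop:subgrad_mean}.
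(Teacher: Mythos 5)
Your proposal is correct and follows essentially the same route as the paper: the key convexity--singularity lower bound $\int_\Omega F_0'(\varphi)(\varphi-m)\dx \geq c_0\norm{F_0'(\varphi)}_{\rmL^1(\Omega)}-C_0$, testing the chemical-potential equation with $\varphi-m$ (you test the subgradient identity and substitute the $\rmL^r$ bound on $\partial E_m(\varphi)$, the paper expands $F_0'(\varphi)$ directly from \eqref{LT:exws:eq:time-discr_sol_d2} --- the same computation reorganized), Hölder with exponents $r,r'$ plus $\rmH^1(\Omega)\hookrightarrow\rmL^{r'}(\Omega)$, and finally estimate~\eqref{eq:subgrad_with:estimate_W2r}. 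No gaps.
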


	\begin{proof}
		We start the proof by noting that, since $\mean\varphi \in (-1,1)$ and $\lim_{s\to\pm1} F_0'(s) = \pm\infty$, there exists some $\varepsilon>0$ such that the following conditions are satisfied:
		\begin{gather*}
			\mean\varphi \in (-1+\varepsilon,1-\varepsilon),\\
			F_0'(\varphi) \geq 0 \text{ for } \varphi\in \big[1-\tfrac\varepsilon2, 1 \big] 
			\quad\text{and}\quad
			F_0'(\varphi) \leq 0 \text{ for } \varphi\in \big[-1,-1+\tfrac\varepsilon2\big].
		\end{gather*}
		For this particular choice of $\varepsilon$, it is possible to show that the inequality
		\begin{align}
			\label{AGG:exws:eq:est_F0'}
			F_0'(\varphi) \prt{\varphi-\mean\varphi}
			\geq C_1 \abs{F_0'(\varphi)} - C_2
		\end{align}
		is valid for all $\varphi\in[-1,1]$, with constants $C_{1/2}=C_{1/2}(\varepsilon)>0$. 
		
		Recalling that~\eqref{LT:exws:eq:time-discr_sol_d1} and \eqref{LT:exws:eq:time-discr_sol_d2} are equivalent, see Remark~\ref{LT:exws:rmk:mean-free_eq_for_chem_pot}, we test the latter equation with $\varphi-m$. In view of estimate \eqref{AGG:exws:eq:est_F0'}, Poincaré's inequality, and the bound~$\abs{\varphi}\leq1$ a.e.~in $\Omega$ this leads to
		\begin{align*}
			&\Bigabs{ \int_\Omega F_0'(\varphi) \dx }
			\leq \int_\Omega \abs{F_0'(\varphi)} \dx
			\leq C \int_\Omega F_0'(\varphi) \prt{\varphi-m} \dx + C \\
			&= C \int_\Omega \Lap\varphi \prt{\varphi-m} + \omega \prt{\varphi-m} + \kappa\frac{\varphi+\varphi_k}{2} \prt{\varphi-m} - \alpha \lambda (\varphi-m) \dx + C \\
			&= C \int_\Omega -\abs{\Grad\varphi}^2 + \omega \prt{\varphi-m} 
			+  \frac\kappa2\prt{\varphi-m}^2 
			+  \frac\kappa2\prt{\varphi_k-m}\prt{\varphi-m}
			- \alpha \lambda_0 (\varphi-m) \dx + C \\
			&\leq C \bigprt{ \norm{\Grad \omega}_{\rmL^2(\Omega)} \norm{\Grad\varphi}_{\rmL^2(\Omega)} + \norm{\Grad\varphi}_{\rmL^2(\Omega)}^2 
				+ \norm{\Grad\varphi_k}_{\rmL^2(\Omega)}\norm{\Grad\varphi}_{\rmL^2(\Omega)} \\
			&\quad + \norm{\lambda_0}_{\rmL^r(\Omega)} \norm{\varphi}_{\rmL^{r'}(\Omega)} +1 }\\
			&\leq C \bigprt{ \norm{\lambda_0}_{\rmL^r(\Omega)} + \norm{\Grad \omega}_{\rmL^2(\Omega)} + \norm{\varphi}_{\rmH^1(\Omega)} + \norm{\varphi_k}_{\rmH^1(\Omega)} + 1 } \bigprt{\norm{\varphi}_{\rmH^1(\Omega)} + 1}
		\end{align*}
		due to the embedding $\rmH^1(\Omega) \hookrightarrow \rmL^{r'}(\Omega)$ since $r>\frac{2d}{d+2}$.
		Moreover, we combine \eqref{eq:subgrad_with:estimate_W2r} with the above estimate which, invoking \eqref{LT:exws:eq:time-discr_sol_d1}, entails
		\begin{align*}
			&\norm{\varphi}_{\rmW^{2,r}(\Omega)} + \norm{F_0'(\varphi)}_{\rmL^r(\Omega)} \\
			&\leq C \bigprt{\norm{\partial E_m(\varphi)}_{\rmL^r(\Omega)} 
				+ \bigabs{\mean{F_0'(\varphi)}} 
				+ \norm{\varphi}_{\rmL^2(\Omega)} +1 } \\
			&= C \biggprt{\Bignorm{P_0(\omega) + \kappa\frac{\varphi+\varphi_k}{2} - \kappa m - \alpha\lambda_0 }_{\rmL^r(\Omega)} 
				+ \bigabs{\mean{F_0'(\varphi)}} 
				+ \norm{\varphi}_{\rmL^2(\Omega)} +1 } \\
			&\leq C \bigprt{ \norm{\lambda_0}_{\rmL^r(\Omega)} + \norm{\Grad \omega}_{\rmL^2(\Omega)} + \norm{\varphi}_{\rmH^1(\Omega)} + \norm{\varphi_k}_{\rmH^1(\Omega)} + 1 } \bigprt{\norm{\varphi}_{\rmH^1(\Omega)} + 1}
		\end{align*}
		since $r< \frac{d}{d-1}$, and thereby completes the proof.
	\end{proof}

	\begin{lemma}
		\label{LT:exws:lemma:discr_energy_est}
		Let $\varphi_k\in \rmH^2_{(m)}\Omega)$ with  $m\in(-1,1)$ be given, and set $\rho_k\coloneqq b_+ + b_- \varphi_k$. Then, a solution $(\uel,\lambda_0,\omega,\varphi) \in \rmH^1_\n(\Omega) \times \rmL^2_{(0)}(\Omega)\times \rmH^2_\Neum(\Omega) \times  \mathcal D(\partial E_m) $ to the time-discrete system~\eqref{LT:exws:eq:time-discr_sol_a}--\eqref{LT:exws:eq:time-discr_sol_d1} satisfies the energy inequality
		\begin{align}
			\label{LT:exws:eq:discr_energy_est}
			&E_\free(\varphi)
			+ \int_\Omega \frac{\abs{\Grad\varphi-\Grad\varphi_k}^2}{2} \dx \nonumber\\
			&\quad+ h\int_\Omega \Str(\varphi_k,\D\uel) : \D\uel + \mj(\varphi_k) c_+^2 \abs{\Grad \omega}^2 + \mr(\varphi_k) c_+^2 \omega^2 + h\beta\lambda_0^2 \dx \nonumber\\
			&\quad+ h \int_{\partial\Omega} \gamma \abs{\uel_{\btau}}^2 \dsix 
			\leq E_\free(\varphi_k).
		\end{align}
	\end{lemma}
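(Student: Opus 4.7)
The strategy mirrors that of Lemma~\ref{AGG:exws:lemma:discr_energy_est}: test each of the four equations of the time-discrete system with a carefully chosen function, sum the resulting identities, exploit the convexity of $F_0$ together with the algebraic identity $\mathbf{a}\cdot(\mathbf{a}-\mathbf{b}) = \tfrac12\abs{\mathbf{a}}^2 - \tfrac12\abs{\mathbf{b}}^2 + \tfrac12\abs{\mathbf{a}-\mathbf{b}}^2$, and verify that the mixed coupling terms cancel. Concretely, I would test \eqref{LT:exws:eq:time-discr_sol_a} with $\bpsi=\uel$, multiply \eqref{LT:exws:eq:time-discr_sol_b} by $\beta\lambda_0$ and integrate, test \eqref{LT:exws:eq:time-discr_sol_c} with $\zeta=\omega$, and test \eqref{LT:exws:eq:time-discr_sol_d1} with $\zeta=\tfrac{\varphi-\varphi_k}{h}$, which is admissible since both $\varphi$ and $\varphi_k$ have mean $m$, so $\tfrac{\varphi-\varphi_k}{h}\in \rmL^2_{(0)}(\Omega)$.

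In the first identity, symmetry of $\Str$ gives $\Str:\Grad\uel = \Str:\D\uel$, and the boundary contribution reads $\gamma\abs{\uel_{\btau}}^2$. The second identity produces the damping term $\beta h\lambda_0^2$ together with the coupling terms $\int_\Omega \beta\lambda_0 \nabla\cdot(\rho_k\uel)\dx$ and $\int_\Omega \beta\lambda_0\tfrac{\rho-\rho_k}{h}\dx$. The coupling term $\int_\Omega\beta\lambda_0\nabla\cdot(\rho_k\uel)\dx$ cancels exactly with the corresponding term in the tested momentum equation. Testing \eqref{LT:exws:eq:time-discr_sol_c} with $\omega$ yields the dissipation $\int_\Omega \mj(\varphi_k)c_+^2\abs{\Grad\omega}^2 + \mr(\varphi_k)c_+^2\omega^2\dx$ as well as the convection term $\int_\Omega \varphi_k\uel\cdot\Grad\omega\dx$, which cancels with the right-hand side of the tested momentum equation. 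Finally, in testing \eqref{LT:exws:eq:time-discr_sol_d1}, the projection $P_0$ can be dropped since the test function is mean-free, integration by parts of the Laplacian term produces $\tfrac{1}{h}$ times $\tfrac12\abs{\Grad\varphi}^2-\tfrac12\abs{\Grad\varphi_k}^2+\tfrac12\abs{\Grad\varphi-\Grad\varphi_k}^2$ via the algebraic identity, and convexity of $F_0$ yields $F_0'(\varphi)(\varphi-\varphi_k)\geq F_0(\varphi)-F_0(\varphi_k)$. The $\kappa$-terms then recombine with $F_0$ to reproduce $F(\varphi)-F(\varphi_k)$.

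The point where the argument is genuinely specific to the mass-averaged model is matching the two appearances of $\lambda_0$: the $\alpha\lambda_0$ term in the chemical potential equation produces $\int_\Omega \alpha\lambda_0\tfrac{\varphi-\varphi_k}{h}\dx$ after testing with $\tfrac{\varphi-\varphi_k}{h}$, while multiplying \eqref{LT:exws:eq:time-discr_sol_b} by $\beta\lambda_0$ produces $\int_\Omega \beta\lambda_0\tfrac{\rho-\rho_k}{h}\dx$. Using $\rho-\rho_k = b_-(\varphi-\varphi_k)$ together with the algebraic identity $\beta b_- = -\alpha$ (which follows directly from the definitions of $\alpha,\beta$ and $b_\pm$ in terms of $\tilde\rho_\pm$), these two contributions cancel exactly. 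This cancellation, which is the analogue at the discrete level of the thermodynamic consistency of the model, is the main technical point; the rest of the calculation is the standard telescoping.

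After multiplying the resulting inequality by $h$, the gradient, kinetic-dissipation, chemical-dissipation, boundary-friction, and damping terms all assemble on one side, while the $F_0$ and $\abs{\Grad\varphi}^2/2$ contributions telescope into $E_\free(\varphi)-E_\free(\varphi_k)$ on the other. This yields \eqref{LT:exws:eq:discr_energy_est}.
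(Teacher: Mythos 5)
Your proposal is correct and follows essentially the same route as the paper: test \eqref{LT:exws:eq:time-discr_sol_a} with $\uel$, \eqref{LT:exws:eq:time-discr_sol_b} with $\beta\lambda_0$, \eqref{LT:exws:eq:time-discr_sol_c} with $\omega$, and \eqref{LT:exws:eq:time-discr_sol_d1} with $\tfrac{\varphi-\varphi_k}{h}$, then sum, use convexity of $F_0$ and the quadratic identity, and exploit the cancellation $\beta b_-=-\alpha$ between the pressure terms. The paper performs exactly this computation (writing $\beta\lambda_0\tfrac{\rho-\rho_k}{h}=-\alpha\lambda_0\tfrac{\varphi-\varphi_k}{h}$ directly), so you have correctly identified the one model-specific cancellation as well as the standard telescoping.
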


	\begin{proof}
		We perform the same testing procedure on the time-discrete system as in the proof of Lemma~\ref{AGG:exws:lemma:discr_energy_est}. First, we choose $\uel$ as a test function in \eqref{LT:exws:eq:time-discr_sol_a} to obtain
		\begin{align*}
			0
			= \int_\Omega \Str(\varphi_k,\D\uel) : \D\uel - \beta\lambda_0 \nabla\cdot \prt{\rho_k\uel} + \varphi_k\Grad \omega\cdot\uel \dx 
			+ \int_{\partial\Omega} \gamma \abs{\uel_{\btau}}^2 \dsix.
		\end{align*}
		Testing \eqref{LT:exws:eq:time-discr_sol_b} with $\beta \lambda_0$ leads to
		\begin{align*}
			0
			= \int_\Omega -\alpha \lambda_0 \frac{\varphi-\varphi_k}{h} + \beta \lambda_0 \nabla\cdot(\rho_k\uel) + h \beta \lambda_0^2 \dx
		\end{align*}
		and \eqref{LT:exws:eq:time-discr_sol_c} with $\omega$ gives
		\begin{align*}
			0
			= \int_\Omega \frac{\varphi-\varphi_k}{h}\omega - \varphi_k\uel\cdot\Grad \omega + \mj(\varphi_k) c_+^2 \abs{\Grad \omega}^2 + \mr(\varphi_k) c_+^2 \omega^2  \dx.
		\end{align*}
		Finally, we test \eqref{LT:exws:eq:time-discr_sol_d1} with $\frac{\varphi-\varphi_k}{h}$ which results in
		\begin{align*}
			0
			&= \int_\Omega \frac1h \biggprt{ \frac{\abs{\Grad\varphi}^2}{2} - \frac{\abs{\Grad\varphi_k}^2}{2} + \frac{\abs{\Grad\varphi-\Grad\varphi_k}^2}{2} } + \bigprt{ P_0(F_0'(\varphi)) + \mean{F_0'(\varphi)} }\frac{\varphi-\varphi_k}{h} \dx \\
			&\quad + \int_\Omega - \omega\frac{\varphi-\varphi_k}{h} - \kappa \frac{\varphi^2-\varphi_k^2}{2h} + \alpha \lambda_0 \frac{\varphi-\varphi_k}{h} \dx,
		\end{align*}
		where we note that the term $\int_\Omega - \mean{F_0'(\varphi)}  \frac{\varphi-\varphi_k}{h} - \mean{\omega} \frac{\varphi-\varphi_k}{h} + \kappa m \frac{\varphi-\varphi_k}{h} \dx$ vanishes due to $\varphi$ and $\varphi_k$ having equal mean values. Summing these identities proves the time-discrete energy inequality~\eqref{LT:exws:eq:discr_energy_est}, cf.~the proof of Lemma~\ref{AGG:exws:lemma:discr_energy_est} for details.
	\end{proof}

	\begin{lemma}
		\label{LT:exws:lemma:ex_time-discr}
		Let $\varphi_k\in \rmH^2_{(m)}(\Omega)$ be given with $m\in(-1,1)$, and set $\rho_k\coloneqq b_+ + b_- \varphi_k$. Then, there exists a solution $(\uel,\lambda_0, \omega,\varphi) \in \rmH^1_\n(\Omega) \times  \rmL^2_{(0)}(\Omega) \times \rmH^2_\Neum(\Omega) \times  \mathcal D(\partial E_m)$ to the time-discrete system \eqref{LT:exws:eq:time-discr_sol_a}--\eqref{LT:exws:eq:time-discr_sol_d1}.
	\end{lemma}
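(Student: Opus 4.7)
The plan is to follow the Leray--Schauder strategy of Lemma~\ref{AGG:exws:lemma:ex_time-discr}, adapted to the quasi-stationary, slip-boundary setting. I would introduce
\[
X \coloneqq \rmH^1_\n(\Omega) \times \rmL^2_{(0)}(\Omega) \times \rmH^2_\Neum(\Omega) \times \mathcal{D}(\partial E_m)
\]
together with a suitable distributional target space $Y$, and recast \eqref{LT:exws:eq:time-discr_sol_a}--\eqref{LT:exws:eq:time-discr_sol_d1} as an equation $\mathcal{L}_k(\z) = \mathcal{F}_k(\z)$ for $\z = (\uel,\lambda_0,\omega,\varphi)\in X$. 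The operator $\mathcal{L}_k$ is built to carry the principal linear parts: the Stokes-type form associated with \eqref{LT:exws:eq:time-discr_sol_a}; the damped constraint $\nabla\cdot(\rho_k\uel) + h\lambda_0$; the diffusive-absorption operator $-\nabla\cdot(\mj(\varphi_k) c_+^2\Grad\omega) + \mr(\varphi_k) c_+^2 \omega + \int_\Omega \omega \, \mathrm d y$ (the last term is added to kill the mean-value kernel); and $\varphi + \partial E_m(\varphi)$ in the fourth slot. All $\varphi$-couplings, the shifts $\rho_k/h$, $\varphi_k/h$, and the subgradient residual $\varphi + P_0(\omega) + \kappa\frac{\varphi+\varphi_k}{2} - \kappa m - \alpha\lambda_0$ are pushed into $\mathcal{F}_k$.

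Invertibility of $\mathcal{L}_k$ then splits into a Lax--Milgram problem on $\rmH^1_\n(\Omega) \times \rmL^2_{(0)}(\Omega) \times \rmH^1(\Omega)$ for the first three slots, plus the maximal-monotone inversion of $I + \partial E_m$ via Proposition~\ref{prelim:prop:subgrad_mean}. Coercivity of the associated bilinear form rests on three ingredients: Korn's inequality on $\rmH^1_\n(\Omega)$ combined with the boundary-friction contribution $\gamma > 0$ (cf.\ (3.5.11) in \cite{Abels2007}) controls $\|\uel\|_{\rmH^1}$; the damping term in \eqref{LT:exws:eq:time-discr_sol_b} produces an $h\|\lambda_0\|_{\rmL^2}^2$ contribution; and the lower bounds on $\mj,\mr$ together with the inserted mean-value term yield $\|\omega\|_{\rmH^1}$. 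An elliptic-regularity bootstrap using $\varphi_k \in \rmH^2(\Omega)$---first into $\rmW^{2,3/2}(\Omega)$, then into $\rmH^2(\Omega)$---upgrades $\omega$ to $\rmH^2_\Neum(\Omega)$, exactly as in the AGG case. Continuity and compactness of $\mathcal{K}_k \coloneqq \mathcal{F}_k \circ \mathcal{L}_k^{-1}$ on an appropriate $\tilde Y \hookrightarrow Y$ follow by choosing $\tilde X$ with $X \hookrightarrow \tilde X$ and verifying, via H\"older estimates on the products $\rho_k\uel$, $\varphi_k\uel$, and $\varphi_k\Grad\omega$, that $\mathcal{F}_k : \tilde X \to \tilde Y$ is continuous and bounded on bounded sets; because the model is quasi-stationary, no quadratic-in-velocity terms arise, so this step is simpler than in the AGG proof.

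The main obstacle is the uniform a priori bound required by Leray--Schauder. Given $\vartheta \in [0,1]$ and $\z \in X$ with $\mathcal{L}_k(\z) = \vartheta \mathcal{F}_k(\z)$, I would test exactly as in the proof of Lemma~\ref{LT:exws:lemma:discr_energy_est}---\eqref{LT:exws:eq:time-discr_sol_a} with $\uel$, the mass balance with $\beta\lambda_0$, the phase-field equation with $\omega$, and \eqref{LT:exws:eq:time-discr_sol_d1} with $(\varphi-\varphi_k)/h$---and absorb the $F_0$-term by convexity. This yields $\vartheta$-independent bounds (depending on $h$ and $\varphi_k$) on $\|\uel\|_{\rmH^1}$, $\|\lambda_0\|_{\rmL^2}$, $\|\Grad\omega\|_{\rmL^2}$, $\|\uel_\btau\|_{\rmL^2(\partial\Omega)}$, and $\|\varphi\|_{\rmH^1}$; the mean value of $\omega$ is recovered by testing \eqref{LT:exws:eq:time-discr_sol_c} with $\zeta \equiv 1$ and using the lower bound on $\mr$. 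Plugging these into Lemma~\ref{LT:exws:lemma:estimates} lifts $\varphi$ to $\rmW^{2,r}(\Omega)$, and the $\omega$-regularity bootstrap above closes the bound $\|\z\|_{\tilde X}\le C_k$. The delicate point is that the $\rmL^2$-control of $\lambda_0$ comes entirely from the damping, which degenerates as $h \to 0$; this is harmless at the time-discrete level with $h$ fixed, but it explains why Lemma~\ref{LT:exws:lemma:estimates} has been formulated as an $\rmL^r$-estimate with $r < 2$, anticipating the uniform-in-$h$ limit passage performed later.
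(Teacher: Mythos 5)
Your overall strategy coincides with the paper's: the same splitting into $\mathcal{L}_k$ and $\mathcal{F}_k$, Lax--Milgram for the first three components with Korn's inequality in the slip setting, inversion of $I+\partial E_m$ by maximal monotonicity, the two-step elliptic bootstrap for $\omega$, and the $\vartheta$-independent energy estimate for the Leray--Schauder condition. However, two details need correcting. The added mean-value term $\int_\Omega\omega\dy$ in the third slot of $\mathcal{L}_k$ is both unnecessary and, as stated, inconsistent. It is unnecessary because, unlike in the model with volume averaged velocity, the phase field equation here carries the absorption term $\mr(\varphi_k)c_+^2\omega$ rather than $\mr(\varphi_k)(\mu+\alpha\lambda)$, so testing with $\omega$ already produces $\mr(\varphi_k)c_+^2\omega^2\geq K_*c_+^2\omega^2$ and hence full $\rmH^1$-coercivity in $\omega$; the paper's $\mathcal{L}_k$ contains no such term. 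It is inconsistent because you push ``all'' remaining terms into $\mathcal{F}_k$ without adding $\int_\Omega\omega\dy$ there as well; then $\mathcal{L}_k(\z)=\mathcal{F}_k(\z)$ is no longer equivalent to \eqref{LT:exws:eq:time-discr_sol_c}, since $\mean{\omega}\neq 0$ in general (cf.~Remark~\ref{LT:exws:rmk:mean-free_eq_for_chem_pot}). Either drop the term, or compensate it in $\mathcal{F}_k$ exactly as in the proof of Lemma~\ref{AGG:exws:lemma:ex_time-discr}.

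Second, in the Leray--Schauder estimate you propose to control $\varphi$ via Lemma~\ref{LT:exws:lemma:estimates}, which only yields a bound in $\rmW^{2,r}(\Omega)$ with $r<\frac{d}{d-1}<2$. That is too weak: the compactness of $\mathcal{L}_k^{-1}$ rests on $(I+\partial E_m)^{-1}\colon\rmL^2(\Omega)\to\rmH^{2-s}(\Omega)$ with $s\in(0,\frac14)$, so the a priori bound for the fourth component must be taken in $\rmH^{2-s}(\Omega)$, and $\rmW^{2,r}(\Omega)$ does not embed there for $d=3$ and $r<\frac32$. Since $h$ is fixed at this stage, $\lambda_0$ is already bounded in $\rmL^2(\Omega)$ by the damping term, so \eqref{LT:exws:eq:time-discr_sol_LS_d1} bounds $\partial E_m(\varphi)$ in $\rmL^2(\Omega)$ and estimate \eqref{eq:subgrad_with:estimate_H2} then gives the full $\rmH^2$-bound on $\varphi$; this is the route the paper takes. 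The $\rmL^r$-theory of Lemma~\ref{LT:exws:lemma:estimates} is only needed later for the uniform-in-$h$ limit passage, as you correctly observe at the end of your proposal.
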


	\begin{proof}
		As in the proof of Lemma~\ref{AGG:exws:lemma:ex_time-discr}, we aim at applying the Leray--Schauder principle. To this end, setting
		\begin{align*}
			X &\coloneqq \rmH^1_\n(\Omega) \times \rmL^2_{(0)}(\Omega) \times \rmH^2_\Neum(\Omega) \times \mathcal{D}(\partial E_m), \\[1ex]
			Y &\coloneqq \rmH^{-1}_\n(\Omega) \times \rmL^2_{(0)}(\Omega) \times \rmL^2(\Omega) \times \rmL^2(\Omega),
		\end{align*}
		we seek operators $\mathcal{L}_k,\mathcal{F}_k \colon X\to Y$ satisfying the equivalence that $\z = (\uel,\lambda_0, \omega,\varphi) \in X$ solves the system \eqref{LT:exws:eq:time-discr_sol_a}--\eqref{LT:exws:eq:time-discr_sol_d1} if and only if $\mathcal{L}_k(\z) - \mathcal{F}_k(\z) = 0$. Therefore, we introduce
		\begin{align*}
			\langle L_k(\uel,\lambda_0, \omega), \bpsi \rangle 
			&\coloneqq \int_\Omega \Str(\varphi_k, \D\uel) : \Grad\bpsi - \beta \lambda_0 \nabla\cdot(\rho_k\bpsi) + \varphi_k\Grad \omega \cdot \bpsi \dx 
			+ \int_{\partial\Omega} \gamma \uel_{\btau} \cdot \bpsi_{\btau} \dsix
		\end{align*}
		for all $\bpsi\in \rmH^1_\n(\Omega)$, and subsequently define, for $\z = (\uel,\lambda_0, \omega,\varphi) \in X$, the operators
		\begin{align*}
			\mathcal{L}_k(\z)=
			\begin{pmatrix}
				L_k(\uel,\lambda_0, \omega) \\[1ex]
				\beta \nabla\cdot(\rho_k\uel) + \beta h\lambda_0 \\[1ex]
				\nabla\cdot (\varphi_k\uel) - c_+^2 \bigprt{ \nabla\cdot \prt{\mj(\varphi_k) \Grad\omega} - \mr(\varphi_k)\omega } \\[1ex]
				\varphi + \partial E_m(\varphi)
			\end{pmatrix}
		\end{align*}
		and 
		\begin{align*}
			\mathcal{F}_k(\z)=
			\begin{pmatrix}
				0 \\[1ex]
				- \beta \frac{\rho-\rho_k}{h} \\[1ex]
				-\frac{\varphi-\varphi_k}{h} \\[1ex]
				\varphi + P_0(\omega) + \kappa\frac{\varphi+\varphi_k}{2} - \kappa m - \alpha\lambda_0 
			\end{pmatrix}.
		\end{align*}
		In order to prove that the operator $\mathcal{L}_k\colon X\to Y$ is invertible we assemble the first three lines of $\mathcal{L}_k$ into a bilinear form $\mathcal{A}_k$ on $\rmH^1_\n(\Omega)^d \times \rmL^2_{(0)}(\Omega) \times \rmH^1(\Omega)$ given by
		\begin{align*}
			\mathcal{A}_k \bigprt{(\uel,\lambda_0, \omega), (\bpsi,\xi,\zeta)} 
			&\coloneqq \langle L_k(\uel,\lambda_0, \omega), \bpsi \rangle 
			+ \int_\Omega \xi \beta \nabla\cdot(\rho_k\uel) + \xi \beta h\lambda_0 \dx \\
			&\quad+ \int_\Omega \zeta \nabla\cdot (\varphi_k\uel) + c_+^2\mj(\varphi_k) \Grad \omega \cdot \Grad\zeta + c_+^2\mr(\varphi_k) \omega\zeta \dx,
		\end{align*}
		which is bounded and coercive as
		\begin{align*}
			\mathcal{A}_k \bigprt{(\uel,\lambda_0, \omega), (\uel,\lambda_0, \omega)} 
			&\geq C \Bigprt{ \norm{\D\uel}_{\rmL^2(\Omega)}^2 + h\norm{\lambda_0}_{\rmL^2(\Omega)}^2 + \norm{ \omega}_{\rmH^1(\Omega)}^2 + \norm{\uel_{\btau}}_{\rmL^2(\partial\Omega)}^2 } \\
			&\geq C \Bigprt{ \norm{\uel}_{\rmH^1(\Omega)}^2 + h\norm{\lambda_0}_{\rmL^2(\Omega)}^2 + \norm{ \omega}_{\rmH^1(\Omega)}^2}
		\end{align*}
		with a constant $C>0$ in view of Korn's inequality (cf.~(3.5.11) in \cite{Abels2007}) and the positive lower bound on the coefficients $\nu$, $\eta$, $\mj$, $\mr$ and $\gamma$. From the Lax--Milgram theorem, we therefore derive that for any right-hand side $(\mathbf g_1,g_2,g_3)\in \rmH^{-1}_\n(\Omega) \times \rmL^2_{(0)}(\Omega) \times \bigprt{\rmH^1(\Omega)}'$, there exists a unique triple $(\uel,\lambda_0, \omega) \in \rmH^1_\n(\Omega) \times \rmL^2_{(0)}(\Omega) \times \rmH^1(\Omega)$ satisfying
		\begin{align*}
			\mathcal{A}_k \bigprt{(\uel,\lambda_0, \omega), (\bpsi,\xi,\zeta)} 
			= \langle \mathbf g_1, \bpsi \rangle_{\rmH^1_\n(\Omega)} +  \langle g_2, \xi \rangle_{\rmL^2_{(0)}(\Omega)} +  \langle g_3, \zeta \rangle_{\rmH^1(\Omega)}
		\end{align*}
		for all $(\bpsi,\xi,\zeta)\in \rmH^1_\n(\Omega) \times \rmL^2_{(0)}(\Omega) \times \rmH^1(\Omega)$.
		In particular, for given $g_3\in \rmL^2(\Omega)$, $\omega$ is a weak solution to the elliptic equation
		\begin{align*}
			-c_+^2\bigprt{ \nabla\cdot\prt{\mj(\varphi_k)\Grad\omega} - \mr(\varphi_k) \omega }
			= g_3- \nabla\cdot(\varphi_k\uel)
		\end{align*}
		with homogeneous Neumann boundary conditions whose right-hand side is in $\rmL^2(\Omega)$. In light of standard elliptic regularity theory, we conclude that $\omega$ belongs to $\rmH^2_\Neum(\Omega)$ with
		\begin{align}
			\label{LT:exws:eq:omega_H2}
			\norm{\omega}_{\rmH^2(\Omega)}
			\leq C_k \bigprt{ \norm{g_3}_{\rmL^2(\Omega)} + \norm{\nabla\cdot(\varphi_k\uel)}_{\rmL^2(\Omega)}}.
		\end{align}
		Analogously to the proof of Lemma~\ref{AGG:exws:lemma:ex_time-discr}, concerning the fourth line of $\mathcal{L}_k$, we establish that
		\begin{align*}
			\prt{I+\partial E_m}^{-1} \colon \rmL^2(\Omega)\to \rmH^{2-s}(\Omega), \qquad s\in\bigprt{0,\tfrac14},
		\end{align*}
		is a compact operator, where we employ the results~\eqref{eq:subgrad_with:subgradient} and \eqref{eq:subgrad_with:estimate_H2} for the subgradient $\partial E_m$.
		
		Altogether, we have verified the existence of an inverse $\mathcal{L}_k^{-1}\colon Y\to X$. Next, in order to introduce Banach spaces $\tilde X$, $\tilde Y$ such that we have continuous embeddings $X\hookrightarrow\tilde X$, $\tilde Y\hookrightarrow Y$ and such that the operator $\mathcal{L}_k^{-1}\colon \tilde Y\to \tilde X$ is compact, we define
		\begin{align*}
			\tilde X &\coloneqq \rmH^1_\n(\Omega) \times  \rmL^2_{(0)}(\Omega) \times \rmH^2_\Neum(\Omega) \times \rmH^{2-s}(\Omega), \qquad s\in\bigprt{0,\tfrac14}, \\[1ex]
			\tilde Y &\coloneqq \rmH^1_\n(\Omega) \times \rmH^1_{(0)}(\Omega) \times \rmH^1(\Omega) \times \rmL^2(\Omega).
		\end{align*}
		These definitions ensure the desired continuity of the embeddings $X\hookrightarrow\tilde X$, $\tilde Y\hookrightarrow Y$. Moreover, we note that $\rmH^1_\n(\Omega) \times \rmH^1_{(0)}(\Omega) \times \rmH^1(\Omega)$, comprising the first three factors of the product space $\tilde Y$, embeds compactly into the Banach space $\rmH^{-1}_\n(\Omega) \times \rmL^2_{(0)}(\Omega) \times \rmL^2(\Omega)$ consisting of the corresponding first three components of $Y$. Recalling now the compactness of~$\prt{I+\partial E_m}^{-1} \colon \rmL^2(\Omega)\to \rmH^{2-s}(\Omega)$, $s\in(0,\tfrac14)$ the composed operator $\mathcal{L}_k^{-1}\colon \tilde Y\to \tilde X$ is compact.
		
		Furthermore, studying the operator $\mathcal{F}_k$ with the spaces introduced above, it immediately becomes evident that $\mathcal{F}_k \colon\tilde X\to\tilde Y$ is continuous and maps bounded sets into bounded sets.
		
		Finally, to obtain a fixed point of the compact operator $\mathcal{K}_k \coloneqq \mathcal{F}_k \circ \mathcal{L}_k^{-1} \colon \tilde Y\to\tilde Y$ by the Leray--Schauder principle, we verify condition \eqref{AGG:exws:eq:Leray-Schauder_cond} following a similar procedure as in the proof of Lemma~\ref{AGG:exws:lemma:ex_time-discr}. To this end, let $\f\in\tilde Y$ and $0\leq\vartheta\leq1$ satisfying $\f=\vartheta\mathcal{K}_k(\f)$ be given. With $\z=\mathcal{L}_k^{-1}(\f)$, this equation can be rewritten as $\mathcal{L}_k(\z) - \vartheta\mathcal{F}_k(\z) = 0$, which is equivalent to the following weak formulation:
		\begin{subequations}
		\begin{align}
			\int_\Omega \Str(\varphi_k, \D\uel) : \Grad\bpsi - \beta  \lambda_0 \nabla\cdot(\rho_k\bpsi) \dx + \int_{\partial\Omega} \gamma \uel_{\btau} \cdot \bpsi_{\btau} \dsix 
			= \int_\Omega -\varphi_k\Grad \omega \cdot \bpsi \dx
			\label{LT:exws:eq:time-discr_sol_LS_a}
		\end{align}
		for all $\bpsi\in \rmH^1_\n(\Omega)$, as well as
		\begin{align}
			\vartheta\beta\frac{\rho-\rho_k}{h} + \beta\nabla\cdot(\rho_k\uel) + \beta h\lambda_0 = 0 \quad\text{a.e.~in $\Omega$}, \label{LT:exws:eq:time-discr_sol_LS_b}
		\end{align}
		where $\rho= b_+ + b_- \varphi$, 
		\begin{align}
			\int_\Omega \vartheta\frac{\varphi-\varphi_k}{h}\zeta - \varphi_k\uel\cdot\Grad\zeta \dx 
			= \int_\Omega -\mj(\varphi_k) c_+^2 \Grad \omega \cdot\Grad\zeta - \mr(\varphi_k) c_+^2 \omega \zeta \dx
			\label{LT:exws:eq:time-discr_sol_LS_c}
		\end{align}
		for all $\zeta\in \rmH^1(\Omega)$, and
			\begin{align}
				\vartheta\varphi +  \vartheta P_0(\omega) + \vartheta\kappa\frac{\varphi+\varphi_k}{2} - \vartheta\kappa m - \vartheta\alpha\lambda_0 
				= \varphi + P_0 \bigprt{F_0'(\varphi)} - \Lap\varphi
				\quad\text{a.e.~in $\Omega$}.  
				\label{LT:exws:eq:time-discr_sol_LS_d1} \owntag{1}
			\end{align}
		\end{subequations}
		Now, as for proving the time-discrete energy inequality in Lemma~\ref{LT:exws:lemma:discr_energy_est}, testing \eqref{LT:exws:eq:time-discr_sol_LS_a} with~$\uel$, \eqref{LT:exws:eq:time-discr_sol_LS_b} with $\lambda_0$, \eqref{LT:exws:eq:time-discr_sol_LS_c} with $\omega$ and \eqref{LT:exws:eq:time-discr_sol_LS_d1} with $\frac{\varphi-\varphi_k}{h}$, followed by a summation of the resulting equations, gives
		\begin{align*}
			0
			&\geq \int_\Omega \Str(\varphi_k,\D\uel) : \D\uel +  \mj(\varphi_k) c_+^2 \abs{\Grad \omega}^2 + \mr(\varphi_k) c_+^2 \omega^2 + h\beta\lambda_0^2 \dx
			+ \int_{\partial\Omega} \gamma \abs{\uel_{\btau}}^2 \dsix \\
			&\quad + \int_\Omega
			\frac1h \biggprt{\frac{\abs{\Grad\varphi}^2}{2} - \frac{\abs{\Grad\varphi_k}^2}{2} +  \frac{\abs{\Grad\varphi-\Grad\varphi_k}^2}{2}} 
			+ \frac1h \bigprt{F_0(\varphi)-F_0(\varphi_k)} - \vartheta \kappa \frac{\varphi^2-\varphi_k^2}{2h} \dx \\
			&\quad+ \int_\Omega (1-\vartheta) \frac1h\biggprt{ \frac{\varphi^2}{2} - \frac{\varphi_k^2}{2} + \frac{\prt{\varphi-\varphi_k}^2}{2} } \dx.
		\end{align*}
		The same argumentation as in the proof of Lemma~\ref{AGG:exws:lemma:ex_time-discr} leads to
		\begin{align*}
			& \int_\Omega h\Str(\varphi_k,\D\uel) : \D\uel +  h\mj(\varphi_k) c_+^2 \abs{\Grad \omega}^2 + h\mr(\varphi_k) c_+^2 \omega^2 + h^2\beta\lambda_0^2 \dx
			+ \int_{\partial\Omega} h\gamma \abs{\uel_{\btau}}^2 \dsix \\
			&\quad + \int_\Omega \frac{\abs{\Grad\varphi}^2}{2} \dx 
			\leq \int_\Omega \frac{\varphi_k^2}{2} + \frac{\abs{\Grad\varphi_k}^2}{2}  + F_0(\varphi_k) + \abs{\kappa}\frac{\varphi_k^2}{2} \dx
			+ \biggabs{ \int_\Omega F(\varphi) \dx }
			\leq C_k,
		\end{align*}
		from where, incorporating $\abs{\varphi}\leq1$ a.e.~in $\Omega$, Korn's inequality for $\uel\in \rmH^1_\n(\Omega)$ (in the version~(3.5.11) in \cite{Abels2007}) and the positive lower bound on $\nu,\mj,\mr$, we derive
		\begin{align*}
			\norm{\uel}_{\rmH^1(\Omega)} + \norm{\lambda_0}_{\rmL^2(\Omega)} + \norm{\omega}_{\rmH^1(\Omega)} + \norm{\varphi}_{\rmH^1(\Omega)}
			\leq C_k.
		\end{align*}
		Using \eqref{LT:exws:eq:time-discr_sol_LS_d1}, this estimate further reveals $\norm{\partial E_m(\varphi)}_{\rmL^2(\Omega)} \leq C_k$. Consequently, we are in a position to control the $\rmH^2(\Omega)$-norms of both $\omega$ in light of \eqref{LT:exws:eq:omega_H2}, where now $g_3=-\vartheta\frac{\varphi-\varphi_k}{h}$, and of $\varphi$ due to \eqref{eq:subgrad_with:estimate_H2}. In summary, $\z=(\uel,\lambda_0, \omega,\varphi) \in \tilde X = \rmH^1_\n(\Omega) \times  \rmL^2_{(0)}(\Omega) \times \rmH^2_\Neum(\Omega) \times \rmH^{2-s}(\Omega)$, where $s\in(0,\frac14)$ is arbitrary, satisfies
		\begin{align*}
			\norm{\z}_{\tilde X}
			\leq C_k.
		\end{align*}
		Concluding $\norm{\f}_{\tilde Y}\leq C_k$ from the fact that $\mathcal{F}_k \colon \tilde X\to\tilde Y$ maps bounded sets into bounded sets finishes the proof of the Leray--Schauder condition. Hence, we have established that for each $k\in\N$, there exists a solution $\z=(\uel,\lambda_0, \omega,\varphi) \in X$ to the time-discrete system~\eqref{LT:exws:eq:time-discr_sol_a}--\eqref{LT:exws:eq:time-discr_sol_d1}.
	\end{proof}

	\subsection{Proof of Theorem \ref{thm:LT}} \label{LT:exws:subsec:proof_main}
	
	In the following, we prove our second main result, Theorem~\ref{thm:LT}, by passing to the limit in the time-discrete solutions from the previous subsection.
	
	\begin{proof}
		For given $N\in\N$, let $(\uel_{k+1},\lambda_{0,k+1}, \omega_{k+1},\varphi_{k+1})$, $k\in\N_0$, be successively chosen as a solution to the time-discrete system~\eqref{LT:exws:eq:time-discr_sol_a}--\eqref{LT:exws:eq:time-discr_sol_d1} with time step size $h=\frac1N$ and with initial value~$\varphi_0^N$, where the original initial datum $\varphi_0$ is approximated by functions $\varphi_0^N \in \rmH^2(\Omega)$ as demonstrated in the proof of Theorem~\ref{thm:AGG}. We also adopt the notation used therein. The existence of such solutions, which additionally satisfy the energy inequality~\eqref{LT:exws:eq:discr_energy_est}, follows from Lemma~\ref{LT:exws:lemma:discr_energy_est} and Lemma~\ref{LT:exws:lemma:ex_time-discr}.
		
		From \eqref{LT:exws:eq:time-discr_sol_a}--\eqref{LT:exws:eq:time-discr_sol_d1}, we deduce the following corresponding time-continuous system, where the equations~\eqref{LT:exws:eq:time-cont_sol_b} and \eqref{LT:exws:eq:time-cont_sol_c} below are obtained using the discrete integration by parts formula \eqref{AGG:exws:eq:discr_int_by_parts}. The system reads
		\begin{subequations}
		\begin{align}
			\label{LT:exws:eq:time-cont_sol_a}
			&\int_Q \bigprt{\rho_h^N}^{-1} \Str \bigprt{\varphi_h^N,\D\uel^N} : \Grad\bpsi + \Str \bigprt{\varphi_h^N,D\uel^N} \Grad \bigprt{\rho_h^N}^{-1} \cdot \bpsi
			- \beta\lambda_0^N \nabla\cdot \bpsi \dtx \nonumber\\
			&\quad+ \int_S \gamma \bigprt{\rho_h^N}^{-1} \uel_{\btau}^N \cdot \bpsi_{\btau} \dtsix
			= - \int_Q \bigprt{\rho_h^N}^{-1} \varphi_h^N\Grad \omega^N \cdot \bpsi \dtx
		\end{align}
		for all $\bpsi\in \rmC_0^\infty((0,\infty); \rmW^{1,r'}_\n(\Omega))$,
		\begin{align}
			\label{LT:exws:eq:time-cont_sol_b}
			\int_Q - \rho^N \partial_{t,h}^+ \xi + \rho_h^N\uel^N \cdot\Grad\xi + h\lambda_0^N \xi \dtx= 0
		\end{align}
		for all $\xi\in \rmC_0^\infty((0,\infty);\rmH^1(\Omega))$, provided that $h$ is sufficiently small, as well as
		\begin{align}
			\label{LT:exws:eq:time-cont_sol_c}
			&\int_Q - \varphi^N \partial_{t,h}^+\zeta - \varphi_h^N\uel^N \cdot\Grad\zeta \dtx \nonumber\\
			&= \int_Q -\mj\bigprt{\varphi_h^N} c_+^2 \Grad \omega^N \cdot\Grad\zeta - \mr\bigprt{\varphi_h^N} c_+^2 \omega^N \zeta  \dtx
		\end{align}
		for all $\zeta\in \rmC_0^\infty((0,\infty); \rmH^1(\Omega))$ if $h$ is small enough, and
		\begin{align}
			\label{LT:exws:eq:time-cont_sol_d1}
			P_0\bigprt{\omega^N} + \kappa\frac{\varphi^N+\varphi_h^N}{2} - \kappa m = P_0\bigprt{F_0'\bigprt{\varphi^N}} - \Lap\varphi^N + \alpha\lambda_0^N
			\quad\text{a.e.~in $(0,\infty)\times\Omega$}. \owntag{1}
		\end{align}
		\end{subequations}
		
		Moreover, analogously to the proof of Theorem~\ref{thm:AGG}, the time-discrete energy inequality~\eqref{LT:exws:eq:discr_energy_est} transforms into
		\begin{align}
			\label{LT:exws:eq:time-cont_E+}
			&E_\free \bigprt{\varphi^N(t)} \nonumber\\
			&\quad+ \int_{Q_{(s,t)}} \Str \bigprt{\varphi_h^N,\D\uel^N} : \D\uel^N + \mj\bigprt{\varphi_h^N} c_+^2 \bigabs{\Grad \omega^N}^2 + \mr\bigprt{\varphi_h^N} c_+^2 \bigprt{\omega^N}^2 
			 + h\beta \bigprt{\lambda_0^N}^2 \dtaux \nonumber\\
			&\quad+ \int_{S_{(s,t)}} \gamma \bigabs{\bigprt{\uel^N}_{\btau}}^2 \dtausix
			\leq E_\free \bigprt{\varphi^N(s)}
		\end{align}
		for all $0\leq s\leq t<\infty$ with $s,t\in h\N_0$. The bounds provided by this energy estimate, together with Korn's inequality for $\uel^N$ (version~(3.5.11) in \cite{Abels2007}) and the conservation property~$\mean{\varphi^N} \equiv m$, imply the following  weak (or weak-*, respectively) convergences as~$N\to\infty$ along non-relabeled subsequences:
		\begin{alignat*}{2}
			\uel^N &\rightharpoonup \uel &&\quad\text{in } \rmL^2(0,\infty;\rmH^1(\Omega)^d), \\
			\varphi^N &\overset{\ast}{\rightharpoonup} \varphi &&\quad\text{in } \rmL^\infty(0,\infty;\rmH^1(\Omega)), \\
			\omega^N &\rightharpoonup  \omega &&\quad\text{in } \rmL^2(0,\infty;\rmH^1(\Omega)).
		\end{alignat*}
	
		However, since the term containing $\lambda_0^N$ in \eqref{LT:exws:eq:time-cont_E+} degenerates as $N\to\infty$ due to its pre-factor $h=\frac1N$ a different approach in order to establish a bound on $\lambda_0^N$ is needed. Therefore, following the strategy from \cite{Abels2009g}, we select $\bpsi(t)\coloneqq \sigma(t)\Grad\phi$ as a test function in \eqref{LT:exws:eq:time-cont_sol_a} where~$\sigma\in \rmC_0^\infty(0,\infty)$ and $\phi\in \rmW^{2,r'}_\Neum(\Omega)$ with $r'$ being the dual exponent of $r\in (\frac{2d}{d+2},\frac{d}{d-1})$. We point out that this is an admissible test function since $\n\cdot\Grad\phi \vert_{\partial\Omega} = \partial_\n\phi \vert_{\partial\Omega} = 0$. This leads to the equation
		\begin{align*}
			&\int_0^\infty \int_\Omega \beta\lambda_0^N \Lap\phi \dx \;\sigma \dt 
			= \int_0^\infty \int_\Omega \bigprt{\rho_h^N}^{-1} \Str \bigprt{\varphi_h^N,\D\uel^N} : \Grad^2\phi \dx \; \sigma \dt \\
			&\quad + \int_0^\infty \int_\Omega \Bigprt{ \Str \bigprt{\varphi_h^N,\D\uel^N} \Grad \bigprt{\rho_h^N}^{-1} 
				+ \bigprt{\rho_h^N}^{-1} \varphi_h^N\Grad \omega^N }
			\cdot \Grad\phi \dx \; \sigma \dt \\
			&\quad + \int_0^\infty \int_{\partial\Omega} \gamma \bigprt{\rho_h^N}^{-1} \uel_{\btau}^N \cdot (\Grad\phi)_{\btau} \dsix \; \sigma \dt
		\end{align*}
		for all $\sigma\in \rmC_0^\infty(0,\infty)$ and $\phi\in \rmW^{2,r'}_\Neum(\Omega)$. Thus, $\beta\lambda_0^N$ is a very weak solution of the Neumann--Laplace equation, see Proposition~\ref{prelim:prop:very_weak_sol}, whose right-hand side $f^N(t)$ may be estimated by
		\begin{align*}
			&\norm{f^N(t)}_{\prt{\rmW^{2,r'}_\Neum(\Omega)}'}
			\leq C \Bigprt{ \bignorm{\bigprt{\rho_h^N}^{-1} \Str \bigprt{\varphi_h^N,\D\uel^N}}_{\rmL^2(\Omega)}  \\
			&\quad+ \bignorm{\Str \bigprt{\varphi_h^N,\D\uel^N} \Grad \bigprt{\rho_h^N}^{-1} 
			+ \bigprt{\rho_h^N}^{-1} \varphi_h^N\Grad \omega^N}_{\rmL^1(\Omega)} 
			+ \bignorm{\gamma \bigprt{\rho_h^N}^{-1} \uel_{\btau}^N}_{\rmL^2(\partial\Omega)} }
		\end{align*}
		for almost all $t\in (0,\infty)$ in view of $\Grad\phi\in \rmW^{1,r'}(\Omega) \hookrightarrow \rmC^0(\overline{\Omega})$. As a consequence, according to estimate~\eqref{eq:Neum-Lap:estimate}, there exists a constant~$C_r>0$ such that
		\begin{align*}
			\norm{\lambda_0^N}_{\rmL^2(0,\infty; \rmL^r(\Omega))}
			\leq C_r \norm{f^N}_{\rmL^2 \bigprt{0,\infty;\prt{\rmW^{2,r'}_\Neum(\Omega)}'}}
			\leq C
		\end{align*}
		due to the uniform bounds derived above, from where we eventually conclude that
		\begin{align*}
			\lambda_0^N \rightharpoonup \lambda_0 \quad\text{in } \rmL^2(0,\infty;\rmL^r(\Omega))
		\end{align*}
		along a subsequence.
		
		Next, in order to pass to the limit in the non-linearities, we need to show strong convergence of $\varphi^N$. By the same reasoning as in the proof of Theorem~\ref{thm:AGG}, we first prove 
		\begin{align}
			\label{LT:exws:eq:strong_conv_phi_L2}
			\varphi^N \to \varphi \quad\text{in } \rmL^2(0,\infty;\rmL^2(\Omega)) \text{ for all } 0<T<\infty
		\end{align}
		as well as $\varphi\in \BCw([0,\infty);\rmH^1(\Omega))$ and $\varphi\vert_{t=0}=\varphi_0$. Furthermore, we study the estimate from Lemma~\ref{LT:exws:lemma:estimates} for the piecewise constant functions to observe
		\begin{align*}
			&\bignorm{\varphi^N}_{\rmL^2(0,T;\rmW^{2,r}(\Omega))}
			+ \bignorm{F_0'\bigprt{\varphi^N}}_{\rmL^2(0,T;\rmL^r(\Omega))} \\
			&\leq C \Bigprt{\bignorm{\lambda_0^N}_{\rmL^2(0,T;\rmL^r(\Omega))} + \bignorm{\Grad \omega^N}_{\rmL^2(0,T;\rmL^2(\Omega))}
			+ \bignorm{\varphi^N}_{\rmL^2(0,T;\rmH^1(\Omega))} + \bignorm{\varphi_h^N}_{\rmL^2(0,T;\rmH^1(\Omega))} + 1 } \\
			&\quad\Bigprt{\bignorm{\varphi^N}_{\rmL^\infty(0,T;\rmH^1(\Omega))} + 1} 
		\end{align*}
		for all $0<T<\infty$. Since the right-hand side is bounded uniformly in $N$, we obtain for a subsequence that $\varphi^N\rightharpoonup\varphi$ in $\rmL^2(0,T;\rmW^{2,r}(\Omega))$ as well as $F_0'(\varphi^N) \rightharpoonup F^*$ in~$\rmL^2(0,T;\rmL^r(\Omega))$ for all $0<T<\infty$ with some limit function $F^* \in \rmL^2_\uloc([0,\infty);\rmL^r(\Omega))$. Finally, in view of the compact embedding~$\rmW^{2,r}(\Omega) \hookrightarrow \rmH^1(\Omega)$, we even establish 
		\begin{align*}
			\varphi^N\to\varphi \quad\text{in } \rmL^2(0,T;\rmH^1(\Omega)) \text{ for all } 0<T<\infty.
		\end{align*}
		By means of the proven convergence results, we are now in a position to pass to the limit in \eqref{LT:exws:eq:time-cont_sol_a}--\eqref{LT:exws:eq:time-cont_sol_c} to establish \eqref{LT:exws:eq:weak_sol_a}--\eqref{LT:exws:eq:weak_sol_c}.
	
		Continuing with the limit passage in equation~\eqref{LT:exws:eq:time-cont_sol_d1}, we first note that $-\Lap\varphi^N \rightharpoonup -\Lap\varphi$ in~$\rmL^2(0,T;\rmL^r(\Omega))$ for all $0<T<\infty$ since $-\Lap\colon \rmW^{2,r}(\Omega)\to \rmL^r(\Omega)$ is a linear bounded operator. Already knowing that all other terms in \eqref{LT:exws:eq:time-cont_sol_d1} converge weakly in $\rmL^2(0,T;\rmL^r(\Omega))$ for all $0<T<\infty$ as well it only remains to verify $F^*=F'(\varphi)$ a.e.~in $(0,\infty)\times\Omega$ in order to pass to the correct limit in $P_0(F_0'(\varphi^N))$. To prove this, we follow an approach in \cite{AbelsBosiaGrasselli2014}. To this end, let $0<T<\infty$ and $l\in\N$ be fixed, and let $0<t<T$ be arbitrary. From the strong convergence \eqref{LT:exws:eq:strong_conv_phi_L2}, it follows $\varphi^N(t) \to \varphi(t)$ a.e.~in $\Omega$ along a subsequence. Additionally, Egorov's theorem implies the existence of a set $\Omega_l\subset\Omega$ with $\abs{\Omega_l} \geq \abs{\Omega}-\frac1{2l}$ on which~$\varphi^N(t)\to\varphi(t)$ uniformly. Defining
		\begin{align*}
			M_{\delta,N} \coloneqq \big\{ x\in\Omega : \bigabs{\varphi^N(t,x)} > 1-\delta \big\}
		\end{align*}
		we see that $\abs{M_{\delta,N}}$ is decreasing in $\delta>0$ for all $N\in\N$.  Moreover, it holds
		\begin{align*}
			c_\delta \coloneqq \inf_{\abs{r}\geq1-\delta} \abs{F'(r)} \to \infty \quad\text{as } \delta\to0
		\end{align*}
		due to the assumption~\eqref{ass:pot}. This convergence and the uniform (with respect to $N$) bound on $F'(\varphi^N(t))$ in $\rmL^2(\Omega)$ entail, in light of Chebyshev's inequality, that
		\begin{align*}
			\abs{M_{\delta,N}} \leq \frac{1}{c_\delta^2} \int_\Omega \bigabs{F'\bigprt{\varphi^N(t)}}^2 \dx 
			\to0 \quad\text{as } \delta\to0 \quad\text{uniformly in }N\in\N.
		\end{align*}
		Hence, there exists $\delta=\delta(l)$ independent of $N$ such that it holds
		\begin{align*}
			\Bigabs{ \big\{ x\in\Omega : \bigabs{\varphi^N(t,x)} > 1-\delta \big\} } \leq \frac{1}{2l} \quad\text{for all } N\in\N.
		\end{align*}
		Moreover, the uniform convergence above yields some $\bar{N}\in\N$ such that $\abs{\varphi^N(t)-\varphi(t)} < \frac{\delta}{4}$~for all~$N\geq\bar{N}$ on $\Omega_l$. With
		\begin{align*}
			\Omega_l' \coloneqq \Omega_l \cap \big\{ x\in\Omega : \bigabs{\varphi^{\bar{N}}(t,x)} \leq 1-\delta \big\}
		\end{align*}
		we derive that $\abs{\Omega_l'} \geq \abs{\Omega} - \frac1l$ as well as $\abs{\varphi^N(t,x)} < 1-\frac{\delta}{2}$ for all $N\geq\bar{N}$ and for all~$x\in\Omega_l'$. The continuity of $F'$ on $(-1,1)$ shows that $F'(\varphi^N(t))\to F'(\varphi(t))$ uniformly on $\Omega_l'$, which results in the convergence $F'(\varphi^N)\to F'(\varphi)$ a.e.~in $(0,\infty)\times\Omega$ since $l$ and $T$ were arbitrary. Eventually, due to the uniqueness of both weak and pointwise limits, we conclude that $F^*$ and $F'(\varphi)$ coincide a.e.~in $(0,\infty)\times\Omega$.
		
		Altogether, it is now possible to perform the limit passage in equation \eqref{LT:exws:eq:time-cont_sol_d1}. Thus, following the same procedure as in Remark~\ref{LT:exws:rmk:mean-free_eq_for_chem_pot} on a time-continuous level, in particular setting~$-\alpha\bar\lambda = \mean{F_0'(\varphi)} - \bar{\omega} - \kappa m$ a.e.~in $(0,\infty)$, where $\bar\omega\coloneqq \prt{\int_\Omega \mr(\varphi) \dx}^{-1} \int_\Omega \mr(\varphi)P_0(\omega) \dx$, we establish the validity of equation~\eqref{LT:exws:eq:weak_sol_d}. 
		
		The remaining proof of the energy inequality \eqref{LT:exws:eq:weak_sol_energy_ineq} follows the same steps as in Subsection~\ref{AGG:exws:subsec:energy_ineq}.
	\end{proof}

	\noindent\textbf{Acknowledgment.}
	The third author was funded by the Deutsche Forschungsgemeinschaft (DFG, German Research Foundation) Research Training Group~2339 ``Interfaces, Complex Structures, and Singular Limits in Continuum Mechanics -- Analysis and Numerics''. The support is gratefully acknowledged.

	\bibliographystyle{abbrv}
	\bibliography{Paper_AGG_LT.bib}
	
\end{document}